\documentclass[11pt]{article}
\usepackage{xcolor}
\usepackage{amsmath}
\usepackage{hyperref}
\usepackage{amssymb}
\usepackage{bbm}
\usepackage{enumerate}   
\usepackage{geometry}
\usepackage{amsfonts}
\usepackage{amsthm} 

\newtheorem{theorem}{Theorem}
\newtheorem{lemma}[theorem]{Lemma}

\newtheorem{proposition}[theorem]{Proposition}
\newtheorem{mydef}[theorem]{Definition}

\newtheorem{remark}[theorem]{Remark}
\newtheorem{assumption}[theorem]{Assumption}

\newcommand{\E}{\mathbb{E}}
\newcommand{\R}{\mathbb{R}}
\newcommand{\PP}{\mathbb{P}}
\newcommand{\Q}{\mathbb{Q}}
\newcommand{\N}{\mathbb{N}}
\newcommand{\1}{\mathbbm{1}}
\newcommand{\tX}{\tilde X}
\newcommand{\tB}{\tilde B}
\newcommand{\dd}{{\rm d}}
\newcommand{\Aa}{\mathcal A}
\newcommand{\Ss}{\mathcal S}
\newcommand{\Kk}{\mathcal K}
\newcommand{\Ff}{\mathcal F}
\newcommand{\Pp}{\mathcal P}
\newcommand{\Uu}{\mathcal U}
\newcommand{\e}{\varepsilon}
\newcommand{\sm}{{s-}}
\newcommand{\vip}{\vskip5pt}

\title{Asymptotics of a two-species particle system associated to the doubly parabolic Keller-Segel equation in the plane}

\author{Nicolas Fournier\footnote{Sorbonne Université and Université Paris Cité, CNRS, 
Laboratoire de Probabilités, Statistique et Modélisation, F-75005 Paris, France.}
\hskip2mm  and Milica Toma\v sevi\'c\footnote{CMAP, CNRS, École polytechnique, Institut Polytechnique de
Paris, 91120 Palaiseau, France.}}

\date{}
\begin{document}

\maketitle

\begin{abstract}
 We consider the two-species particle system introduced by Stevens~\cite{Stevens} related to the doubly parabolic Keller-Segel equation.  It consists of $N$ cells and of a varying number of chemoattractant particles. Cells  diffuse in the plane and follow the (mollified) empirical gradient of concentration of chemoattractant. Chemoattractant particles are produced by  cells at some constant rate, diffuse and  disappear at some constant rate. We show that when the sensitivity of cells to the chemoattractant is small enough, under some rather weak condition on the family of mollifiers, this system  approximates the parabolic-parabolic Keller-Segel equation as $N\to \infty$. We also prove that when $N$ is fixed and when the production rate of chemoattractant particles tends to infinity, this system approximates the (non-Markovian) one-species system introduced in~\cite{Mi-De} and further studied in~\cite{FT1}.
\end{abstract}

\vip
\noindent\textit{Keywords and phrases:} Stochastic particle systems; Singular interaction; 
Mean-field limit; Keller-Segel equation; Chemotaxis.

\noindent\textit{MSC 2020 classification:} 60K35, 60H30, 35K40.

\section{Introduction}

\paragraph{Motivation.}
Chemotaxis is a fundamental biological process that allows cells and organisms to navigate their environment by moving toward favorable conditions and away from harmful ones. Found across a wide range of life forms, from bacteria to human cells, it involves two key steps. First, a chemical spreads {\it via} diffusion, creating a concentration gradient. Second, each cell or organism detects this gradient and adjusts its movement accordingly, either advancing toward higher concentrations (in the case of a chemoattractant) or retreating from them (in the case of a chemorepellent).
This biological mechanism can be formalized mathematically through the Keller–Segel model, which provides a \textit{macroscopic} description of chemotaxis. Rather than tracking individual cells, the model assumes a large-population regime in which the number of cells is sufficiently high to justify a continuum description in terms of a cell density. It captures how this density evolves under the combined effects of  diffusion and directed motion along chemical gradients. Within this framework, cells both diffuse and attract each other in response to the chemical signals they sense or produce. For the original motivation and detailed biological background, we refer to the foundational works of Keller and Segel~\cite{KELLER_SEGEL1, KELLER_SEGEL2, KELLER_SEGEL3}.
 The aim of the present article is to revisit the corresponding \textit{microscopic} description and to bridge the gap between two distinct approaches that have emerged in the probabilistic literature. One is based on a two-species particle system introduced by Stevens~\cite{Stevens}, while the other relies on a single-species non-Markovian system proposed more recently by Talay-Tomašević~\cite{Mi-De} and further analyzed in~\cite{JTT,tomasevic.these,FT1}.

\paragraph{The Keller-Segel equation.} The so-called \textit{minimal model} couples the evolution of the cell density $(\rho_t(x))_{t\geq 0, x \in \R^2}$ and chemoattractant concentration $(c_t(x))_{t\geq 0, x \in \R^2}$ in the following way
\begin{equation}
\label{EDP_KS}
\begin{cases}
&\partial_t \rho_t(x) = \Delta  \rho_t(x)-\chi \nabla \cdot (\rho_t(x)\nabla c_t(x)), 
\hskip1.5cm  t>0,\quad x\in\R^2, \\[4pt]
&\theta \partial_t c_t(x) = \Delta c_t(x)-\lambda c_t(x)+\rho_t(x), \hskip2.15cm t>0, \quad x\in\R^2,
\end{cases}
\end{equation}
with $\rho_0$ and $c_0$ given.  We will assume that $\rho_0$ is a probability measure on $\R^2$ and that
$c_0\in L^1(\R^2)\cap L^\infty(\R^2)$ is nonnegative.
The parameters $\chi>0$, $\theta\geq 0$ and 
$\lambda\geq 0$ respectively stand for the sensitivity of cells to the chemoattractant, the ratio between the diffusion time scales of bacteria and chemoattractant, and the decay rate of the chemoattractant. For a thorough review on this model, see the paper of Horstmann~\cite{Horstmann1}.
A notable feature of the Keller–Segel model is its ability to produce singular behavior in finite time. This phenomenon reflects the tendency of cells to self-organize into sharply concentrated clusters under the influence of chemotactic attraction. In particular, for sufficiently regular initial data, the parabolic–elliptic case (corresponding to $\theta = 0$) displays a critical threshold: At least for $\lambda=0$, when $\chi$ exceeds $8\pi$, solutions undergo finite-time blow-up, whereas for $\chi \leq 8\pi$, solutions remain globally well-defined. This was first rigorously established by Nagai~\cite{nagai} ($\chi<8\pi$, radial case),
Blanchet-Dolbeault-Perthame~\cite{bdp} ($\chi<8\pi$) and Biler-Karch-Lauren\c{c}ot-Nadzieja~\cite{bkln} 
($\chi\leq 8\pi$, radial case). In the parabolic-parabolic case where $\theta>0$, the  global well-posedness still holds for $\chi<8\pi$ and any reasonable initial data, see Calvez-Corrias~\cite{CalCor2008}. However, in the special case of $c_0= 0$ and for any $\chi>0$,
the global well-posedness holds true when $\theta>0$ is large enough, see  Biler-Guerra-Karch~\cite{Biler-Guerra-Karch}, and this was extended to a more general class of initial  concentrations $c_0$ (with a smallness condition depending on $\theta$) by  
Corrias-Escobedo-Matos~\cite{Corrias2014}. 
Concerning explosion, the situation is still largely open, let us mention that radial solutions 
on a disk in $\R^2$ blowing-up for $\chi>8\pi$  have been  exhibited by  Herrero-Velasquez~\cite{HerreroandVelazquez}.  In addition, a criterion for the explosion of radial solutions has been obtained by Mizoguchi~\cite{mizoguchiBlowUp}: The conditions are that $\chi>8\pi$ and  an energy condition on the initial data $(\rho_0,c_0)$. 

\paragraph{Particle approximation in the parabolic-elliptic case.}
In the elliptic case where $\theta=0$, one naturally approximates the Keller-Segel equation by a finite system of Brownian motions interacting through a pairwise attraction in $1/r$. Such a particle system may explode due to the occurrence of many collisions, a phenomenon studied in~\cite{FournierTardy}. It is thus rather challenging to justify this approximation, but this is now rather well-understood in the subcritical case $\chi\leq 8\pi$, see \cite{fournier-jourdain}, Bresch-Jabin-Wang~\cite{BJW}  and Tardy~\cite{T}. However, a true convergence result (not along subsequences) in the whole plane (not in the torus) is still unavailable. See also the work~\cite{ORT} on moderately interacting particle systems. 

\paragraph{A two-species particle approximation in the doubly parabolic case.}
In the seminal paper~\cite{Stevens}, Stevens introduces a two species particle approximation of~\eqref{EDP_KS} for $\theta>0$. Let us describe informally the variant of this system we will study in the present paper, see Section~\ref{secmr} for a rigorous description. Fix an integer $N\geq 2$ and a real $M>0$. We have $N$ cells and a temporally varying number of chemoattractant particles. 

\vip
\noindent $\bullet$ Initially, the $N$ cell positions are i.i.d. and $\rho_0$-distributed, and we have around 
$NM||c_0||_{L^1(\R^2)}$ chemoattractant particles with i.i.d. $||c_0||_{L^1(\R^2)}^{-1}c_0$-distributed positions.
\vip
\noindent $\bullet$ Each chemoattractant particle disappears at instantaneous rate $\theta^{-1}\lambda$.
\vip
\noindent $\bullet$ Each cell produces chemoattractant particles according to a Poisson process with rate $\theta^{-1}M$. When a chemoattractant particle is born, it takes the position of its mother cell.
\vip
\noindent $\bullet$ Each chemoattractant particle moves like a Brownian motion with diffusion coefficient $\sqrt{2 \theta^{-1}}$.
\vip
\noindent $\bullet$ Each cell moves like a Brownian motion with diffusion coefficient $\sqrt 2$ which drifts according to the concentration gradient of chemoattractant particles. Since such particles are finitely many, a mollification $\phi_\e$ is necessary. Namely, the instantaneous drift of the $i$-th cell at time $t$ is given by
$\nabla \phi_\e \ast \nu^{i}_t$ evaluated at the position of the $i$-th cell and $\nu^i_t$ is the empirical concentration at time $t$ of chemoattractant particles, excluding those produced by the $i$-th cell and
normalized by $M(N-1)$.

\vip

We denote by $(X^{i,N,M,\e}_t)_{t\geq 0, i \in I_N}$ the process 
describing the cell positions, where $I_N=\{1,\dots,N\}$.
See Assumption~\ref{mol} for precise conditions on the family of mollifiers $(\phi_\e, \e\in (0,1])$, we mainly assume that $\phi_\e(x)=\e^{-1}\phi(\e^{-\frac12}x)$, for some smooth probability density $\phi$ on $\R^2$.

\paragraph{The result of~\cite{Stevens}.}
 Although Stevens works in any dimension and with a more general chemotaxis PDE, let us specify her results in our context. She considers (a variant of) the particle system described in the previous paragraph and assumes (i) that the production rate of chemoattractant particles 
$M$ does not depend on the number $N$ of cells, 
(ii) that \eqref{EDP_KS} admits a unique solution such that both $(\rho_t)_{t\in[0,T]}$ and $(c_t)_{t\in[0,T]}$ belong to $C_b^{1,3}([0,T]\times \R^2)\cap C^0([0,T], L^2(\R^2))$, 
(iii) a set of conditions on the mollification parameter implying that, with our notation, 
$\lim_N N \e_N^{15+}=\infty$ (see~\cite[Sections~2 and~3]{Stevens}, she takes $\e_N=\hat \alpha_N^{-2}$ 
with $\hat \alpha_N=N^{\hat \alpha/2}$ with $\hat\alpha<\frac1{15}$).
Under these conditions, the mollified empirical measures of cells and of chemoattractant particles converge in probability to $(\rho_t)_{t\in [0,T]}$ and $(c_t)_{t\in [0,T]}$, as $N\to \infty$.
\vip
Condition (i) is not very important, it is likely that the case where $M$ varies with $N$ could be treated by the same method. Although we found no precise reference, condition (ii) seems rather reasonable and allows one to treat any values of $\chi>0$ and $\theta>0$ for which  there exists a unique
%(possibly local in time) 
smooth solution to \eqref{EDP_KS}, up to the maximal existence time of such solution.
\vip
Our goal with respect to~\cite{Stevens} is mainly to weaken the condition (iii) on the mollification parameter. Roughly, we will be able (e.g. in the case $M=1$) to include the case where $\lim_N N \e_N^{3+}=\infty$. 
However, our convergence result is weaker in that it holds only along subsequences and we can only work with global solutions, hence we need to assume that $\chi>0$ is small enough.

\paragraph{A one-species particle approximation in the doubly parabolic case.}
Here we present the alternative microscopic approximation by Talay-Tomašević~\cite{Mi-De}.
See Section~\ref{secmr} and~\cite[Section~1]{FT1} for details.
It does not involve any smoothing, but it only tracks the cells positions. Using Duhamel's formula for the chemoattractant concentration $(c_t)_{t\geq 0}$ with $(\rho_t)_{t\geq 0}$ as a source term, one can interpret the equation for $(\rho_t)_{t\geq 0}$ as a Fokker-Plank equation of the following non-linear process:
\begin{equation}\label{eq:NLSDE}
X_t= X_0 + \sqrt{2} B_t + \chi \int_0^t \nabla b_s^{c_0,\theta,\lambda}(X_s)\dd s 
+ \chi \int_0^t\int_0^s (\nabla K^{\theta,\lambda}_{s-u}\ast \rho_u)(X_s) \dd u\dd s,
\quad \rho_s={\rm Law}(X_s),
\end{equation}
where, for $ (t,x) \in (0,\infty)\times \R^2$,
\begin{equation}\label{gkb}
g_t(x):=\frac{1}{2\pi t} e^{-\frac{|x|^2}{2t}}, \quad 
K_t^{\theta,\lambda}(x):= \frac{1}{\theta} e^{-\frac{\lambda}{\theta}t}g_{2t/\theta}(x),  
\quad b_t^{c_0,\theta,\lambda} (x):= \theta (K^{\theta,\lambda}_t \ast  c_0)(x).
\end{equation}
The process \eqref{eq:NLSDE} represents the position of a  typical cell in a macroscopic population (of infinite size). A natural discretization of~\eqref{eq:NLSDE} is the following particle system: setting $I_N=\{1,\dots, N\}$,
\begin{align}\label{PS}
X_t^{i,N}=& X_0^{i,N} + \sqrt{2} B_t^i + \chi \int_0^t \nabla b_s^{c_0,\theta,\lambda}(s,X_s^{i,N})\dd s \\
&+ \frac{\chi}{N-1} \sum_{j \in I_N,j\neq i}
\int_0^t \int_0^s \nabla K^{\theta,\lambda}_{s-u} ( X^{i,N}_s-X^{j,N}_u)  \dd u \dd s,
\quad i \in I_N.\notag
\end{align}
The rigorous derivation of the Keller-Segel system \eqref{EDP_KS} from such a particle system is quite challenging due to the nature of interaction between the particles which is both non-Markovian and singularly attractive. Biologically speaking, one may understand the memory in the interaction as follows: at some current time $s$, cells feel the chemoattractant produced at all past instants $u<s$ by all of the other cells, which has evolved (through the heat kernel corrected with the decay rate) between $u$ and $s$.
It also feels the initial chemoattractant particles through $b_s^{c_0,\theta,\lambda}$.
\vip
In dimension $1$, the propagation  of chaos of \eqref{PS} towards \eqref{eq:NLSDE} (without any restriction on the parameters) is obtained  in \cite{JTT}  by means of Girsanov transformation.

\paragraph{The main result of~\cite{FT1}.}
In dimension $2$, we have shown that for each $\theta>0$,  there exists some $\chi_{\theta}^*>0$ such that for all $\lambda>0$, all $\rho_0\in \Pp(\R^2)$, all nonnegative $c_0\in L^\infty(\R^2)$, all $\chi \in (0,\chi^*_\theta)$, there exists an exchangeable $N$-Keller-Segel particle system~\eqref{PS} for each $N\geq 2$ with i.i.d. $\rho_0$-distributed initial cell positions. Moreover, the family of empirical measures 
$$
\Big(\mu^N=\frac1N\sum_{i=1}^N \delta_{(X^{i,N}_t)_{t\geq 0}}, N \geq 2\Big)
$$  
is tight in $\Pp(C(\R_+,\R^2))$ and any (possibly random) limit point
of $(\mu^N)_{N\geq 2}$ a.s. solves the martingale problem related to~\eqref{eq:NLSDE}.

\vip
The threshold $\chi^*_\theta$ is recalled in Remark~\ref{chistar} 
below and we have (see~\cite[Remark~15]{FT1} with $p=\infty$) 
\begin{equation}\label{estth}
\liminf_{\theta\to 0} \chi_{\theta}^* \geq 3.28, \quad \chi_{1}^*\geq 1.39 \quad
\text{and} \quad \liminf_{\theta\to \infty} \sqrt\theta\chi_{\theta}^* \geq 1.65. 
\end{equation}
This is clearly not optimal, since we expect the result to hold for all $\chi\in (0,8\pi)$ (for all $\theta>0$) and even for larger values of $\chi$ if $c_0=0$ and if $\theta$ is large. 
\vip
We actually suppose a weaker condition than $c_0 \in L^\infty(\R^2)$ in~\cite{FT1} and some more general initial conditions for~\eqref{PS}, but this is not very important for the present work.
\vip
With respect to~\cite{FT1}, our goal is to obtain the one-species system as a $M\to \infty$ limit of the two-species system.

\paragraph{First main result (informal).}
We first study the link between the two-species and the one-species
particle systems. Assume that $\rho_0 \in \Pp(\R^2)$ and $c_0 \in L^1(\R^d)\cap L^\infty(\R^2)$,
that $\theta>0$, $\lambda>0$ and $\chi \in (0,\chi^*_\theta)$. Fix $N\geq 2$ and assume that
the $(0,1]$-valued sequence $(\e_n)_{n\geq 1}$ and the $(0,\infty)$-valued sequence $(M_n)_{n\geq 1}$ satisfy
\begin{equation}\label{ccc1}
\text{for some $\sigma \in \Big(0,\frac12\Big]$},\quad (a) \; \lim_n M_n=\infty, \quad (b)\; \lim_n\e_n=0,\quad (c)\; \lim_n M_n\e_n^{3+\sigma}=\infty.
\end{equation}
Then, the family 
$((X^{i,N,M_n,\e_n}_t)_{t\geq 0, i \in I_N}, n\geq 1)$ is tight and any limit point solves~\eqref{PS}.

\vip
Notice that \eqref{ccc1} holds if e.g. $\lim_n M_n=\infty$ and $\e_n=M_n^{-\alpha}$ for some 
$\alpha\in (0,\frac 13)$.

\paragraph{Second main result (informal).}
We next study the convergence of the two-species system to the Keller-Segel equation.
Assume that $\rho_0 \in \Pp(\R^2)$ and $c_0 \in L^1(\R^d)\cap L^\infty(\R^2)$,
that $\theta>0$, $\lambda>0$ and $\chi \in (0,\chi^*_\theta)$. Assume that
the $(0,1]$-valued sequence $(\e_N)_{N\geq 2}$ and the $(0,\infty)$-valued sequence $(M_N)_{N\geq 2}$ satisfy
\begin{equation}\label{ccc2}
\text{for some $\sigma\in \Big(0,\frac12\Big]$},\quad
(a)\; \lim_N \e_N=0, \quad (b)\; \lim_N NM_N\e_N^{3+\sigma}=\infty, \quad
(c) \;\lim_N NM_N^{1+\sigma}\e_N^{2+\sigma}=\infty.
\end{equation}
Then the family of empirical measures 
$$
\Big(\mu^N:=\frac1N\sum_{i=1}^N \delta_{(X^{i,N,M_N,\e_N}_t)_{t\geq 0}} , N\geq 2\Big)
$$ 
is tight in 
$\Pp(C(\R_+,\R^2))$ and any (possibly random) limit point
of $(\mu^N)_{N\geq 2}$ a.s. solves the martingale problem related to~\eqref{eq:NLSDE}.

\vip
We emphasize that we do not need $M_N$ to be bounded from below: Our result requires that $NM_N$, which represents the order of magnitude of the (varying) number of chemoattractant particles, tends to infinity. If $M_N$ is bounded from below, then~\eqref{ccc2}-(c) is implied by~\eqref{ccc2}-(a)-(b). 
Although we can find, using logarithms, some cases where~\eqref{ccc2}-(a)-(b) hold while~\eqref{ccc2}-(c) does not, the following example shows
that~\eqref{ccc2}-(c) is rather harmless.
\vip

If $M_N=N^\beta$ and $\e_N=N^{-\alpha}$ for some $\beta \in (-1,\infty)$ and some $\alpha>0$, then~\eqref{ccc2}-(b) is satisfied if and only if $\alpha \in (0,\frac{1+\beta}3)$ (choose $\sigma=(\frac{1+\beta}{2\alpha}-\frac 32)\land\frac12>0$), which implies that~\eqref{ccc2} is entirely satisfied
(choose $\sigma=(\frac{1+\beta}{2\alpha}-\frac 32)\land(\frac{1+\beta-2\alpha}{\alpha-\beta})\land\frac12>0$ if $\beta<0$
and $\sigma=(\frac{1+\beta}{2\alpha}-\frac 32)\land\frac12>0$ if $\beta\geq 0$).

\paragraph{Discussion.}
Let us first mention that we find the same threshold $\chi_\theta^*$ as in~\cite{FT1} when we studied the one-species system. As already mentioned, this threshold is clearly not optimal, but, at least, the increased complexity of the two-species system does not lead to a deteriorated threshold.

\vip 
Let us now discuss the mollification parameter. For $(Z_i)_{i\geq 1}$ i.i.d., valued in $\R^2$, with a smooth fast decaying density $f$ and for $\xi_\ell=\frac1\ell\sum_{i=1}^\ell \delta_{Z_i}$, it holds that $(\xi_\ell \ast \nabla \phi_{\e_\ell})(x) \to \nabla f(x)$ in probability as $\ell\to \infty$ for all $x\in \R^2$ as soon as $\e_\ell\to 0$ 
and $\lim_{\ell} \ell \e_\ell^{2}=\infty$, and these conditions are necessary. This can be easily seen through a bias/variance study, see Stone~\cite{stone}.
\vip

In our framework, we need that in some sense, $(\nu_t\ast \nabla \phi_\e) (x)$ approximates $\nabla c_t(x)$,
where $\nu_t$ is the empirical concentration of chemoattractant particles. The order of magnitude of the (varying) number of those particles is $NM$. Although this is not clear, it might be possible to replace conditions (b)-(c) in~\eqref{ccc1} by $\lim_n M_n \e_n^2=\infty$ and conditions (b)-(c) in~\eqref{ccc2} by  $\lim_N NM_N \e_N^2=\infty$.
We do not see at all how to reach such a weak condition, mainly due to 
Step~3 of the proof of Proposition~\ref{nest}. Leaving aside complex computations, we notice that chemoattractant particles are far from being i.i.d. Brownian motions as they are correlated through their common ancestor and their initial positions. This may explain why one could not reach the best possible condition.

\vip

With more work, it is likely that we could include, as in~\cite{FT1}: The case $\lambda=0$ (which is actually slightly simpler), the case where we only have $c_0 \in L^p(\R^2)$ for some $p>2$ (with a worse threshold $\chi^*_{\theta,p}$), the case where the initial cell and chemoattractant positions are not i.i.d. but enjoy some some exchangeability/consistency.

\paragraph{Plan of the paper.}
In Section~\ref{secmr}, we give some precise definitions of the objects presented in this introduction as well as rigorous statements of our main results, see Theorem~\ref{main}.
\vip
In Section~\ref{spre}, we recall a crucial functional inequality obtained in~\cite{FT1} as well as some constants introduced in~\cite{FT1} to define the threshold $\chi_\theta^*$. We also check an easy Itô formula concerning chemoattractant particles.
\vip
In the two-species system, cells interact indirectly, through chemoattractant particles that they produce. Hence Section~\ref{fcat} is important: we quantify how far is the drift of the cell particles from a trajectory-dependent drift (as in the one-species system). This is where the conditions on the mollification parameter arise.
\vip
In Section~\ref{meat}, we obtain a uniform in $N,M,\e$ control  of the drift of cells in the two-species system
and deduce Theorem~\ref{main}-(i), i.e. tightness. In the whole section except Lemma~\ref{nest}, we adapt the computations elaborated in~\cite{FT1}. Due to the complexity of the present model, these computations are more involved.
\vip
Theorem~\ref{main}-(ii), i.e. convergence of the two-species system to the one-species system is proved in Section~\ref{tstos}.
\vip
The proof of Theorem~\ref{main}-(iii), i.e. convergence of the  two-species  system to the Keller-Segel model, is shown in Section~\ref{tstks}.
\vip
Finally, Appendix~\ref{apa} is devoted to some lemmas about mollifiers and the Gaussian kernel.

\paragraph{Acknowledgments.} The authors thank Maxence Baccara for his master internship around a regularised version of the present problem. Our discussions and his preliminary work helped us better understand the challenges of the present work.

\section{Main results}\label{secmr}

\paragraph{The nonlinear martingale problem.}
We introduce the set $\Pp(C(\R_+,\R^2))$ of probability measures on $C(\R_+,\R^2)$.
As is well-known, one may deal with the nonlinear S.D.E.~\eqref{eq:NLSDE} through the corresponding nonlinear martingale problem. Recall that $K^{\theta,\lambda}_{s}(x)$ and $b_s^{c_0,\theta,\lambda}$ were introduced in~\eqref{gkb}.

\begin{mydef}
\label{defMP}
Fix some $\rho_0 \in \Pp(\R^2)$ and some nonnegative $c_0\in L^\infty(\R^2)$. 
Consider the canonical space $C(\R_+,\R^2)$ equipped with its
canonical process $(w_t)_{t\geq 0}$ and its canonical filtration. Let $\Q \in \Pp(C(\R_+,\R^2))$
and denote $(\Q_t)_{t\geq 0}$ its family of one-dimensional time marginals. 
We say that $\Q$ solves the non-linear 
martingale problem $\mathcal{(MP)}(\rho_0,c_0)$ if
\begin{equation}
\label{eq:ConditionMP}
\int_0^t \int_{\R^2} \int_0^s \int_{\R^2}   (K^{\theta,\lambda}_{s-u}(x-y) + 
|\nabla K^{\theta,\lambda}_{s-u}(x-y)|) \Q_u(\dd y)  \dd u   \Q_s(\dd x) \dd s < \infty,
\end{equation}
and if for any $\varphi \in C_c^2(\R^2)$, the process
\begin{equation}
\label{def_mart}
M^\varphi_t:=\varphi(w_t)-f(w_0)-\int_0^t \Big[ \Delta \varphi(w_u)+\chi \nabla \varphi(w_u)\cdot 
\Big(\nabla b_s^{c_0,\theta,\lambda}(w_s) + \int_0^u (\nabla K^{\theta,\lambda}_{u-r}\ast \Q_r)(w_u) \dd r \Big) \Big]\dd u
\end{equation}
is a $\Q$-martingale.
\end{mydef}

As shown in~\cite[Section~1]{FT}, \eqref{eq:ConditionMP} implies that everything makes sense 
in~\eqref{def_mart}; and for $\Q$ a solution to $\mathcal{(MP)}(\rho_0,c_0)$,
setting $c_t=b_t^{c_0,\theta,\lambda}+ \int_0^t (K^{\theta,\lambda}_{t-s} \ast \Q_s )\,\dd s$, it holds that 
$(\Q_t,c_t)_{t\geq 0}$ is a weak solution to~\eqref{EDP_KS} in the sense of~\cite[Definition~1]{FT}.

\paragraph{A single-species particle system.}

\begin{mydef}\label{def:PS} Fix $N\geq 2$, $\rho_0 \in \Pp(\R^2)$ and some nonnegative
$c_0\in L^\infty(\R^2)$. Consider some i.i.d. family $(B_t^i)_{t\geq 0,i\in I_N}$ of $2D$-Brownian 
motion, as well as i.i.d. family 
$(X_0^i)_{i\in I_N}$ of $\R^2$-valued $\rho_0$-distributed random variables, 
independent of the family of Brownian motions. 
A family of continuous $\R^2$-valued processes $(X^{i,N}_t)_{t\geq 0,i\in I_N}$ is said to be a $(\rho_0,c_0,N)$-1SKS 
(Single Species Keller-Segel)
particle system if a.s., for all $t\geq 0$, all $i,j \in I_N$ with $i\neq j$,
\begin{align}
\label{PScond}
\int_0^t \int_0^s |\nabla K^{\theta,\lambda}_{s-u} ( X^{i,N}_s-X^{i,N}_u)|  \dd u \dd s<\infty 
\end{align}
and if a.s., we have~\eqref{PS} for all $t\geq 0$, all $i\in I_N$.
\end{mydef}

\paragraph{A two-species particle system.}
 We fix $\rho_0 \in \Pp(\R^2)$ and $c_0 \in L^\infty(\R^2)\cap L^1(\R^2)$,
as well as an integer $N\geq 2$ and two real numbers $M>0$ and $\e\in(0,1]$.

\vip
We have $N$ cells indexed by $I_N=\{1,\dots,N\}$ and a varying number of
chemoattractant particles indexed by $\Uu_{N,M}=\Uu^0_{N,M}\cup\Uu^1_N$, where, with 
$L_{N,M}=\lfloor (N-1)M||c_0||_{L^1(\R^2)}\rfloor $,
\begin{gather*}
\text{$\Uu^0_{N,M}:=\{0\}\times\{1,\dots,L_{N,M}\}$ labels the chemoattractant particles present at time $0$,}\\
\text{$a=(a_1,a_2)\in \Uu^1_{N}:=I_N\times\N$ labels
the $a_2$-th chemoattractant particle produced by the cell $a_1$.}
\end{gather*}

We then consider a family of mutually independent random elements:
\vip
\noindent $\bullet$ $X^{k}_0$ for $k \in I_N$, with law $\rho_0$,
\vip
\noindent $\bullet$ $Y^a_0$ for $a \in \Uu^0_{N,M}$, with law $\frac{c_0}{||c_0||_{L^1(R^2)}}$,
\vip
\noindent $\bullet$ $(B^k_t)_{t\geq 0}$ for $k\in I_N$ and $(W^a_t)_{t\geq 0}$ for $a \in \Uu_{N,M}$ some 
$2D$-Brownian motions,
\vip
\noindent $\bullet$ $(\pi^{k,M}_t)_{t\geq 0}$ for $k\in I_N$ 
a Poisson process with rate $\frac M \theta$, we write $\pi^{k,M}_t=\sum_{i\geq 1}\1_{\{T^{k,M}_i\leq t\}}$,
\vip
\noindent $\bullet$ $(\Lambda^a_t)_{t\geq 0}$ for $a \in \Uu_{N,M}$ 
a Poisson process with rate $\frac \lambda \theta$, we write $\Lambda^a_t=\sum_{i\geq 1}\1_{\{S^{a}_i\leq t\}}$.
\vip
We introduce, for each $a\in \Uu_{N,M}$, the birth instant $\tau_a^{N,M}$ and death instant $\zeta_a^{N,M}$
defined by
\begin{gather*}
\text{if $a \in \Uu^0_{N,M}$, then $\tau_a^{N,M}=0$ and  $\zeta_a^{N,M}=S^a_1$,}\\
\text{if $a=(a_1,a_2) \in \Uu^1_{N}$, then $\tau_a^{N,M}=T^{a_1,M}_{a_2}$ and  
$\zeta_a^{N,M}=\min\{S^a_i : i\geq 1, S^a_i\geq \tau_a^{N,M}\}$.}
\end{gather*}
We next consider a family of mollifiers as follows.
\begin{assumption}\label{mol}
Let $\phi$ be a smooth probability density on $\R^2$ with $D\phi, D^2\phi, D^3\phi \in L^1(\R^2)\cap L^\infty(\R^2)$ and satisfying $\phi(-x)=\phi(x)$. For $\e \in (0,1]$ and $x \in \R^2$, we set
\begin{equation*}
\phi_\e(x)=\e^{-1}\phi(\e^{-\frac12}x).
\end{equation*}
\end{assumption}
For $\e\in (0,1]$ fixed, we consider the following particle system:
\begin{equation}\label{troc}
\left\{\begin{array}{lll}
X^{k,N,M,\e}_t\!=\! X^k_0 \!+\! \sqrt 2 B^k_t \! 
+ \! \chi \!\int_0^t (\nu^{k,N,M,\e}_s \!\ast\! \nabla \phi_{\e})(X^{k,N,M,\e}_s)\dd s,
&t\geq 0, \;\; k\in I_N,\\[5pt]
Y^{a,N,M,\e}_t=Y^{a}_0+\sqrt{2\theta^{-1}} W^a_t, & t\in [0,\zeta_a^{N,M}), \;\; a \in \Uu^0_{N,M},\\[5pt]
Y^{a,N,M,\e}_t=X^{a_1,N,M,\e}_{\tau_a^{N,M}}+\sqrt{2\theta^{-1}} (W^a_t-W^a_{\tau_a^{N,M}}), & t\in [\tau_a^{N,M},\zeta_a^{N,M}), 
\;\; a \in \Uu^1_{N},\\[5pt]
\nu^{k,N,M,\e}_t= \frac1{(N-1)M}\sum_{a \in \Uu_{N,M},a_1\neq k} \1_{\{t\in [\tau_a^{N,M},\zeta_a^{N,M})\}}\delta_{Y^{a,N,M,\e}_t}, 
& t\geq 0, \;\; k\in I_N.
\end{array}\right.
\end{equation}
We also introduce
\begin{equation*}
\mu^{N,M,\e}=\frac 1 N \sum_{i \in I_N} \delta_{(X^{i,N,M,\e}_t)_{t\geq 0}}, \;\;
\mu^{N,M,\e}_t=\frac 1 N \sum_{i\in I_N} \delta_{X^{i,N,M,\e}_t} \;\; \text{and} \;\;
\mu^{k,N,M,\e}_t=\frac 1 {N-1} \sum_{i\in I_N,i\neq k} \delta_{X^{i,N,M,\e}_t}.
\end{equation*}

The set of birth and death instants $\Ss=\{\tau_a^{N,M}$, $\zeta_a^{N,M} : a \in \Uu_{N,M}\}$
is well and uniquely defined from the Poisson processes
$((\pi^{k,M}_t)_{t\geq 0} : k\in I_N)$ and $((\Lambda^a_t)_{t \geq 0}:a\in \Uu_{N,M})$. 
It is discrete: it can be written as $\Ss=\{U_k : k\geq 0\}$ with 
$0=U_0<U_1<\dots$. Since $\nabla \phi_\e$ is Lipschitz continuous, one easily
shows the following result, working by induction on the time intervals $[U_i,U_{i+1})$.

\begin{remark}\label{rkeu}
For any  $\rho_0 \in \Pp(\R^2)$, any $c_0 \in L^\infty(\R^2)\cap L^1(\R^2)$, any $\chi>0$, $\lambda>0$, 
$\theta>0$, any integer $N\geq 2$, any $M>0$ and any $\e\in (0,1]$,
the above system has a pathwise unique solution. We say that
$(X^{i,N,M,\e}_t)_{t\geq 0,i\in I_N}$ is a $(\rho_0,c_0,N,M,\e)$-2SKS particle system.
It holds that the family $((X^{i,N,M,\e}_t,\nu^{i,N,M,\e}_t)_{t\geq 0} : i \in I_N)$ is exchangeable.
\end{remark}

\paragraph{Main result.}

We fix $\rho_0 \in \Pp(\R^2)$ and $c_0 \in L^\infty(\R^2)\cap L^1(\R^2)$,
as well as $\chi>0$, $\lambda>0$ and $\theta>0$. We 
consider, for each integer $N\geq 2$, each $M>0$, each $\e\in (0,1]$, the  $(\rho_0,c_0,N,M,\e)$-2SKS system
$(X^{i,N,M,\e}_t)_{t\geq 0,i\in I_N}$ with some $\e\in (0,1]$. Consider also the threshold $\chi^*_\theta>0$ 
defined in Remark~\ref{chistar}.
We endow $C(\R_+,\R^2)$ with the topology of uniform convergence on compact intervals
and $\Pp(C(\R_+,\R^2))$ with the weak convergence topology.

\begin{theorem}\label{main}
Suppose Assumption~\ref{mol} and that $\chi<\chi^*_\theta$. Fix $\sigma\in (0,\frac 12]$.

\vip

(i) Set
\begin{equation}\label{condie1}
\Aa_\sigma=\{(N,M,\e) : N\geq 2, M>0, \e\in (0,1], N M\e^{3+\sigma}\geq 1, N M^{1+\sigma} \e^{2+\sigma} \geq 1\}.
\end{equation}
The family 
$((X^{1,N,M,\e}_t)_{t\geq 0} : (N,M,\e)\in \Aa_\sigma)$ is tight in $C(\R_+,\R^2)$.

\vip

(ii) Fix $N\geq 2$ and consider $(M_n)_{n\geq 1}$ valued in $(0,\infty)$ and $(\e_n)_{n\geq1}$ valued in 
$(0,1]$ such that 
\begin{equation}\label{condie2}
\lim_{n\to \infty} M_n= \infty, \quad \lim_{n\to \infty} \e_n=0 \quad \text{and} \quad \lim_{n\to\infty}
M_n\e_n^{3+\sigma}=\infty.
\end{equation}
Then the family $((X^{k,N,M_n,\e_n}_t)_{t\geq 0, k \in I_N} : n\geq 1)$ is tight in $C(\R_+,(\R^2)^N)$
and any limit point $((X^{k,N}_t)_{t\geq 0, k \in I_N}$ as $M\to \infty$ is a $(\rho_0,c_0,N)$-1SKS system.

\vip

(iii) Consider  $(M_N)_{N\geq 2}$ valued in $(0,\infty)$ and $(\e_N)_{N\geq2}$ valued in $(0,1]$ such that 
\begin{equation}\label{condie3}
\lim_{N\to \infty} \e_{N}=0, \quad \lim_{N\to \infty}  N M_N\e_N^{3+\sigma}=\infty\quad \text{and} 
\quad \lim_{N\to \infty} N M_N^{1+\sigma}\e_N^{2+\sigma}=\infty.
\end{equation}
Then the family $(\mu^{N,{M_N}} : N \geq 2)$ is tight in $\Pp(C(\R_+,\R^2))$ and any (possibly random) 
limit $\mu$ a.s. solves $\mathcal{(MP)}(\rho_0,c_0)$.
\end{theorem}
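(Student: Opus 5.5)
The plan is to compare the random drift of cell $1$ with a trajectory-dependent ``memory'' drift of the same type as in the one-species system~\eqref{PS}. For each cell $k$ I would introduce
\[
D^{k}_s:=\nabla b^{c_0,\theta,\lambda}_s(X^{k,N,M,\e}_s)+\frac{1}{N-1}\sum_{j\neq k}\int_0^s \nabla K^{\theta,\lambda}_{s-u}(X^{k,N,M,\e}_s-X^{j,N,M,\e}_u)\,\dd u .
\]
This is the drift that the chemoattractant particles would produce on average. Indeed, a particle produced by cell $j$ at time $u$ sits at $X^{j}_u$, survives up to time $s$ with probability $e^{-\lambda(s-u)/\theta}$, and diffuses with variance $2(s-u)/\theta$. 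Production happens at rate $M/\theta$ and the normalization is $1/((N-1)M)$, which recovers $K^{\theta,\lambda}_{s-u}$. The initial particles give $\theta K_s\ast c_0=b_s$.

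Conditionally on the cell trajectories, the chemoattractant particles are not independent, because they influence the cells. So I would use an Itô/martingale decomposition (the Itô formula for chemoattractant particles from Section~\ref{spre}) to write $(\nu^{k}_s\ast\nabla\phi_\e)(X^k_s)-(D^k_s\ast\phi_\e\text{-mollified})$ as a sum of martingale and compensator terms. The key quantity is the error
\[
R^{k}_t:=\E\Big[\int_0^t\big|(\nu^{k}_s\ast\nabla\phi_\e)(X^k_s)-\phi_\e\ast D^k_s\big|\,\dd s\Big].
\]
Its variance part should be of order $(NM)^{-1/2}\e^{-3/2}$, coming from $\|\nabla\phi_\e\|_\infty^2\sim\e^{-3}$ and about $NM$ particles. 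A refined version, using the Gaussian smoothing over the lifetime of the particles, should give a bound like $C[(NM\e^{3+\sigma})^{-\gamma}+(NM^{1+\sigma}\e^{2+\sigma})^{-\gamma}]$. The second term handles particles that are freshly born, still close to their mother cell, and not yet smoothed; it needs $M$ large relative to $\e$. This is exactly where $\Aa_\sigma$ comes from, and I expect this estimate (Section~\ref{fcat}, Proposition~\ref{nest}) to be the main obstacle. The particles are correlated through their common mother and through the cells' feedback, so I would condition on the cell trajectories only through predictable quantities and bound the quadratic variation directly.

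Next I would get uniform control of the drift. Following~\cite{FT1}, I would bound $\E[\int_0^t|\phi_\e\ast D^1_s|^{p}\dd s]$ uniformly, for some $p>1$, using the functional inequality recalled in Section~\ref{spre}. That argument uses a Gaussian-type a priori bound and a Girsanov/entropy-type estimate on the law of pairs $(X^1_s,X^2_u)$, and it is where $\chi<\chi^*_\theta$ enters. The mollification only helps, since $\phi_\e\ast g_t=g_t$-like kernels are again Gaussian-dominated. The error term $R^1_t$ is treated as a small perturbation, uniformly on $\Aa_\sigma$. The resulting uniform bound on $\E[(\int_s^t|\text{drift}|\,\dd r)^{q}]$ gives Kolmogorov/Aldous tightness of $X^1$, which proves (i).

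For (ii) and (iii), tightness of the $N$-tuple and of $\mu^{N}$ follows from (i) and exchangeability. For (ii), with $N$ fixed, $R^k\to0$ by~\eqref{condie2}. Indeed, $M_n\e_n^{3+\sigma}\to\infty$ together with $M_n\to\infty$ forces $M_n^{1+\sigma}\e_n^{2+\sigma}\ge M_n\e_n^{3+\sigma}\to\infty$, since $M_n^\sigma\ge \e_n^{1}$ eventually. So along a converging subsequence the drift converges to $\int_0^t D^k_s\,\dd s$. To pass to the limit through the singular kernel, I would use the uniform integrability of $|\nabla K_{s-u}(X^k_s-X^j_u)|$ coming from the moment bound of step two, after cutting off near the singularity. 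The Brownian motions and initial conditions pass to the limit jointly, so the limit is a 1SKS system.

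For (iii), $R^1\to0$ by~\eqref{condie3}. I would then test the martingale problem functional $F(\mu)$ built from $M^\varphi$ at times $s_1<\dots<s_k\le s<t$ and show $\E|F(\mu^N)|\to0$. The interaction term $\frac1{N-1}\sum_j$ equals $\int\nabla K\ast\mu^{k,N}_u$ and is continuous in $\mu$ once it is truncated near the singularity. The truncation error is again controlled uniformly by step two, while the martingale parts give $O(N^{-1/2})$ by independence of the $B^k$. Fatou and the same bounds then give~\eqref{eq:ConditionMP} for the limit, so any limit point a.s. solves $\mathcal{(MP)}(\rho_0,c_0)$.
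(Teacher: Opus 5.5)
Your architecture matches the paper's. You compare the chemotactic drift with the mollified one-species drift through the Itô formula for $\nu^1$ (Lemma~\ref{itonu}) applied to $\varphi^\e_{t,x}(s,y)=\theta(\nabla\phi_\e\ast K^{\theta,\lambda}_{t-s})(x-y)$, run the argument of~\cite{FT1} with the comparison error as a perturbation, and pass to the limit using uniform integrability. However, the two steps that carry the real content are either missing an idea or rest on a mechanism that does not apply.

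\textbf{First gap: evaluation at the random point $X^1_t$.} For fixed $(t,x)$ the decomposition gives an initial-fluctuation term plus three martingales (Brownian, birth, death) in $s\in[0,t]$. But $X^1_t$ depends on all this noise through the drift. Fixed-$x$ quadratic-variation bounds, which give the $1/(NM\e^3)$ you predict, therefore say nothing about the value at $X^1_t$. Conditioning on predictable quantities cannot help, since $x$ is only chosen at the terminal time of these martingales.

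The paper instead bounds $\E[\|M^k_t\|^2_{L^\infty(\R^2)}]$. It uses the Gagliardo--Nirenberg inequality $\|u\|_\infty\le A_r\|\nabla u\|_{L^r}^{2/r}\|u\|_{L^r}^{1-2/r}$ with $r=2/(1-\sigma)$, BDG in $L^r$, and the $L^r_x$ estimates of Lemma~\ref{tri}.
\begin{itemize}
\item This is where the loss $\e^{-\sigma}$ comes from, not from smoothing over lifetimes.
\item The term $NM^{1+\sigma}\e^{2+\sigma}$ comes from $\frac r2$-th moments of the Poisson birth integrals (Step~4 of the proof of Proposition~\ref{prel}), which only matter when $M<1$.
\item The output must be $\sup_{t\le T}\E[|U^1_t|^2]$, not an $L^1(\Omega\times[0,t])$ quantity like your $R^k_t$. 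The error has to be controlled in $L^{2\gamma-2}$ to be fed into the next step.
\end{itemize}

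\textbf{Second gap: the uniform drift bound.} This is not a Girsanov or entropy argument, and such an argument would fail here. It needs a uniform bound on $\E\int_0^T|\text{drift}|^2\dd s$, which is unavailable: only the $(2\gamma-2)$-th moment with $2\gamma-2<2$ is controlled. Near collisions the drift behaves like $|X^1_s-X^2_s|^{-1}$, whose square is not locally integrable in $\R^2$.

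What the paper does, following~\cite{FT1}, is close a loop of Itô-formula inequalities.
\begin{itemize}
\item Lemma~\ref{nest} bounds $\E\int_0^u|(\nu^1_t\ast\nabla\phi_\e)(X^1_t)|^{2\gamma-2}\dd t$ by $((1+\eta)C_4)^{2\gamma-2}$ times $\E\int_0^u\int_0^t\int(t-s+\alpha|R^{1,2}_{t,s}-\sqrt\e y|^2)^{-\gamma}\phi(y)\dd y\dd s\dd t$, plus a constant. The error of Proposition~\ref{prel} is only bounded, not small, on $\Aa_\sigma$. It is absorbed additively via $(a_1+a_2+a_3)^\beta\le(1+\eta)^\beta a_1^\beta+A_\eta(a_2^\beta+a_3^\beta)$, so the threshold is unchanged.
\item Lemma~\ref{lemma:LL} applies Lemma~\ref{lemma:grosIto} to $F_\e(t,\cdot)=F(t,\cdot)\ast\phi_\e$, with $F(t,x)=-(t+\alpha|x|^2)^{1-\gamma}$. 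This bounds that trajectorial quantity by $\E\int_0^u\int|R^{1,2}_{s,s}-\sqrt\e y|^{2-2\gamma}\phi(y)\dd y\dd s$.
\item The Itô formula for $\psi_\e(X^1_s,X^2_s)$, with $\psi(x,y)=|x-y|^{4-2\gamma}/(1+|x-y|^{4-2\gamma})$, bounds the latter.
\item At each step the drift term is handled by Hölder's inequality, Lemma~\ref{lemma:FI} and the preceding bounds. Condition~\eqref{condistar} is exactly what makes the loop close.
\end{itemize}
The mollifier is handled by averaging every singular quantity against $\phi(y)\dd y$, using $\phi(-x)=\phi(x)$, not by Gaussian domination. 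This loop is also what yields~\eqref{eq:Prop21-2} and hence~\eqref{eq:Prop21-5}, the uniform integrability you need in (ii) and (iii).
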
 

As a consequence of the last point, there exists, under the same assumptions, a subsequence $N_n\to \infty$ 
such that $(\mu^{N_n,{M_{N_n}}}_t)_{t\geq 0}$ converges in law to some (possibly random)
$(\mu_t)_{t\geq 0}$ and, setting $\nu_t=b_t^{c_0,\theta,\lambda}+ \int_0^t (K^{\theta,\lambda}_{t-s} \ast \mu_s )\,\dd s$, it holds that $(\mu_t,\nu_t)_{t\geq 0}$ is a.s. a weak solution of the Keller-Segel equation~\eqref{EDP_KS} in the sense of~\cite[Definition~1]{FT1}.
See the introduction for comments about the threshold $\chi^*_\theta$, about the assumptions on the initial conditions and about~\eqref{condie2} and~\eqref{condie3}. 

\section{Preliminaries}
\label{spre}

In this section, we present a crucial functional inequality, we then introduce the threshold for the parameter
$\chi$ and we finally derive an S.D.E. for the chemoattractant empirical measure.

\paragraph{Functional inequality.} 
The following functional inequality is proved in~\cite{FT1}.

\begin{lemma}
\label{lemma:FI}
Let $b>a>0$ and $t>0$. For any measurable function $f:[0, t] \to \R_+  $, we have 
$$
\int_0^t \frac{\dd s}{(t-s+ f(s))^{1+a}}\leq \kappa(a,b) 
\Big(\int_0^t \frac{\dd s}{(t-s+ f(s))^{1+b}}\Big)^{\frac{a}{b}},\quad \hbox{where} \quad
\kappa(a,b)=\frac{a+1}{a}\Big[\frac{b}{b+1}\Big]^{\frac a b}.
$$
\end{lemma}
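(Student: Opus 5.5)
The plan is a rearrangement argument: replace $f$ by its decreasing rearrangement in the variable $u=t-s$, so that it suffices to compare two integrals of the form $\int h(u)^{-(1+p)}\,\dd u$ with $h(u)\geq u$. Set $h(s)=t-s+f(s)$ and $u=t-s\in[0,t]$, so that $h\geq u$ pointwise. Define the distribution function $\lambda(r)=|\{s\in[0,t]: h(s)\leq r\}|$. Since $h(s)\leq r$ forces $t-s\leq r$, we have $\lambda(r)\leq r\wedge t$. By the layer-cake formula,
$$
I_p:=\int_0^t h(s)^{-(1+p)}\dd s=(1+p)\int_0^\infty r^{-(2+p)}\lambda(r)\,\dd r,
$$
so everything reduces to a one-dimensional problem about a nondecreasing function $\lambda$ with $0\leq\lambda(r)\leq r$. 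The case $I_b=\infty$ is trivial, so assume $I_b<\infty$.

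The core step is to bound $I_a$ by splitting the $r$-integral at a level $R>0$ to be optimized. On $[0,R]$ we use the constraint $\lambda(r)\leq r$, which gives $(1+a)\int_0^R r^{-(1+a)}\,\dd r$; this diverges at $0$ because $a>0$. So this naive split does not work as stated, and I would instead split the original $s$-integral into $\{h\leq R\}$ and $\{h>R\}$.

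On $\{h>R\}$ we use $h^{-(1+a)}=h^{-(1+b)}h^{b-a}$, which is not directly useful since $h^{b-a}$ is large there. It is better to reverse the roles: on $\{h>R\}$ bound $h^{-(1+a)}\leq$ (contribution computed from $\lambda$), and on $\{h\leq R\}$ use $h^{-(1+a)}\leq R^{b-a}h^{-(1+b)}$. The cleaner route is an extremal argument. Among nondecreasing $\lambda$ with $\lambda(r)\leq r$ and $\int r^{-(2+b)}\lambda=I_b/(1+b)$ fixed, the ratio $r^{-(2+a)}/r^{-(2+b)}=r^{b-a}$ is increasing. Hence $\int r^{-(2+a)}\lambda$ is maximized by pushing the mass of $\lambda$ toward large $r$, that is, by making $\lambda$ as small as possible at small $r$. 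Since $\lambda$ need only be nondecreasing and $\leq r$, the extremal profile is $\lambda(r)=0$ for $r<R$ and $\lambda(r)=r$ for $r\geq R$. This corresponds to $h(s)=R+(t-s)$ with $t$ large, i.e.\ essentially $f\equiv R$ on the whole line.

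For this profile we have $I_p=(1+p)\int_R^\infty r^{-(1+p)}\,\dd r=\frac{1+p}{p}R^{-p}$. Eliminating $R$ gives $I_a=\frac{1+a}{a}\big(\frac{b}{1+b}I_b\big)^{a/b}$, which is exactly $\kappa(a,b)I_b^{a/b}$. This confirms the constant and shows the inequality is sharp.

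To make the extremal argument rigorous, fix $R$ by $\frac{1+b}{b}R^{-b}=I_b$ and write $\lambda=\lambda_R+\delta$ with $\lambda_R=r\1_{r\geq R}$. The equal-$I_b$ constraint gives $\int r^{-(2+b)}\delta=0$. Moreover $\delta\leq0$ on $[R,\infty)$ because $\lambda\leq r$ there, and $\delta\geq0$ on $[0,R)$. Then
$$
\int r^{-(2+a)}\delta=\int r^{-(2+b)}r^{b-a}\delta\leq R^{b-a}\int r^{-(2+b)}\delta=0,
$$
where the inequality holds because $r^{b-a}\leq R^{b-a}$ exactly where $\delta\geq0$ and $r^{b-a}\geq R^{b-a}$ where $\delta\leq0$. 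Hence $I_a\leq\frac{1+a}{a}R^{-a}=\kappa(a,b)I_b^{a/b}$.

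The main obstacle is verifying the layer-cake identity and the bound $\lambda(r)\leq r$ carefully, including measurability and possible infinite values. One also needs the sign analysis of $\delta$, which is the bathtub-principle step; it needs only $\lambda\leq r$ and not monotonicity.
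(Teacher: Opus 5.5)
Your final argument is correct: the layer-cake identity $I_p=(1+p)\int_0^\infty r^{-(2+p)}\lambda(r)\,\dd r$, the constraint $\lambda(r)\le r$, and the bathtub comparison with $\lambda_R(r)=r\1_{\{r\ge R\}}$, where $\frac{1+b}{b}R^{-b}=I_b$, give exactly $I_a\le\frac{1+a}{a}R^{-a}=\kappa(a,b)I_b^{a/b}$. The paper does not prove this lemma; it quotes it from \cite{FT1}. So there is no in-text argument to compare with, and what remains are clean-ups.

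\textbf{Integrability.} Splitting $\int r^{-(2+a)}\lambda=\int r^{-(2+a)}\lambda_R+\int r^{-(2+a)}\delta$ requires $r^{-(2+a)}\delta$ to be integrable. This holds: on $[0,R)$ you have $0\le r^{-(2+a)}\delta\le R^{b-a}r^{-(2+b)}\lambda$, which is integrable since $I_b<\infty$, and on $[R,\infty)$ you have $|\delta|\le r$. Also note that $I_b>0$, because $h<\infty$ and $t>0$, so $R\in(0,\infty)$ is well defined.

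\textbf{Extremizer.} Your sharpness remark names the wrong extremizer. The choice $f\equiv R$, i.e. $h(s)=R+(t-s)$, has distribution function $(r-R)^+\wedge t$, not $\lambda_R$. As $t\to\infty$ it gives $I_a/I_b^{a/b}\to b^{a/b}/a$, which is strictly below $\kappa(a,b)$ because $(1+a)^b>(1+b)^a$. The profile $\lambda_R$ comes from $h(s)=\max(t-s,R)$, i.e. $f(s)=(R-t+s)^+$, and this attains $\kappa(a,b)$ in the limit $t\to\infty$.

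This also lets you write the bathtub step pointwise. Set $\mu=\frac{1+a}{1+b}R^{b-a}$ and $g(h)=h^{-(1+a)}-\mu h^{-(1+b)}$. Then $g$ increases on $(0,R]$ and decreases on $[R,\infty)$, so $g(h(s))\le g(\max(t-s,R))$. Integrate in $s$, using $g\ge0$ on $[R,\infty)$ to extend the integral to $s\in(-\infty,t]$. This gives $I_a-\mu I_b\le\frac{1+a}{a}R^{-a}-\frac{1+a}{b}R^{-a}$. Since $\mu I_b=\frac{1+a}{b}R^{-a}$, this is again $I_a\le\frac{1+a}{a}R^{-a}$.

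\textbf{Abandoned attempts.} Delete them from the write-up. The level split you sketch before switching routes can be completed in the obvious way: use $h^{-(1+a)}\le R^{b-a}h^{-(1+b)}$ on $\{h\le R\}$, use $h\ge\max(t-s,R)$ on $\{h>R\}$, then optimize in $R$. This only yields the constant $\frac{(1+a)b}{a(b-a)}\big(\frac{b-a}{1+a}\big)^{a/b}$. That exceeds $\kappa(a,b)$ by the factor $\big(\frac{b}{b-a}\big)^{1-a/b}\big(\frac{1+b}{1+a}\big)^{a/b}>1$. The exact value of $\kappa$ feeds into $C_2$, $C_4$ and hence $\chi^*_\theta$. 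So the bathtub comparison is what actually proves the lemma as stated, not just a cleaner route.
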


\paragraph{The threshold.}

We introduce the same constants as in~\cite{FT1}: for $\alpha>0$, $\beta>0$, $\theta>0$ and 
$\gamma>\frac 32$,
\begin{gather}\label{eq:C0}
C_0(\beta):=\sup_{u\ge 0} \sqrt u (1+\beta u) ^{\frac 32} e^{-u}, \qquad
C_1(\alpha,\gamma):=(\gamma-1)(1-4\alpha(\gamma-1)),\\
\label{eq:C2}
C_2(\theta,\alpha,\gamma):= \frac{\sqrt{\alpha\theta}(\gamma-1)}{2\pi}C_0\Big(\frac{4\alpha}{\theta}\Big) 
\kappa\Big(\frac{1}{2}, \gamma-1\Big)\kappa\Big(\gamma-\frac32,\gamma-1\Big).
\end{gather}
We now summarize~\cite[Section~6]{FT1} in the case where $c_0 \in L^\infty(\R^2)$.

\begin{remark}\label{chistar}
For $\theta>0$, set 
$$
\chi^*_\theta=\sup\Big\{ \chi_{\theta,\alpha,\gamma} : \gamma \in \Big(\frac32,2\Big), 
\alpha \in \Big(0,\frac1{4(\gamma-1)}\Big) \Big\},
$$
where
$$
\chi_{\theta,\alpha,\gamma}= \sup\Big\{\chi>0 :\chi C_2(\theta,\alpha,\gamma) + 
\Big(\frac{\chi\sqrt\theta C_0\big(\frac{4\alpha}\theta\big) \kappa\big(\frac{1}{2}, \gamma-1\big)}
{4\pi\sqrt\alpha(4-2\gamma)}\Big)^{2(\gamma-1)} < C_1(\alpha,\gamma) \Big\}.
$$
If $\chi \in (0,\chi^*_\theta)$, we can find $\alpha>0$ and  $\gamma \in (\frac32,2)$ such that
\begin{equation}\label{condistar}
C_1(\alpha,\gamma)\!>\!0,\;\; C_1(\alpha,\gamma)\!>\!\chi C_2(\theta,\alpha,\gamma) \;\;\text{and}\;\;
(4-2\gamma)> \frac{\chi\sqrt{\theta} C_0\big(\frac{4\alpha}{\theta}\big) 
\kappa\big(\frac{1}{2}, \gamma-1\big)}{4\pi\sqrt{\alpha}[C_1(\alpha,\gamma) 
-\chi C_2(\theta,\alpha,\gamma)]^{\frac 1{2(\gamma-1)}}}.
\end{equation}
\end{remark}

The set of conditions~\eqref{condistar} will be used in Proposition~\ref{prop:mc}.

\paragraph{An S.D.E. formula for the chemoattractant particles.}
We establish here a formula concerning the chemoattractant particles concentration.
For $\varphi:\R_+\times\R^2 \to \R$, for $\mu$ a measure on $\R^2$ and for $t\geq 0$, we write 
$\langle \mu,\varphi(t,\cdot)\rangle=\int_{\R^2}\varphi(t,y)\mu(\dd y)$.

\begin{lemma}\label{itonu} 
Fix $\rho_0 \in \Pp(\R^2)$, $c_0 \in L^\infty(\R^2)\cap L^1(\R^2)$, as well as $\chi>0$, $\lambda>0$, 
$\theta>0$, $N\geq 2$, $M>0$ and $\e\in (0,1]$. Consider the $(\rho_0,c_0,N,M,\e)$-2SKS system
with all the associated random objects. For all $\varphi \in C^{2}(\R_+\times\R^d)$, all $t\geq 0$,
\begin{align*}
\langle \nu^{1,N,M,\e}_t,\varphi(t,\cdot)\rangle=& \langle \nu^{1,N,M,\e}_0,\varphi(0,\cdot)\rangle
+\frac 1 \theta \int_0^t \langle \mu^{1,N,M,\e}_s,\varphi(s,\cdot)\rangle\dd s\\
&+ \int_0^t \Big\langle  \nu^{1,N,M,\e}_s, \partial_t\varphi(s,\cdot)+\frac1\theta(\Delta \varphi(s,\cdot)
-\lambda \varphi(s,\cdot))\Big\rangle \dd s\\
&+\frac1{(N-1)M}\sum_{a \in \Uu_{N,M},a_1\neq 1}\sqrt{\frac 2 \theta}\int_0^t \1_{\{s\in [\tau_a^{N,M},\zeta_a^{N,M})\}} 
\nabla\varphi(s,Y^{a,N,M,\e}_s) \dd W^a_s\\
&+\frac1{(N-1)M}\sum_{i\in I_N,i\neq 1}\int_0^t\varphi(s,X^{i,N,M,\e}_s) \dd \tilde \pi_s^{i,M}\\
&-\frac1{(N-1)M}\sum_{a \in \Uu_{N,M},a_1\neq 1}\int_0^t \1_{\{s\in (\tau_a^{N,M},\zeta_a^{N,M}]\}}
\varphi(s,Y^{a,N,M,\e}_\sm) \dd \tilde \Lambda^a_s,
\end{align*}
where $\tilde \pi_t^{i,M}=\pi^{i,M}_t-\frac M\theta t$ and $\tilde \Lambda^a_t= \Lambda^a_t- \frac\lambda \theta t$.
\end{lemma}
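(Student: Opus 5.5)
We write $\nu_t=\nu^{1,N,M,\e}_t=\frac{1}{(N-1)M}\sum_{a\in\Uu_{N,M},a_1\neq 1}\1_{\{t\in[\tau_a,\zeta_a)\}}\delta_{Y^a_t}$, dropping superscripts $N,M,\e$. The plan is to apply Itô's formula particle by particle and then sum.

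\textbf{Step 1 (one particle).} Fix $a$ with $a_1\neq1$. On $[\tau_a,\zeta_a)$, $Y^a$ is a Brownian motion with diffusion coefficient $\sqrt{2\theta^{-1}}$. Hence Itô's formula gives, for $t\in[\tau_a,\zeta_a)$,
$$\varphi(t,Y^a_t)=\varphi(\tau_a,Y^a_{\tau_a})+\int_{\tau_a}^t\Big(\partial_t\varphi+\tfrac1\theta\Delta\varphi\Big)(s,Y^a_s)\dd s+\sqrt{\tfrac2\theta}\int_{\tau_a}^t\nabla\varphi(s,Y^a_s)\dd W^a_s .$$
The process $\1_{\{t\in[\tau_a,\zeta_a)\}}\varphi(t,Y^a_t)$ then decomposes into three parts:
\begin{itemize}
\item a jump $+\varphi(\tau_a,X^{a_1}_{\tau_a})$ at the birth time, or the initial value $\varphi(0,Y^a_0)$ if $a\in\Uu^0$;
\item the continuous Itô evolution on $[\tau_a,\zeta_a)$;
\item a jump $-\varphi(\zeta_a,Y^a_{\zeta_a-})$ at the death time.
\end{itemize}

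\textbf{Step 2 (Poisson compensation).} We rewrite the birth jumps as $\int_0^t\varphi(s,X^{i}_s)\dd\pi^{i,M}_s$ and compensate with intensity $\frac M\theta\dd s$. After summing over $i\neq1$ and normalizing by $(N-1)M$, the compensator produces the term $\frac1\theta\int_0^t\langle\mu^{1}_s,\varphi(s,\cdot)\rangle\dd s$, since $\mu^1_s=\frac1{N-1}\sum_{i\neq1}\delta_{X^i_s}$.

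The death time $\zeta_a$ is the first jump of $\Lambda^a$ after $\tau_a$. So the death jump equals $\int_0^t\1_{\{s\in(\tau_a,\zeta_a]\}}\varphi(s,Y^a_{s-})\dd\Lambda^a_s$. One checks that a jump of $\Lambda^a$ at exactly $\tau_a$ has probability $0$, since $\Lambda^a$ is independent of $\pi^{a_1}$. Compensating with $\frac\lambda\theta\dd s$ produces the term $-\frac\lambda\theta\int_0^t\langle\nu_s,\varphi\rangle\dd s$.

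\textbf{Step 3 (summation).} Summing over $a$ is legitimate because only finitely many particles exist up to time $t$. On each interval $[U_k,U_{k+1})$ the sum is finite, and $\pi^{i,M}_t<\infty$ a.s. Collecting the terms from Steps 1 and 2 yields the stated identity.

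The only mild points are the following. First, we must check that the compensated integrals are well defined, at least as local martingales. The integrands $\varphi(s,X^i_s)$ and $\1\,\varphi(s,Y^a_{s-})$ are predictable, being left-continuous in $s$ or continuous. Second, the stochastic integral for $a\in\Uu^1$ is driven by increments of $W^a$ after $\tau_a$. This is fine because $\tau_a$ is a stopping time for the joint filtration, and $W^a$ is independent of the Poisson clocks and of the cells.

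The identity itself is pathwise algebra, so no integrability beyond finiteness is needed. The main, though modest, care point is the filtration and predictability bookkeeping.
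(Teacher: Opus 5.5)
Your argument is correct and is essentially the paper's: the paper verifies the decompensated identity $H_t=H_0+I^1_t+I^2_t+I^3_t$ by induction over the ordered birth/death instants $U_n$, while you do the same bookkeeping particle by particle and then sum. In both, a birth jump matches the jump of the $\pi$-integral, a death jump matches that of the $\Lambda$-integral since a.s.\ no two clocks ring simultaneously, and Itô's formula is applied in between.

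One small correction concerns your justification of Itô's formula after $\tau_a$. $W^a$ is not independent of the cells, since the cells feel $Y^a$ through their drift. What you actually need is that $W^a$ is a Brownian motion for the filtration generated by all the driving noises. The whole system is adapted to that filtration, and $\tau_a$ is a stopping time for it.
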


\begin{proof}
We remove the superscripts $N,M$ and $N,M,\e$ for simplicity.
Multiplying the target formula by $N(M-1)$, {\it decompensating} and using the expressions 
of $\nu^1_t$ and $\mu^1_t$, we have to show that $H_t=H_0+I_t^1+I^2_t+I^3_t$, where
\begin{align*}
H_t=&\sum_{a\in \Uu_{N,M},a_1\neq 1} \1_{\{t \in [\tau_a,\zeta_a)\}}\varphi(t,Y^a_t),\\
I^1_t=& \sum_{a\in \Uu_{N,M},a_1\neq 1} \!\!\Big( \int_0^t \!\! \1_{\{s\in [\tau_a,\zeta_a)\}}
\Big(\partial_t\varphi(s,Y^a_s)\dd s+\frac1\theta
\Delta \varphi(s,Y^a_s) \dd s +\sqrt{\frac 2 \theta}\nabla\varphi(s,Y^{a}_s) \dd W^a_s\Big),\\
I^2_t=&\sum_{i\in I_N,i\neq 1}\int_0^t\varphi(s,X^{i}_s) \dd \pi_s^{i,M},\\
I^3_t=&-\sum_{a \in \Uu_{N,M},a_1\neq 1}\int_0^t \1_{\{s\in (\tau_a,\zeta_a]\}}
\varphi(s,Y^{a}_\sm) \dd \Lambda^a_s.
\end{align*}

We consider the discrete sets $\Ss_B=\{\tau_a : a \in \Uu_N^1\}$ of birth instants and
$\Ss_D= \{\zeta_a : a \in \Uu_{N,M}\}$ of death instants. We order all these instants chronologically,
i.e. we write $\Ss_B\cup \Ss_D=\{U_k : k\geq1\}$, with $0<U_1<U_2<\cdots$. We also set $U_0=0$.
\vip
Recall~\eqref{troc}. We have $\tau_a\geq U_1$ for all $a \in \Uu_N^1$, $\zeta_a\geq U_1$ for all 
$a \in \Uu_{N,M}$ and $Y^a_t=Y^a_0+\sqrt{2\theta^{-1}}W^a_t$ 
for all $a \in \Uu_{N,M}^0$, all $t\in [0,U_1)$. By Itô's formula, for
$t \in [0,U_1)$, 
$$
H_t=\!\!\!\!\sum_{a\in \Uu_{N,M}^0}\varphi(t,Y^a_t)=\!\!\!\!\sum_{a\in \Uu_{N,M}^0} \Big(\varphi(0,Y^a_0) 
+ \int_0^t \!\! \Big(\partial_t\varphi(s,Y^a_s)\dd s+\frac1\theta
\Delta \varphi(s,Y^a_s) \dd s+ \sqrt{\frac 2 \theta}
\nabla\varphi(s,Y^{a}_s) \dd W^a_s\Big).
$$
Since $s\in [\tau_a,\zeta_a)$ for all $s\in [0,U_1)$, all $a \in \Uu^0_{N,M}$, since $\tau_a\geq U_1$
for all $a \in \Uu^1_N$ and since $I^2_t=I^3_t=0$ for all $t\in [0,U_1)$,
we conclude that $H_t=H_0+I_t^1+I^2_t+I^3_t$ for $t\in [0,U_1)$.

\vip

We now assume that $H_t=H_0+I_t^1+I^2_t+I^3_t$ for $t\in [U_{n-1},U_n)$ and we prove that the same formula
holds on $[U_n,U_{n+1})$. If first $U_n \in \Ss_B$, i.e. if $U_n=\tau_b$ for some $b=(b_1,b_2) \in \Uu^1_N$,
then
$$
H_{U_n}=H_{U_n-}+\varphi(U_n,X^{b_1}_{U_n})=H_{U_n-}+\int_{\{s=U_n\}}\varphi(s,X^{b_1}_{s}) \dd\pi^{b_1,M}_s
=H_{U_n-}+\Delta I^2_{U_n},
$$
where $\Delta I^2_t=I^2_{t}-I^2_{t-}$. We used that
a.s., $\Delta \pi^{i,M}_{U_n}=0$ for all $i\neq b_1$. For the same reason, $\Delta I^3_{U_n}=0$ and, of course,
$\Delta I^1_{U_n}=0$. Since $H_{U_n-}=H_0+I^1_{U_n-}+I^2_{U_n-}+I^3_{U_n-}$ by induction assumption,
\begin{equation}\label{updc}
H_{U_n}=H_{U_n-}+\Delta I^2_{U_n}=H_0+I^1_{U_n}+I^2_{U_n}+I^3_{U_n}.
\end{equation}
If next $U_n \in \Ss_D$, i.e. if $U_n=\zeta_b$ for some $b \in \Uu_{N,M}$ (with necessarily
$\tau_b <U_n)$, then
$$
H_{U_n}=H_{U_n-}-\varphi(U_n,Y^b_{U_n-})=H_{U_n-}-\int_{\{s=U_n\}}\1_{\{s\in (\tau_b,\zeta_b]\}}\varphi(s,Y^{b}_\sm) \dd\Lambda^{b}_s
=H_{U_n-}+\Delta I^3_{U_n},
$$
since a.s., $\Delta\Lambda^a_{U_n}=0$ for all $a\neq b$. But $\Delta I^1_{U_n}=\Delta I^2_{U_n}=0$ a.s., so that
we also have~\eqref{updc}. 
\vip

Now we fix $t\in (U_n,U_{n+1})$ and use Itô's formula to write
\begin{align*}
H_t=&\!\!\!\!\sum_{a\in \Uu_{N,M},a_1\neq 1}\1_{\{t\in [\tau_a,\zeta_a)\}}\varphi(t,Y^a_t)\\
=&\!\!\!\!\sum_{a\in \Uu_{N,M},a_1\neq 1} \!\!\!\!\1_{\{t\in [\tau_a,\zeta_a)\}}\Big(\varphi(U_n,Y^a_{U_n}) 
\!+\! \int_{U_n}^t \!\! \Big(\partial_t\varphi(s,Y^a_s)\!+\!\frac1\theta
\Delta \varphi(s,Y^a_s)\Big) \dd s+
\sqrt{\frac 2 \theta}\int_{U_n}^t\!\!
\nabla\varphi(s,Y^{a}_s) \dd W^a_s\Big)\\
=&H_{U_n}+ I^1_t-I^1_{U_n},
\end{align*}
since for each $a \in \Uu_{N,M}$ such that $\tau_a\leq t<\zeta_a$, we have $\tau_a\leq U_n<U_{n+1}\leq \zeta_a$, 
whence $s\in [\tau_a,\zeta_a)$ for all $s \in (U_n,t)$. Since moreover
$I^2_t=I^2_{U_n}$ and $I^3_t=I^3_{U_n}$, we end with 
$$
H_t=H_{U_n}+I^1_t-I^2_{U_n}+I^2_t-I^1_{U_n}+I^3_t-I^3_{U_n}.
$$
By~\eqref{updc}, we end with $H_t=H_0+I^1_t+I^2_t+I^3_t$ as desired.
\end{proof}

\section{From chemotactic attraction to path-guided attraction in 2SKS}
\label{fcat}

In this section, we fix $\rho_0 \in \Pp(\R^2)$ and $c_0 \in L^\infty(\R^2)\cap L^1(\R^2)$,
as well as $\chi>0$, $\lambda>0$ and $\theta>0$. We consider, for each integer $N\geq 2$, each $M>0$, 
each $\e\in (0,1]$, the $(\rho_0,c_0,N,M,\e)$-2SKS particle system $(X^{i,N,M,\e}_t)_{t\geq 0, i \in I_N}$
together with all the associated random objects.
Our goal is to prove the following proposition which shows that
the dynamics of the 2SKS system is not far from that of the 1SKS system: The drift of the cells in~\eqref{troc}
resembles the drift in~\eqref{PS}.

\begin{proposition}\label{prel}
Consider the process $(U^{1,N,M,\e}_t)_{t\geq 0}$ defined by
\begin{align*}
U^{1,N,M,\e}_t=&(\nu^{1,N,M,\e}_t\ast \nabla \phi_{\e})(X^{1,N,M,\e}_t)-(\nabla \phi_{\e}\ast b^{c_0,\theta,\lambda}_t)(X^{1,N,M,\e}_t)\\
&-\frac1{N-1} \sum_{j \in I_N,j\neq 1}\int_0^t (\nabla \phi_\e\ast K^{\theta,\lambda}_{t-s})(X^{1,N,M,\e}_t-X^{j,N,M,\e}_s)\dd s.
\end{align*}
For any $\sigma\in (0,\frac12]$, any $T>0$, there exists a constant
$A_{\sigma,T}$ (also depending on $\lambda,\theta,c_0$) such that for all $N\geq 2$, all $M>0$, all $\e\in (0,1]$,
$$
\sup_{t\in [0,T]}\E[|U^{1,N,M,\e}_t|^2] \leq \frac{A_{\sigma,T}}{(NM \e^{\frac12})^{2}}
+\frac{A_{\sigma,T}}{NM \e^{3+\sigma}}+\frac{A_{\sigma,T}}{NM^{1+\sigma}\e^{2+\sigma}}.
$$
\end{proposition}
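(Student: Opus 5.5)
The plan is to apply the Itô-type formula of Lemma~\ref{itonu} with a time-dependent test function that inverts the chemoattractant dynamics, that is, a Duhamel representation. Fix $t\in(0,T]$ and $x\in\R^2$, and set
$$\varphi(s,y)=\theta\,(\nabla\phi_\e\ast K^{\theta,\lambda}_{t-s})(x-y),\qquad s\in[0,t].$$
This function solves the backward equation $\partial_s\varphi+\frac1\theta(\Delta\varphi-\lambda\varphi)=0$ on $[0,t)$, with $\varphi(t,\cdot)=\nabla\phi_\e(x-\cdot)$; after the factor $\theta$, this should be $\theta K_0=\delta$ in the limit. Lemma~\ref{itonu} applies on $[0,t-\delta]$, and we then let $\delta\to0$, which is harmless since $\phi_\e$ is smooth. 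The drift term vanishes, and we get
\begin{align*}
(\nu^1_t\ast\nabla\phi_\e)(x)=&\;\theta\langle\nu^1_0,\nabla\phi_\e\ast K_t(x-\cdot)\rangle+\frac1{N-1}\sum_{j\neq1}\int_0^t(\nabla\phi_\e\ast K_{t-s})(x-X^j_s)\,\dd s\\
&+\mathcal M^W_t(x)+\mathcal M^\pi_t(x)+\mathcal M^\Lambda_t(x),
\end{align*}
where $\mathcal M^W,\mathcal M^\pi,\mathcal M^\Lambda$ are the three stochastic integrals of Lemma~\ref{itonu} evaluated at this $\varphi$. The second term is exactly the path term in $U^1$. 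The first term differs from $(\nabla\phi_\e\ast b_t)(x)$ only by the initial fluctuation together with the rounding error in $L_{N,M}$.

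The main difficulty is that we need to evaluate at $x=X^1_t$, which is random and depends on everything. I would first handle the case where $x$ is replaced by a point independent of the noises driving the processes indexed by $j\neq1$. More usefully, note that $\nu^1$ excludes the particles produced by cell $1$, but $X^1$ still depends on all of them. To decouple, I would bound
$$\E\Big[\sup_{x}|\mathcal M_t(x)|^2\Big]\quad\text{or}\quad\E\Big[\int|\mathcal M_t(x)|^2w(x)\,\dd x\Big]$$
via a Sobolev embedding in $x$, namely $H^{1+\sigma}\subset L^\infty$ in 2D. That is presumably where the exponent $\sigma$ and the extra powers of $\e$ appear: each derivative of $\phi_\e$ costs $\e^{-1/2}$, and $\|D^k\phi_\e\|_{L^2}^2\sim\e^{-1-k}$.

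The individual terms are then estimated as follows.

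\textbf{Initial fluctuation.} The variance of $\frac1{(N-1)M}\sum_{a\in\Uu^0}[\theta\nabla\phi_\e\ast K_t(x-Y^a_0)-\text{mean}]$ is at most $\frac{C}{NM}\|\nabla\phi_\e\ast\theta K_t\|^2_{L^2(c_0)}\lesssim \frac{\|c_0\|_\infty}{NM}\|\nabla\phi_\e\|_{L^2}^2$, which is of order $(NM\e^2)^{-1}$ before paying for the sup in $x$. The rounding error in $L_{N,M}$ gives the $(NM\e^{1/2})^{-2}$ term, since $\|\nabla\phi_\e\|_\infty\sim\e^{-3/2}$ and $\theta K_t\ast\nabla\phi_\e$ is bounded by $\e^{-3/2}$. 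I would rather use $|\nabla(\phi_\e\ast g)|$, which gives $\e^{-1/2}$.

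\textbf{Martingale terms $\mathcal M^\pi$ and $\mathcal M^\Lambda$.} Their brackets are
$$\frac{1}{(N-1)^2M^2}\cdot\frac M\theta\sum_j\int_0^t|\varphi(s,X^j_s)|^2\,\dd s$$
and its analogue with $\lambda/\theta$ and the live particles, whose expected number is $\lesssim NM$. Using $|\varphi(s,\cdot)|\le\|\nabla\phi_\e\ast K_{t-s}\|_\infty\lesssim\min(\e^{-3/2},(t-s)^{-3/2})$, the time integral gives $\e^{-2}$, so these terms are $\lesssim (NM\e^2)^{-1}$.

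\textbf{Brownian term $\mathcal M^W$.} The same argument with $\nabla\varphi$ gives $\int_0^t\min(\e^{-2},(t-s)^{-2})\,\dd s\sim\e^{-1}$, hence also a term of order $(NM\e^{2})^{-1}$ at fixed $x$.

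\textbf{Evaluation at $X^1_t$.} This is the step I expect to be the main obstacle, and it should produce $\e^{-3-\sigma}$ and the term $NM^{1+\sigma}\e^{2+\sigma}$. Replacing the fixed $x$ by the random point $X^1_t$ through the Sobolev sup costs roughly $\e^{-1-\sigma}$ extra, giving $(NM\e^{3+\sigma})^{-1}$. The term $M^{1+\sigma}$ should come from the jump martingales: their large-jump contribution, of size $\e^{-3/2}/(NM)$ per jump, would need a moment bound of order $2+2\sigma$ or an interpolation between $L^2$ and $L^\infty$ bounds. That is where Lemma~\ref{lemma:FI}-type interpolation, or a direct $L^{p}$ Burkholder estimate, would be tried first.
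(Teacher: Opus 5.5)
You have the same decomposition as the paper: Lemma~\ref{itonu} applied to $\varphi^\e_{t,x}(s,y)=\theta(\nabla\phi_\e\ast K^{\theta,\lambda}_{t-s})(x-y)$. You also have the same idea for the evaluation at $X^1_t$, namely bounding $\E[\|\cdot\|_{L^\infty_x}^2]$ of each random field through an embedding in $x$. The paper implements this with three ingredients:
\begin{itemize}
\item the Gagliardo--Nirenberg inequality $\|u\|_\infty\le A_r\|\nabla u\|_{L^r}^{2/r}\|u\|_{L^r}^{1-2/r}$ with $r=\frac2{1-\sigma}$;
\item Burkholder--Davis--Gundy in $L^r$ at fixed $x$;
\item integration in $x$ via Lemma~\ref{tri}.
\end{itemize}
An $L^2$-based embedding, as you suggest, is a legitimate alternative.

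\textbf{The gap.} It sits exactly at the step you flag. A Sobolev norm in $x$ requires the \emph{$x$-integrated} second moments. Your fixed-$x$ bounds, built from sup norms of the kernel, are not those quantities. Write $G_u=\theta\nabla\phi_\e\ast K^{\theta,\lambda}_u$ (so $\varphi^\e_{t,x}(s,y)=G_{t-s}(x-y)$), and let $L_t$ be the number of chemoattractant particles born by time $t$, so $\E[L_t]\lesssim NM$. Integrating the isometries in $x$ gives
\begin{gather*}
\E\|D^k\mathcal M^\pi_t\|_{L^2}^2=\frac{1}{(N-1)M\theta}\int_0^t\|D^kG_u\|_{L^2}^2\,\dd u\lesssim\frac{1}{NM\e^{1+k}},\\
\E\|D^k\mathcal M^W_t\|_{L^2}^2\lesssim\frac{\E[L_t]}{(NM)^2}\int_0^t\|D^{k+1}G_u\|_{L^2}^2\,\dd u\lesssim\frac{1}{NM\e^{2+k}}.
\end{gather*}
These are easiest in Fourier, using $\widehat{\phi_\e}(\xi)=\hat\phi(\sqrt\e\xi)$ and $\int_0^\infty e^{-2u|\xi|^2/\theta}\dd u=\frac{\theta}{2|\xi|^2}$. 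The bound for $\mathcal M^\pi$ also holds for $\mathcal M^\Lambda$, and the initial fluctuation satisfies $\E\|D^k\Delta^2_t\|_{L^2}^2\lesssim (NM)^{-1}\e^{-2-k}$.

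Take $k\in\{0,2\}$; this needs $D^3\phi\in L^2$, which Assumption~\ref{mol} provides. Then either interpolate into $H^{1+\sigma}\subset L^\infty$ or use Agmon's inequality $\|u\|_\infty^2\lesssim\|u\|_{L^2}\|D^2u\|_{L^2}$. Every term is then bounded by $A_\sigma/(NM\e^{3+\sigma})$.

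Your heuristic that ``the sup costs $\e^{-1-\sigma}$'' is valid only on these $L^2_x$ quantities; applied to a correct fixed-$x$ bound it overshoots. For example, the fixed-$x$ sup bound for $\mathcal M^W$ is $\frac1{NM}\int_0^t\min(\e^{-4},(t-s)^{-4})\,\dd s\sim (NM\e^3)^{-1}$, not $(NM\e^2)^{-1}$. The bound $|\nabla_y\varphi|\lesssim\min(\e^{-2},(t-s)^{-2})$ has to be squared.

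\textbf{The third term.} Your account of the term $NM^{1+\sigma}\e^{2+\sigma}$ is misdirected, and Lemma~\ref{lemma:FI} plays no role in this proof. In the paper that term comes from working with $r$-th moments, $r=\frac2{1-\sigma}>2$, which is forced because $W^{1,2}\not\subset L^\infty$ in dimension $2$. Step~4 then gives a factor $M+M^{r/2}$ for the Poisson integrals. For $M<1$ the single-jump regime dominates and yields the denominator $NM^{2-\frac2r}\e^{2+\sigma}=NM^{1+\sigma}\e^{2+\sigma}$ in Step~5.

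In an $L^2$ framework the isometry sees exactly the intensity $\frac M\theta$. Hence $\E\|\mathcal M^\pi_t\|^2_\infty\lesssim (NM\e^2)^{-1}$ uniformly in $M>0$, and the third term never appears. You would prove a slightly stronger bound than stated, which is fine. So what is missing is the $x$-integrated isometry computation above, not a moment bound of order $2+2\sigma$.

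By comparison, the paper's $L^r$ route needs only the $L^r$ kernel bounds of Lemma~\ref{tri}. It pays for this with the $M^{1+\sigma}$ term and an extra factor $\e^{-\sigma}$ in Steps~3, 5 and~6.

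Minor point: in the rounding term, the factor $\e^{-1/2}$ comes from $\|c_0\|_\infty\|\nabla(\phi_\e\ast g_{2t/\theta})\|_{L^1}\lesssim(\e+t)^{-1/2}$, see~\eqref{rmu}, not from sup norms of $\nabla\phi_\e$.
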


\begin{proof}
We fix $\sigma\in (0,\frac12]$ and we set $r= \frac2{1-\sigma}\in (2,4]$. We remove the superscripts $N,M$ and $N,M,\e$ for simplicity.

\vip
{\it Step 1.} We fix $t>0$ and $x\in \R^2$ and we apply Lemma~\ref{itonu} (at time $t$) to the function
$$
\varphi^\e_{t,x}(s,y)=\theta(\nabla \phi_\e \ast K^{\theta,\lambda}_{t-s})(x-y), 
$$
of which the properties are given in Lemma~\ref{tri}.
Observe that $\langle \nu^1_t,\varphi^\e_{t,x}(t,\cdot)\rangle=
(\nu^1_t\ast \nabla \phi_\e)(x)$ because $\varphi^\e_{t,x}(t,y)=\nabla \phi_\e(x-y)$, see~\eqref{tri2}
and that $\partial_s \varphi^\e_{t,x}+\frac1\theta(\Delta_y\varphi^\e_{t,x}-\lambda\varphi^\e_{t,x})=0$, see 
also~\eqref{tri2}. We find
\begin{align}\label{tbua}
(\nu^1_t\ast \nabla \phi_\e)(x)=& I_t(x)+ J_t(x)+M^1_t(x)+M^2_t(x)+M^3_t(x),
\end{align}
where
\begin{align*}
I_t(x)=&\langle \nu^1_0,\varphi^\e_{t,x}(0,\cdot)\rangle,\\
J_t(x)=&\frac1\theta \int_0^t\langle \mu^1_s,\varphi^\e_{t,x}(s,\cdot)\rangle \dd s
=\frac1{N-1}\sum_{j\in I_N,j\neq 1}\int_0^t  (\nabla \phi_{\e}\ast K^{\theta,\lambda}_{t-s})(x-X^{j}_s)\dd s,\\
M^1_t(x)=&\frac1{(N-1)M}\sum_{a \in \Uu_{N,M},a_1\neq 1}\sqrt{\frac 2 \theta}\int_0^t \1_{\{s\in [\tau_a,\zeta_a)\}} 
\nabla_y\varphi^\e_{t,x}(s,Y^{a}_s) \dd W^a_s,\\
M^2_t(x)=&\frac1{(N-1)M}\sum_{i\in I_N,i\neq 1}\int_0^t \varphi^\e_{t,x}(s,X^{i}_s) \dd \tilde \pi_s^{i,M},\\
M^3_t(x)=&-\frac1{(N-1)M}\sum_{a \in \Uu_{N,M},a_1\neq 1}\int_0^t \1_{\{s\in (\tau_a,\zeta_a]\}} 
\varphi^\e_{t,x}(s,Y^{a}_\sm) \dd \tilde \Lambda^a_s.
\end{align*}
Applying this formula with $x=X^1_t$, we conclude that
$$
U^1_t=\Delta_t(X^1_t)+M^1_t(X^1_t)+M^2_t(X^1_t)+M^3_t(X^1_t), \quad \text{where} \quad
\Delta_t(x)= I_t(x)- (\nabla \phi_\e\ast b^{c_0,\theta,\lambda}_t)(x).
$$
We next write $\Delta_t(x)=\Delta^1_t(x)+\Delta^2_t(x)$, where
\begin{align*}
\Delta^1_t(x)=&\Big(\frac{L_{N,M}}{(N-1)M||c_0||_{L^1(\R^2)}}-1\Big) \nabla (\phi_\e\ast b^{c_0,\theta,\lambda}_t)(x),\\
\Delta^2_t(x)=&I_t(x) 
- \frac{L_{N,M}}{(N-1)M||c_0||_{L^1(\R^2)}} \nabla (\phi_\e\ast b^{c_0,\theta,\lambda}_t)(x).
\end{align*}
To complete the proof, we will show that there exists a constant
$A_{\sigma,T}$ such that
\begin{gather*}
\sup_{t\in [0,T]} ||\Delta^1_t||_{L^\infty(\R^2)}^2 \leq \frac{A_{\sigma,T}}{(NM\e^{\frac12})^2},\\
\sup_{t\in[0,T]}\E\Big[\sum_{k=1}^3||M^k_t||_{L^\infty(\R^2)}^2+||\Delta^2_t||_{L^\infty(\R^2)}^2\Big]
\leq \frac{A_{\sigma,T}}{NM\e^{3+\sigma}}+  \frac{A_{\sigma,T}}{NM^{1+\sigma}\e^{2+\sigma}} .
\end{gather*}
To study $||M^k_t||_{L^\infty(\R^2)}$ and $||\Delta^2_t||_{L^\infty(\R^2)}$, we will use a variant of the methods of~\cite{ORT}, see also~\cite{CRT}. Recall the following Gagliardo-Nirenberg inequality: 
For any $r>2$, there exists $A_r$ such that for all $u:\R^2 \to \R$,
\begin{equation}\label{gn}
||u||_\infty \leq A_r ||\nabla u||_{L^r(\R^2)}^{\frac 2 r}||u||_{L^r(\R^2)}^{1-\frac 2 r},
\end{equation}
see e.g. Cazenave~\cite[Theorem~1.3.7]{Caz}
with $N=2$, $j=0$, $m=1$, $p=\infty$, $q=r>2$, $r=r$ and $a=\frac 2 r$.
\vip

{\it Step 2.} Recalling that $L_{N,M}=\lfloor (N-1)M||c_0||_{L^1(\R^2)}\rfloor$ and that
$N-1\geq \frac N2$, there exists a constant $A$ (depending on $c_0,\theta$) such that, see~\eqref{rmu},
$$
||\Delta_t^1||_{L^\infty(\R^2)}\leq \frac{1}{(N-1)M||c_0||_{L^1(\R^2)}} 
||\nabla(\phi_\e \ast b_t^{c_0,\theta,\lambda})||_{L^\infty(\R^2)} \leq 
\frac{A}{NM\e^{\frac 1 2}}.
$$

\vip

{\it Step 3.} Here we study $||M^1_t||_{L^\infty(\R^2)}$.
Recall that $r>2$. By independence of the Brownian motions $W^a$, by the 
Burkholder-Davies-Gundy inequality and since $N-1\geq \frac N2$,
\begin{align*}
\E[|\nabla M^1_t(x)|^r] \leq& \frac{A_r}{(MN)^r}\E\Big[\Big(\sum_{a \in \Uu_{N,M},a_1\neq 1}\int_0^t
|\nabla_x\nabla_y\varphi^\e_{t,x}(s,Y^{a}_s)|^2\1_{\{s \in [\tau_a,\zeta_a)} \dd s\Big)^{\frac r2}\Big]\\
 \leq& \frac{A_r}{(MN)^r}\E\Big[\Big(\sum_{a \in \Uu_{N,M}}\1_{\{\tau_a\leq t\}} \int_0^t 
|\nabla_x\nabla_y\varphi^\e_{t,x}(s,Y^{a}_s)|^2\dd s\Big)^{\frac r2}\Big]\\
\leq & \frac{A_{r,T}}{(MN)^r}\E\Big[\Big(\sum_{a \in \Uu_{N,M}} \1_{\{\tau_a\leq t\}}\int_0^t
|\nabla_x\nabla_y\varphi^\e_{t,x}(s,Y^{a}_s)|^r \dd s\Big) L_t^{\frac r2-1}\Big],
\end{align*}
where $L_t=\sum_{a \in \Uu_{N,M}}\1_{\{\tau_a \leq t\}}$.
Integrating in $x$, we deduce from~\eqref{estp2} that for all $t\in [0,T]$,
$$
\E[||\nabla M^1_t||_{L^r(\R^2)}^r]\leq \frac{A_{r,T}}{(MN)^r\e^{\frac{5r}2-2}}\E[L_T^{\frac r2}]
\leq \frac{A_{r,T}}{(MN)^{\frac r 2}\e^{\frac{5r}2-2}}.
$$
We used that $L_T=L_{N,M}+Z_T$, with $L_{N,M}=\sum_{a \in \Uu_{N,M}^0}\1_{\{\tau_a \leq t\}}=
\lfloor (N-1)M||c_0||_{L^1(\R^2)}\rfloor$ and where 
$Z_T=\sum_{a \in \Uu_{N}^1}\1_{\{\tau_a \leq t\}}$ follows 
a Poisson$(\frac{TNM}\theta)$-distribution, whence  $\E[L_T^{\frac r 2}]\leq A_{r,T} (NM)^{\frac r2}$.
By the very same arguments, using~\eqref{estp1} instead of~\eqref{estp2},
$$
\E[||M^1_t||_{L^r(\R^2)}^r] \leq \frac{A_{r,T}}{(MN)^{\frac r 2}\e^{2r-2}}.
$$
Using now~\eqref{gn} and the Hölder inequality with $p=\frac {r^2}{4}$, $p'=\frac {r^2}{2(r-2)}$ and 
$p''=\frac {r}{r-2}$ (recall that $r>2$), we get, for all $t\in [0,T]$,
\begin{align*}
\E[||M^1_t||_{L^\infty(\R^2)}^2]\leq& A_r\E\Big[||\nabla M^1_t||_{L^r(\R^2)}^{\frac{4}r}
||M^1_t||_{L^r(\R^2)}^{2-\frac{4}r} \times 1\Big]\\
\leq& A_r \E\Big[||\nabla M^1_t||_{L^r(\R^2)}^r\Big]^{\frac{4}{r^2}}
\E\Big[||M^1_t||_{L^r(\R^2)}^r\Big]^{\frac{2(r-2)}{r^2}}.
\end{align*}
Using the previous estimates, this precisely gives (recall that $r=\frac2{1-\sigma}$)
\begin{equation*}
\sup_{t \in [0,T]}\E[||M^1_t||_{L^\infty(\R^2)}^2]\leq \frac{A_{r,T}}{MN\e^{4-\frac 2 r}}
=\frac{A_{\sigma,T}}{MN\e^{3+\sigma}}.
\end{equation*}

{\it Step 4.} Here we prove that for $a\in [1,2]$, there exists $A_a>0$ such that for $(N_t)_{t\geq 0}$ 
a Poisson process with parameter $\lambda>0$, for 
$(\alpha_t)_{t\geq 0}$ a nonnegative predictable process and for $H_t=\int_0^t \alpha_s\dd N_s$,
$$
\E[H_t^a]\leq A_a \lambda (1+(\lambda t)^{a-1})\int_0^t \E[\alpha_s^a]\dd s.
$$
To this end, we write $H_t=D_t+M_t$, where $D_t=\lambda\int_0^t \alpha_s \dd s$
and $M_t=\int_0^t \alpha_s \dd \tilde N_s$. By the Hölder inequality, 
$\E[D_t^a]\leq \lambda^a t^{a-1} \int_0^t \E[\alpha_s^a] \dd s$. By the Burkholder-Davies-Gundy inequality,
$$
\E[|M_t|^a]\leq A_a \E\Big[\Big(\int_0^t \alpha_s^2 \dd N_s\Big)^{\frac a2}\Big]
\leq A_a \E\Big[\int_0^t \alpha_s^a \dd N_s\Big]=A_a \lambda \int_0^t \E[\alpha_s^a] \dd s.
$$
We used that $\frac a2 \in [0,1]$ for the second inequality. The conclusion follows.

\vip

{\it Step 5.} We now study $||M^2_t||_{L^\infty(\R^2)}$. Since $N-1\geq \frac N2$, 
by independence of the Poisson measures $\pi^{i,M}$ and
by the Burkholder-Davies-Gundy inequality,
\begin{align*}
\E[|\nabla M^2_t(x)|^r]\leq& \frac{A_r}{(NM)^r} \E\Big[\Big(\sum_{i \in I_N, i\neq 1}\int_0^t
|\nabla_x \varphi^\e_{t,x}(s,X^i_s)|^2 \dd \pi^{i,M}_s \Big)^{\frac r2}\Big]\\
\leq &  \frac{A_r N^{\frac r 2 -1}}{(NM)^r} \E\Big[\sum_{i \in I_N} \Big(\int_0^t
|\nabla_x \varphi^\e_{t,x}(s,X^i_s)|^2 \dd \pi^{i,M}_s \Big)^{\frac r2}\Big]\\
=&  \frac{A_r N^{\frac r 2}}{(NM)^r} \E\Big[\Big(\int_0^t
|\nabla_x \varphi^\e_{t,x}(s,X^1_s)|^2 \dd \pi^{1,M}_s \Big)^{\frac r2}\Big]
\end{align*}
by exchangeability. We now use Step~4, recalling that $r \in (2,4)$, that $(\pi^{1,M}_t)_{t\geq 0}$ has for parameter
$\frac M \theta$ and noting that $\frac M \theta(1+(\frac{Mt}\theta)^{\frac r2-1}))\leq A_T (M+M^{\frac r 2})$
for $t\in [0,T]$.
We get, for all $t\in [0,T]$,
$$
\E[|\nabla M^2_t(x)|^r]\leq \frac{A_{r,T} N^{\frac r 2} (M+M^{\frac r 2})}{(NM)^r} \E\Big[\int_0^t
|\nabla_x \varphi^\e_{t,x}(s,X^1_s)|^r \dd s\Big].
$$
Integrating in $x$ and using~\eqref{estp1}, we find
$$
\E[||\nabla M^2_t||_{L^r(\R^2)}^r] \leq \frac{A_{r,T} N^{\frac r 2}(M+M^{\frac r 2})}{(NM)^{r} \e^{2r-2}}
\leq  \frac{A_{r,T} }{(N[M\land M^{2-\frac2r} ])^{\frac r2} \e^{2r-2}}
$$
We check similarly that 
$$
\E[||M^2_t||_{L^r(\R^2)}^r] \leq \frac{A_{r,T}}{(N[M\land M^{2-\frac2r} ])^{\frac r 2} \e^{\frac {3r}2-2}}.
$$
Exactly as in Step~3, 
\begin{align*}
\E[||M^2_t||_{L^\infty(\R^2)}^2]\leq A_{r} \E\Big[||\nabla M^2_t||_{L^r(\R^2)}^r\Big]^{\frac{4}{r^2}}
\E\Big[||M^2_t||_{L^r(\R^2)}^r\Big]^{\frac{2(r-2)}{r^2}}.
\end{align*}
As a conclusion,
\begin{equation*}
\sup_{t\in [0,T]}\E[||M^2_t||_{L^\infty(\R^2)}^2]\leq \frac{A_{r,T}}{N[M\land M^{2-\frac2r} ]\e^{3-\frac 2 r}}
\leq \frac{A_{r,T}}{NM\e^{3-\frac 2 r}}+ \frac{A_{r,T}}{NM^{2-\frac 2r}\e^{3-\frac 2r}}.
\end{equation*}
Since $r=\frac 2{1-\sigma}$, this gives
\begin{equation*}
\sup_{t\in [0,T]}\E[||M^2_t||_{L^\infty(\R^2)}^2]
\leq \frac{A_{\sigma,T}}{NM\e^{2+\sigma}}+ \frac{A_{\sigma,T}}{NM^{1+\sigma}\e^{2+\sigma}}
\leq \frac{A_{\sigma,T}}{NM\e^{3+\sigma}}+ \frac{A_{\sigma,T}}{NM^{1+\sigma}\e^{2+\sigma}}.
\end{equation*}

{\it Step 6.} Here we study $||M^3_t||_{L^\infty(\R^2)}$. By the Burkholder-Davies-Gundy inequality,
\begin{align*}
\E[|\nabla M^3_t(x)&|^r]\leq \frac{A_r}{(NM)^r} \E\Big[\Big(\sum_{a \in \Uu_{N,M}}\int_0^t
|\nabla_x \varphi^\e_{t,x}(s,Y^a_\sm)|^2 \1_{\{s \in (\tau_a,\zeta_a]\}} \dd \Lambda^a_s \Big)^{\frac r2}\Big]\\
\leq &  \frac{A_r}{(NM)^r} \E\Big[\Big(\sum_{a \in \Uu_{N,M}}\int_0^t
\1_{\{s \in (\tau_a,\zeta_a]\}}\dd \Lambda^a_s \Big)^{\frac r2-1}  \sum_{a \in \Uu_{N,M}}\int_0^t
|\nabla_x \varphi^\e_{t,x}(s,Y^a_\sm)|^r \1_{\{s \in (\tau_a,\zeta_a]\}}\dd \Lambda^a_s \Big].
\end{align*}
Integrating in $x$ thanks to~\eqref{estp0}, we find
\begin{align*}
\E[||\nabla M^3_t||_{L^r(\R^2)}^r]\leq& \frac{A_r}{(NM)^r} \E\Big[\Big(\sum_{a \in \Uu_{N,M}}\int_0^t
\1_{\{s \in (\tau_a,\zeta_a]\}}\dd \Lambda^a_s \Big)^{\frac r2-1}  \sum_{a \in \Uu_{N,M}}\int_0^t
\frac {\1_{\{s \in (\tau_a,\zeta_a]\}}}{(\e+(t-s))^{2r-1}} \dd \Lambda^a_s \Big].
\end{align*}
By definition of $\tau_a$ and $\zeta_a$, we have 
$\int_0^t\1_{\{s \in (\tau_a,\zeta_a]\}}\dd \Lambda^a_s\leq \1_{\{\tau_a\leq t\}}$. Thus
\begin{align*}
\E[||\nabla M^3_t||_{L^r(\R^2)}^r]\leq& \frac{A_r}{(NM)^r} \E\Big[\Big(\sum_{a \in \Uu_{N,M}} \1_{\{\tau_a \leq t\}} 
\Big)^{\frac r2-1}  
\sum_{a \in \Uu_{N,M}}\1_{\{\tau_a\leq t\}} \int_0^t \frac {1}{(\e+(t-s))^{2r-1}} \dd \Lambda^a_s \Big].
\end{align*}
Using now that $((\Lambda^a_t)_{t\geq 0} : a \in \Uu_{N,M})$ is independent of $(\tau_a :  a \in \Uu_{N,M})$
and that $(\Lambda^a_t)_{t\geq 0}$ has for parameter $\frac\lambda\theta$, we get
\begin{align*}
\E[||\nabla M^3_t||_{L^r(\R^2)}^r]\leq& \frac{A_{r}}{(NM)^r} 
\E\Big[\Big(\sum_{a \in \Uu_{N,M}} \1_{\{\tau_a\leq t\}} \Big)^{\frac r2-1}\sum_{a \in \Uu_{N,M}}\1_{\{\tau_a\leq t\}} 
\int_0^t \frac {\dd s}{(\e+(t-s))^{2r-1}} \Big].
\end{align*}
Thus for all $t\in [0,T]$, since $2r-1>1$,
\begin{align*}
\E[||\nabla M^3_t||_{L^r(\R^2)}^r]\leq& \frac{A_r}{(NM)^r\e^{2r-2}} 
\E\Big[\Big(\sum_{a \in \Uu_{N,M}} \1_{\{\tau_a\leq t\}} \Big)^{\frac r2}\Big] \leq 
\frac{A_{r,T}}{(NM)^{\frac r2}\e^{2r-2}}, 
\end{align*}
since we have already seen in Step~3 that 
$\E[(\sum_{a \in \Uu_{N,M}} \1_{\{\tau_a\leq t\}})^{\frac r2}]\leq A_{r,T} (MN)^{\frac r 2}$. One finds similarly
\begin{align*}
\E[||M^3_t||_{L^r(\R^2)}^r]\leq  \frac{A_{r,T}} {(NM)^{\frac r2}\e^{\frac{3r}2-2}}, 
\end{align*}
and we conclude as in Step~5 (replacing $[M \land M^{2-\frac2r}]$ by $M$) that
\begin{equation*}
\sup_{t\in [0,T]}\E[||M^3_t||_{L^\infty(\R^2)}^2]\leq \frac{A_{r,T}}{NM\e^{3-\frac 2 r}}
= \frac{A_{\sigma,T}}{NM\e^{2+\sigma}}\leq\frac{A_{\sigma,T}}{NM\e^{3+\sigma}}.
\end{equation*}

{\it Step 7.} We finally study $||\Delta_t^2||_{L^\infty(\R^2)}$. Since
$Y^a_0\sim ||c_0||^{-1}_{L^1(\R^2)}c_0(y)\dd y$ for all $a \in \Uu^0_{N,M}$ and
since $\varphi^\e_{t,x}(0,y)= \theta (\nabla \phi_\e \ast K_t^{\theta,\lambda})(x-y)$, we have
$$
\E[\varphi^\e_{t,x}(0,Y^a_0)]=||c_0||_{L_ 1(\R^2)}^{-1}  \theta (\nabla \phi_\e \ast K^{\theta,\lambda}_t\ast c_0)(x)
= ||c_0||_{L_ 1(\R^2)}^{-1} \nabla (\phi_\e\ast b^{c_0,\theta,\lambda}_t)(x), 
$$
recall~\eqref{gkb}.
Since moreover $\nu^1_0=\frac{1}{(N-1)M}\sum_{a\in \Uu_{N,M}^0}\delta_{Y^a_0}$  with $\# (\Uu_{N,M}^0)= L_{N,M}$, we find
\begin{align*}
\Delta^2_t(x)=&\frac1{(N-1)M}\sum_{a \in \Uu_{N,M}^0} \varphi^\e_{t,x}(0,Y^a_0)- \frac{L_{N,M}}{(N-1)M||c_0||_{L^1(\R^2)}} 
\nabla (\phi_\e\ast b^{c_0,\theta,\lambda}_t)(x)\\
=& \frac1{(N-1)M}\sum_{a \in \Uu_{N,M}^0} Z^a_t(x),
\end{align*}
where $Z^a_t(x)=\varphi^\e_{t,x}(0,Y^a_0)-\E[\varphi^\e_{t,x}(0,Y^a_0)]$.
Since $\E[Z^a_t(x)]=0$ for all $x \in \R^2$, it also holds true that $\E[\nabla Z^a_t(x)]=0$. 
We now fix $r>2$.
By the discrete 
Burkholder-Davies-Gundy inequality and since the family $(\nabla Z^a_t(x))_{a \in \Uu_{N,M}^0}$ is i.i.d.,
\begin{align*}
\E[|\nabla \Delta^2_t(x)|^r]\leq & \frac{A_r (L_{N,M})^{\frac r 2}}{(NM)^r} \E[|\nabla Z^{(1,0)}_t(x)|^r].
\end{align*}
But for any random variable $Y$ with mean $m$, $\E[|Y-m|^r]\leq A_r (\E[|Y|^r]+m^r)\leq A_r \E[|Y|^r]$.
Since moreover $L_{N,M}\leq A NM$,
\begin{align*}
\E[|\nabla \Delta^2_t(x)|^r]\leq & \frac{A_{r}}{(NM)^{\frac r2}} \int_{\R^2} |\nabla_x \varphi^\e_{t,x}(0,y)|^rc_0(y)\dd y.
\end{align*}
Integrating in $x$ and using~\eqref{estp0} and that $c_0 \in L^1(\R^2)$, we find
$$
\E[||\nabla \Delta^2_t||_{L^r(\R^2)}^r]\leq  \frac{A_r}{(NM)^{\frac r2} (t+\e)^{2r-1}}\leq  
\frac{A_r}{(NM)^{\frac r2} \e^{2r-1}}.
$$
One finds similarly
$$
\E[||\Delta^2_t||_{L^r(\R^2)}^r]\leq  \frac{A_r}{(NM)^{\frac r2} \e^{\frac{3r}2-1}}.
$$
As in Step~3, we get
$$
\E[||\Delta^2_t||_{L^\infty(\R^2)}^2]\leq A_r \E\Big[||\nabla \Delta^2_t||_{L^r(\R^2)}^r\Big]^{\frac{4}{r^2}}
\E\Big[||\Delta^2_t||_{L^r(\R^2)}^r\Big]^{\frac{2(r-2)}{r^2}}.
$$
This gives
\begin{equation*}
\E[||\Delta^2_t||_{L^\infty(\R^2)}^2]
\leq \frac{A_r}{NM \e^3}\leq \frac{A_\sigma}{NM \e^{3+\sigma}}.
\end{equation*}
\vskip-0.8cm
\end{proof}
\vskip0.6cm

\section{Main estimates and tightness of 2SKS}
\label{meat}

This section is devoted to some uniform in $N,M,\e$ estimates and to tightness.
More precisely, \eqref{eq:Prop21-1} shows that, despite the attraction,
cells do not spend too much time close to each other, while \eqref{eq:Prop21-3}
allows us to control the drift of the cells, which will imply tightness. Moreover,
\eqref{eq:Prop21-5} shows that the target drift in 1SKS, see~\eqref{PS}, is well-behaved.
Finally, \eqref{eq:Prop21-2} is an intermediate step to get~\eqref{eq:Prop21-5}, see~\eqref{tri1p}.
Recall that $\Aa_\sigma$ is defined in~\eqref{condie1}.

\begin{proposition}\label{prop:mc}
Let $\gamma \in (\frac 32,2)$ and $\alpha>0$. Assume that $\chi>0$ and 
$\theta>0$ are such that~\eqref{condistar} holds true.
For all $\sigma\in (0,\frac12]$, all $T>0$, there exist a constant $A_{\sigma,T}$  
(also depending on $\chi,\lambda,\alpha,\gamma,\theta,c_0$) such that for all $(N,M,\e)\in \Aa_\sigma$,
\begin{gather}
\E \Big[\int_0^T \int_{\R^2} \frac{1}{|X_s^{1,N,M,\e}-X_s^{2,N,M,\e}-\sqrt\e y|^{2(\gamma-1)}} \phi(y)\dd y\dd s\Big ]\leq A_{\sigma,T},
\label{eq:Prop21-1}\\
\E \Big[\int_0^T  \int_0^s \int_{\R^2}\frac{1}{(s-u+ |X_s^{1,N,M,\e}-X_u^{2,N,M,\e}-\sqrt\e y|^{2})^\gamma} \phi(y) \dd y 
\dd u \dd s \Big ]\leq A_{\sigma,T}, \label{eq:Prop21-2} \\
\E \Big[\int_0^T \Big|(\nu^{1,N,M,\e}_s \ast \nabla \phi_{\e})(X^{1,N,M,\e}_s)\Big|^{2(\gamma-1)}\dd s
\Big]\leq A_{\sigma,T}, \label{eq:Prop21-3}\\
\E \Big[\int_0^T  \int_0^s \int_{\R^2}
\Big|\nabla K^{\theta,\lambda}_{s-u}(X_s^{1,N,M,\e}-X_u^{2,N,M,\e}-\sqrt\e y)\Big|^{\frac{2\gamma}3} \phi(y)\dd y
\dd u \dd s \Big ]\leq A_{\sigma,T}. \label{eq:Prop21-5}
\end{gather}
\end{proposition}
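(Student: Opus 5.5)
The proof adapts the argument of~\cite[Section~6]{FT1} for the one-species system, using Proposition~\ref{prel} to handle the difference between the two systems. The core is an Itô computation for the smoothed relative position $X^1_s-X^2_s-\sqrt\e y$. Write $D_s=X^1_s-X^2_s$ and $F(s)=\int_{\R^2}|D_s-\sqrt\e y|^{-2(\gamma-1)}\phi(y)\dd y$.

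First I would apply Itô's formula to a regularised version of $x\mapsto |x|^{2-2(\gamma-1)}=|x|^{4-2\gamma}$, evaluated at $D_s-\sqrt\e y$ and integrated against $\phi(y)\dd y$. Since $4-2\gamma\in(0,1)$, the Laplacian term (the diffusion is $\sqrt2(B^1-B^2)$, with generator $2\Delta$) is positive and of order $(4-2\gamma)^2|x|^{-2(\gamma-1)}$. Taking expectations up to time $T$ then gives
\begin{equation*}
\E\Big[\int_0^T F(s)\,\dd s\Big]\lesssim \frac{1}{(4-2\gamma)^2}\Big(A_T+\E\int_0^T\!\!\int_{\R^2}\frac{|\chi(\nu^1_s\ast\nabla\phi_\e)(X^1_s)-\chi(\nu^2_s\ast\nabla\phi_\e)(X^2_s)|}{|D_s-\sqrt\e y|^{2\gamma-3}}\phi(y)\dd y\dd s\Big).
\end{equation*}
The left-hand side is controlled by $\E|D_T|^{4-2\gamma}\le A_T(1+\text{drift moments})$. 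Young's inequality splits the drift term into $\eta\E\int F$ plus a multiple of $\E\int|\nu^1\ast\nabla\phi_\e(X^1)|^{2(\gamma-1)}$, where I use $(2\gamma-3)\cdot\frac{2(\gamma-1)}{2\gamma-3}\cdot$ conjugate exponents so that the powers match $2(\gamma-1)$.

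Second, I would bound the drift, i.e.\ \eqref{eq:Prop21-3}, by writing $(\nu^1_s\ast\nabla\phi_\e)(X^1_s)=U^1_s+\nabla(\phi_\e\ast b_s)(X^1_s)+\frac1{N-1}\sum_j\int_0^s(\nabla\phi_\e\ast K_{s-u})(X^1_s-X^j_u)\dd u$.
\begin{itemize}
\item The term $U^1$ has $L^2$ moment bounded uniformly on $\Aa_\sigma$ by Proposition~\ref{prel}, since $NM\e^{1/2}\ge NM\e^{3+\sigma}\ge1$.
\item The $b$ term is bounded because $c_0\in L^\infty$.
\item For the memory term I would use $|\nabla K_{s-u}(x)|\lesssim |x|(s-u)^{-2}e^{-|x|^2/(c(s-u))}\lesssim (s-u+|x|^2)^{-3/2}$ together with the $C_0(4\alpha/\theta)$ trick from~\cite{FT1}. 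This bounds it by $\int_0^s\int\phi(y)(s-u+|X^1_s-X^j_u-\sqrt\e y|^2)^{-3/2}$.
\item Lemma~\ref{lemma:FI}, with $a=\frac12$ and $b=\gamma-1$ and the function $f(u)=|X^1_s-X^j_u-\sqrt\e y|^2$, controls the $2(\gamma-1)$ power of this term by the $\gamma$-integral of \eqref{eq:Prop21-2}. This step produces the constants $\kappa$ and $C_2$.
\end{itemize}

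Third, I would bound \eqref{eq:Prop21-2} by \eqref{eq:Prop21-1}. This again follows~\cite{FT1}: apply Itô to $u\mapsto$ a function of $(s-u,X^1_s-X^2_u)$, or equivalently use the heat-kernel identity giving $\int_0^s(s-u+|x_s-x_u|^2)^{-\gamma}\dd u$ in terms of the relative time spent close, which yields a bound by $C_1$-weighted quantities of \eqref{eq:Prop21-1} plus drift terms. Combining the three inequalities gives a closed linear system in the unknown quantities $Q_1,Q_2,Q_3$ (the left sides of \eqref{eq:Prop21-1}--\eqref{eq:Prop21-3}), of the form $C_1Q\le \chi C_2 Q+(\cdots)Q^{\,\cdot}+A_{\sigma,T}$. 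Under~\eqref{condistar} this forces $Q\le A_{\sigma,T}$, provided one knows a priori that $Q<\infty$. That a priori finiteness holds for fixed $(N,M,\e)$, since the drift is then bounded by $\|\nabla\phi_\e\|_\infty\cdot(\#\text{particles})/((N-1)M)$, which has all moments.

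Finally, \eqref{eq:Prop21-5} follows from \eqref{eq:Prop21-2}, since $|\nabla K_{s-u}(x)|^{2\gamma/3}\lesssim(s-u+|x|^2)^{-\gamma}$. The main obstacle is the exact bookkeeping of constants so that the threshold is exactly $\chi^*_\theta$. In particular, the extra $\sqrt\e y$ shift and the $U^1$ error must only contribute additive constants, and the mollified kernel $\nabla\phi_\e\ast K$ must be dominated by the $\phi$-average of $\nabla K$ at shifted points without loss. I would try to check the latter first.
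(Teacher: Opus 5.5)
Your outline has the same architecture as the paper's proof:
\begin{itemize}
\item an Itô computation for a $(4-2\gamma)$-power of the mollified relative distance;
\item the drift decomposition via Proposition~\ref{prel}, \eqref{tri1} and Lemma~\ref{lemma:FI} with $(a,b)=(\frac12,\gamma-1)$, which is Lemma~\ref{nest};
\item a memory-to-instantaneous estimate from \cite{FT1}, which is Lemma~\ref{lemma:LL};
\item \eqref{eq:Prop21-5} deduced from \eqref{tri1p}.
\end{itemize}
The gap is that the step which produces the threshold is only gestured at.

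\textbf{The missing step.} Your third step needs a specific test function: $F(t,x)=-(t+\alpha|x|^2)^{1-\gamma}$, mollified by $\phi_\e$. Itô's formula must be taken in the \emph{upper} time variable, for $t\mapsto\int_0^t F_\e(t-s,X^1_t-X^2_s)\,\dd s$ (Lemma~\ref{lemma:grosIto}). Itô in the lower variable $u$ at fixed $s$, as your ``$u\mapsto$'' suggests, would be anticipating. The terms are then handled as follows.
\begin{itemize}
\item $F\le 0$ disposes of the terminal term.
\item $F_\e(0,\cdot)$ yields $-\alpha^{1-\gamma}Q_1$.
\item $(\partial_t+\Delta)F\geq C_1(\alpha,\gamma)(t+\alpha|x|^2)^{-\gamma}$ yields $C_1Q_2$, with $Q_2$ in its $\alpha$-weighted form.
\item The drift term needs $|\nabla F|\le 2\sqrt\alpha(\gamma-1)(t+\alpha|x|^2)^{\frac12-\gamma}$, Lemma~\ref{lemma:FI} with $(a,b)=(\gamma-\frac32,\gamma-1)$, Hölder with exponent $2\gamma-2$, and then your step-2 bound $Q_3\le((1+\eta)C_4)^{2\gamma-2}Q_2+A$.
\end{itemize}
This gives $(C_1-\chi C_2)Q_2\le(1+\zeta)\alpha^{1-\gamma}Q_1+A$ with $C_2=C_4C_5$. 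So $C_2$ (through the factor $\kappa(\gamma-\frac32,\gamma-1)$) and the second condition of \eqref{condistar} come from this step, not from the drift bound as you state.

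\textbf{Two problems in your first step.}
\begin{enumerate}
\item Every relation among $Q_1,Q_2,Q_3$ is homogeneous of degree one. Absorbing a small $\eta Q_1$ via Young and leaving $c_\eta Q_3$ therefore does not close: $c_\eta\to\infty$ while $Q_3$ is itself of order $Q_1$. Instead, keep the Hölder product $Q_1^{\frac{2\gamma-3}{2\gamma-2}}Q_3^{\frac1{2\gamma-2}}$ and insert $Q_3\le(1+\eta)C_6Q_1+A$, with $C_6$ as in \eqref{C6}. Use Young only to split off the additive constant (equivalently, use Young with the optimal, not a small, weight). This compares $(4-2\gamma)^2$ with $\chi(4-2\gamma)C_6^{1/(2\gamma-2)}$, which is exactly the third condition of \eqref{condistar}.
\item $\rho_0$ carries no moment assumption, so $\E|D_T|^{4-2\gamma}\le A_T(1+\text{drift moments})$ is false in general. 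There are two fixes.
\begin{itemize}
\item Bound only the increment: $\E[f_\e(D_T)-f_\e(D_0)]\le\E|D_T-D_0|^{4-2\gamma}$, by subadditivity of $r\mapsto r^{4-2\gamma}$. The drift then enters through a power $\frac{4-2\gamma}{2\gamma-2}<1$ of $Q_3$, which is harmless.
\item As the paper does, use the bounded function $\varphi(r)=r^{2-\gamma}/(1+r^{2-\gamma})$ of $|x-y|^2$. Its Laplacian is at least $((4-2\gamma)^2-\eta)|x-y|^{2-2\gamma}-A_\eta$.
\end{itemize}
\end{enumerate}

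\textbf{Minor point.} $\nabla(\phi_\e\ast b^{c_0,\theta,\lambda}_t)$ is not bounded uniformly in $\e$; it is only bounded by $A(\e+t)^{-1/2}$ via \eqref{rmu}. This suffices because $2\gamma-2<2$.
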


In the whole section, we remove the superscripts $N,M$ and $N,M,\e$ for simplicity.
We also set
$$
R^{i,j}_{t,s}=X^i_t-X^i_s \quad \text{and}  \quad D^i_t=(\nu^{i}_s \ast \nabla \phi_{\e})(X^{i}_s).
$$
We start the proof of Proposition~\ref{prop:mc} with the following lemma where we bound the drift of the cell particles in the 2SKS system by a quantity which does not depend anymore on chemoattractant particles, but which rather involves the trajectories of the cells.

\begin{lemma}\label{nest}
For any $\sigma \in (0,\frac 12]$, any $\alpha>0$, any $\gamma \in (\frac 32,2)$, any $T>0$, any $\eta>0$,
there exists a constant $A_{\sigma,\eta,T}$ (also depending on $\lambda,\theta,\gamma,\alpha,c_0$) such that 
if $(N,M,\e)\in \Aa_\sigma$, for all $u\in [0,T]$,
\begin{align*}
\E\Big[\int_0^u |D^1_t|^{2\gamma-2} \dd t \Big] \leq &
\Big((1+\eta)C_4(\theta,\alpha,\gamma)\Big)^{2\gamma-2}
\E\Big[\int_0^u \int_0^t \int_{\R^2} \frac{\phi(y) \dd y\dd s\dd t}{(t-s+\alpha|R^{1,2}_{t,s}-\sqrt \e y|^2)^{\gamma}}\Big]
+ A_{\sigma,\eta,T},
\end{align*}
where $C_4(\theta,\alpha,\gamma)=C_3(\theta,\alpha)\kappa(\frac{1}{2}, \gamma-1\big)$.
\end{lemma}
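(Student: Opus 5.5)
We plan to decompose the drift with Proposition~\ref{prel}: $D^1_t = U^1_t + (\nabla\phi_\e\ast b^{c_0,\theta,\lambda}_t)(X^1_t) + \frac1{N-1}\sum_{j\neq 1}\int_0^t (\nabla\phi_\e\ast K^{\theta,\lambda}_{t-s})(X^1_t-X^j_s)\dd s$. Since $2\gamma-2\in(1,2)$, we use the elementary inequality $|a+b|^{p}\le (1+\eta)^{p}|a|^p + A_{\eta,p}|b|^p$ with $p=2\gamma-2$. We put the path-guided interaction term in the role of $a$, and the other two terms in the role of $b$.

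\textbf{Error terms.} The $b$-part is harmless. First, $\E[|U^1_t|^{2\gamma-2}]\le \E[|U^1_t|^2]^{\gamma-1}$ by Jensen. Proposition~\ref{prel} combined with $(N,M,\e)\in\Aa_\sigma$ makes this bounded by a constant $A_{\sigma,T}$: the conditions $NM\e^{3+\sigma}\ge1$ and $NM^{1+\sigma}\e^{2+\sigma}\ge1$ bound the last two terms, and they also give $(NM\e^{1/2})^2\ge (NM\e^{3+\sigma})^2\ge1$. Second, $\nabla\phi_\e\ast b_t^{c_0,\theta,\lambda}=\phi_\e\ast(\theta\nabla K_t^{\theta,\lambda}\ast c_0)$ is bounded by $\|c_0\|_\infty\theta\|\nabla K^{\theta,\lambda}_t\|_{L^1}\le A t^{-1/2}$. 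Since $(2\gamma-2)/2<1$, this is integrable in $t$ on $[0,T]$, uniformly in $\e$.

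\textbf{Main term.} Write $\nabla\phi_\e\ast K_{t-s}(z)=\int\nabla K_{t-s}(z-\sqrt\e y)\phi(y)\dd y$. We then use the pointwise Gaussian bound from~\cite{FT1}: $|\nabla K^{\theta,\lambda}_{v}(z)|\le C_3(\theta,\alpha)\, (v+\alpha|z|^2)^{-3/2}$, which is how the constant $C_3(\theta,\alpha)$ (via $C_0(4\alpha/\theta)$) enters. We would recall this bound from the appendix or~\cite{FT1}. This gives
\[
|a|\le \frac{C_3}{N-1}\sum_{j\ne1}\int_{\R^2}\phi(y)\int_0^t\frac{\dd s}{(t-s+\alpha|R^{1,j}_{t,s}-\sqrt\e y|^2)^{3/2}}\dd y .
\]
For fixed $j,y$ we apply Lemma~\ref{lemma:FI} with $a=\frac12$, $b=\gamma-1$ and $f(s)=\alpha|R^{1,j}_{t,s}-\sqrt\e y|^2$. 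It bounds the inner integral by $\kappa(\frac12,\gamma-1)\big(\int_0^t (t-s+f(s))^{-\gamma}\dd s\big)^{1/(2\gamma-2)}$. Raising to the power $2\gamma-2\ge1$, the convexity (Jensen) of $x\mapsto x^{2\gamma-2}$ over the probability average in $j$ and over $\phi(y)\dd y$ gives
\[
|a|^{2\gamma-2}\le C_4^{2\gamma-2}\frac1{N-1}\sum_{j\ne1}\int\phi(y)\int_0^t\frac{\dd s}{(t-s+\alpha|R^{1,j}_{t,s}-\sqrt\e y|^2)^\gamma}\dd y.
\]
Integrating over $t\in[0,u]$, taking expectations and using exchangeability (Remark~\ref{rkeu}) replaces each $j$ by $2$. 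This yields the stated bound with factor $((1+\eta)C_4)^{2\gamma-2}$.

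\textbf{Main obstacle.} The delicate point is making the constant exactly $(1+\eta)C_4$, which requires the order "Jensen after Lemma~\ref{lemma:FI}". It also requires checking that the Gaussian bound holds with $\alpha$ arbitrary, which costs $C_0(4\alpha/\theta)$ rather than some suboptimal constant. The $\e$-error itself is fully absorbed by Proposition~\ref{prel}, so all the mollifier difficulty has already been paid there.
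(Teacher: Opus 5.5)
Your proposal is correct and follows essentially the paper's route. The paper also decomposes $D^1_t$ via Proposition~\ref{prel} and uses the $(1+\eta)$-splitting inequality. It controls $U^1_t$ through $(N,M,\e)\in\Aa_\sigma$ and $\nabla(\phi_\e\ast b^{c_0,\theta,\lambda}_t)$ through a $t^{-1/2}$-type bound. For the main term it uses \eqref{tri1}, Jensen/H\"older, Lemma~\ref{lemma:FI} with $a=\frac12$, $b=\gamma-1$, and exchangeability.

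The only difference is the ordering: the paper uses convexity and exchangeability in $j$ first, then H\"older in $y$, and only then Lemma~\ref{lemma:FI}. Contrary to your remark, this order does not affect the constant. Raising the bound of Lemma~\ref{lemma:FI} to the power $2\gamma-2$ gives exactly the factor $\kappa(\frac12,\gamma-1)^{2\gamma-2}$ whichever way you proceed.
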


\begin{proof} We set $\beta=2\gamma-2 \in (1,2)$.
Notice that
$D^1_t= \nabla (\phi_\e\ast b^{c_0,\theta,\lambda}_t)(X^1_t)+ V_t^1+ U^1_t$, where
$$
V_t^1=\frac1{N-1}\sum_{j\in I_N,j\neq 1}\int_0^t  (\nabla \phi_{\e}\ast K^{\theta,\lambda}_{t-s})(X^1_t-X^{j}_s)\dd s
$$
and with $\sup_{[0,T]} \E[|U^1_t|^2] \leq A_{\sigma,T}$ by Proposition~\ref{prel}, because $(N,M,\e)\in \Aa_\sigma$, see~\eqref{condie1}, which also implies that $NM\e^{\frac12} \geq NM\e^{3+\sigma}\geq 1$.
For $\eta>0$, there exists $A_{\eta}>0$ (also depending on $\beta$) 
such that $(a_1+a_2+a_3)^\beta\leq (1+\eta)^\beta a_1^\beta + A_{\eta}(a_2^\beta+a_3^\beta)$ for all 
$a_1,a_2,a_3\geq 0$. Thus for all $u\in [0,T]$, using that $\sup_{[0,T]} \E[|U^1_t|^\beta] \leq A_{\sigma,T}$,
\begin{align}\label{pf}
\E\Big[\int_0^u |D^1_t|^\beta\dd t\Big]\leq& (1+\eta)^\beta\E\Big[\int_0^u |V^1_t|^\beta\dd t\Big]
+ A_{\eta} \int_0^T ||\nabla (\phi_{\e}\ast  b^{c_0,\theta,\lambda}_t) ||^\beta_{L^\infty(\R^2)} \dd t 
+ A_{\eta,\sigma,T}.
\end{align}
By \eqref{rmu} and since $\beta \in (1,2)$, there exists $A_{T}$
(also depending on $c_0,\theta,\gamma$) such that
\begin{equation}\label{aa1}
\int_0^T ||\nabla (\phi_{\e}\ast b^{c_0,\theta,\lambda}_t) ||^\beta_{L^\infty(\R^2)} \dd t \leq A_{T}.
\end{equation}
Since $\beta>1$, we have
\begin{align*}
\E[|V^1_t|^\beta] \leq& \frac1{N-1}\sum_{j\in I_N,j\neq 1}\E\Big[\Big(\int_0^t  
|(\nabla \phi_{\e}\ast K^{\theta,\lambda}_{t-s})(X^1_t-X^{j}_s)|\dd s\Big)^\beta \Big]\\
=&\E\Big[\Big(\int_0^t  
|(\nabla \phi_{\e}\ast K^{\theta,\lambda}_{t-s})(X^1_t-X^{2}_s)|\dd s\Big)^\beta \Big]
\end{align*}
by exchangeability. Using next~\eqref{tri1} and integrating in $t\in [0,u]$, we get 
\begin{align*}
\int_0^u \E[|V^1_t|^\beta] \dd t\leq& \Big(C_3(\theta,\alpha)\Big)^\beta
\E\Big[\int_0^u \Big(\int_{\R^2}\int_0^t (t-s+\alpha |X^1_t-X^2_s-\sqrt\e y|^2)^{-\frac{3}{2}} \dd s\phi(y)\dd y\Big)^\beta \dd t\Big]\\
\leq& \Big(C_3(\theta,\alpha)\Big)^\beta
\E\Big[\int_0^u  \int_{\R^2}\Big(\int_0^t (t-s+\alpha |X^1_t-X^2_s-\sqrt\e y|^2)^{-\frac{3}{2}} \dd s\Big)^\beta \phi(y)\dd y\dd t\Big]
\end{align*}
by Hölder's inequality, since $\beta>1$ and since $\phi$ is a probability density.
Using finally Lemma~\ref{lemma:FI} with $a=\frac 12$ and $b=\gamma-1>a$, we get
\begin{equation*}
\E\Big[\int_0^u |V^1_t|^\beta \dd t \Big] \leq
\Big(C_3(\theta,\alpha)\kappa\Big(\frac12,\gamma-1\Big)\Big)^\beta
\E\Big[\int_0^u \int_{\R^2}\int_0^t  (t-s+\alpha |X^1_t-X^2_s-\sqrt\e y|^2)^{-\gamma} 
\dd s \phi(y)\dd y\dd t\Big].
\end{equation*}
Inserting~\eqref{aa1} and this last inequality
into~\eqref{pf} completes the proof.
\end{proof}

The rest of the section closely follows~\cite{FT1}. We start with a specific Itô formula.

\begin{lemma}\label{lemma:grosIto}
Let $F: \R_+ \times \R^2 \to \R$ be of class $C^{2}_b(\R_+ \times \R^2)$. For all $u\geq 0$,
\begin{align*}
\E \Big[ \int_0^u F(u-s, R^{1,2}_{u,s}) \dd s \Big ] =& \E \Big[ \int_0^u F(0, R^{1,2}_{s,s})
\dd s \Big ] + \E \Big[\int_0^u \int_0^t (\partial_t F + \Delta F) (t-s,R^{1,2}_{t,s})  \dd s \dd t \Big]\\
&+ \chi \E \Big[\int_0^u \Big(\int_0^t \nabla F (t-s, R_{t,s}^{1,2})  \dd s \Big) \cdot D^{1}_t \dd t \Big].
\end{align*}
\end{lemma}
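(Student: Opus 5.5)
We need to prove the Itô formula. The notation says $R^{i,j}_{t,s}=X^i_t-X^i_s$; this is presumably a typo for $X^i_t-X^j_s$. With $R^{1,2}_{t,s}=X^1_t-X^2_s$, fix $s$. The process $t\mapsto X^1_t$ is a semimartingale, $\dd X^1_t=\sqrt2\,\dd B^1_t+\chi D^1_t\,\dd t$, where $D^1_t=(\nu^1_t\ast\nabla\phi_\e)(X^1_t)$. The drift is bounded on $[0,T]$ by $\chi\|\nabla\phi_\e\|_\infty$ times the number of living chemoattractant particles divided by $(N-1)M$, which has all moments. I will apply Itô's formula, for fixed $s\in[0,u]$ and $t\in[s,u]$, to $t\mapsto F(t-s,X^1_t-X^2_s)$, where $X^2_s$ is frozen ($\mathcal F_s$-measurable). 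This gives
\[
F(u-s,R^{1,2}_{u,s})=F(0,R^{1,2}_{s,s})+\int_s^u(\partial_tF+\Delta F)(t-s,R^{1,2}_{t,s})\,\dd t+\chi\int_s^u\nabla F(t-s,R^{1,2}_{t,s})\cdot D^1_t\,\dd t+\sqrt2\int_s^u\nabla F(t-s,R^{1,2}_{t,s})\cdot\dd B^1_t .
\]
Here $\Delta$ appears with coefficient one, since $\dd\langle \sqrt2 B\rangle=2\,\dd t$ and $\frac12\cdot2=1$. One could also avoid freezing by using Itô's formula on $G_t=\int_0^t F(t-s,X^1_t-X^2_s)\,\dd s$ directly: its $t$-derivative contributes the boundary term $F(0,R_{t,t})$, and the remaining terms follow by differentiating under the integral.

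Next I take expectations and integrate in $s\in[0,u]$. The stochastic integral has zero expectation because $\nabla F$ is bounded ($F\in C^2_b$) and the integrand is adapted for $t\ge s$, so it is a true martingale. Every other term is integrable: $\partial_tF$ and $\Delta F$ are bounded, and $\E\int_0^u|D^1_t|\,\dd t<\infty$ by the moment bound on the drift above. So Fubini applies, and swapping $\int_0^u\dd s\int_s^u\dd t$ into $\int_0^u\dd t\int_0^t\dd s$ gives exactly the stated identity.

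The only delicate point is the joint measurability in $(s,\omega)$ of the family of stochastic integrals $\int_s^u\nabla F(t-s,X^1_t-X^2_s)\cdot\dd B^1_t$ needed to use Fubini. To bypass it, I would apply a stochastic Fubini theorem, or more simply take the direct route for $G_t$: write $\int_0^u\dd s\int_s^u\nabla F\cdot\dd B^1_t=\int_0^u\big(\int_0^t\nabla F(t-s,R^{1,2}_{t,s})\,\dd s\big)\cdot\dd B^1_t$. The integrand on the right is adapted and bounded by $u\|\nabla F\|_\infty$, hence a martingale with zero mean. Jumps of the chemoattractant system do not affect $X^1$, which is continuous, so no jump terms appear.
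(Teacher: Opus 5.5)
Your proof is correct and is essentially the argument the paper imports from \cite[Lemma~10]{FT1}: Itô's formula for $t\mapsto F(t-s,X^1_t-X^2_s)$ on $[s,u]$, the bounded stochastic integral having zero mean, and Fubini. The only change from that reference is that the drift is now $D^1_t$, whose integrability you correctly obtain from $|D^1_t|\le\|\nabla\phi_\e\|_\infty\,\nu^1_t(\R^2)$. The measurability issue you raise can be avoided altogether by taking expectations at each fixed $s$ before integrating in $s$, so no stochastic Fubini theorem is needed.
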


\begin{proof}
It suffices to use the arguments of~\cite[Lemma~10]{FT1}, replacing the drift of $(X^1_t)_{t\geq 0}$ therein,
namely $\nabla b^{c_0,\theta,\lambda}(X^1_t) + \frac1{N-1}\sum_{j=2}^N D^{1,j}_t$ by the drift 
of $(X^1_t)_{t\geq 0}$ here, namely $D^{1}_t$.
\end{proof}

Once we have bounded in Lemma~\ref{nest} the drift of the 2SKS cell particles by a quantity involving trajectorial interactions (as in the drift of the 1SKS system), we are in a position to adapt~\cite[Proposition~11]{FT1} to our situation: 
We can bound these trajectorial interactions by a term involving only instantaneous interactions.

\begin{lemma}\label{lemma:LL}
Fix $\alpha>0$ and $\gamma\in (\frac 32,2)$. Assume that $\chi>0$ and $\theta>0$ are such that
$C_1(\alpha,\gamma)>0$ and $C_1(\alpha,\gamma)>\chi C_2(\theta,\alpha,\gamma)$.
For all $T>0$, all $\sigma\in (0,\frac 12]$ and all $\zeta>0$, there exists a constant $A_{\sigma,\zeta,T}$
(also depending on $\chi,\theta,c_0,\gamma,\alpha$) such that if $(N,M,\e)\in \Aa_\sigma$, then for
all $u \in [0,T]$,
\begin{align*}
&\E\Big[ \int_0^u \int_0^t \int_{\R^2}\frac{1}{(t-s + \alpha |R^{1,2}_{t,s}-\sqrt\e y|^2)^{\gamma}} \phi(y)\dd y\dd s\dd t \Big]\\
\leq& \frac{(1+\zeta)\alpha^{1-\gamma}}{C_1(\alpha,\gamma) -\chi C_2(\theta,\alpha,\gamma)} 
\E\Big[ \int_0^u \int_{\R^2} \frac{1}{|R_{s,s}^{1,2}-\sqrt\e y|^{2(\gamma-1)}}\phi(y)\dd y \dd s\Big] + A_{\sigma,\zeta,T}.
\end{align*}
\end{lemma}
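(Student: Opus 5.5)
We follow the strategy of \cite[Proposition~11]{FT1}, applying Lemma~\ref{lemma:grosIto} to a regularized version of the function $F(t,x)=(t+\alpha|x-\sqrt\e y|^2)^{1-\gamma}$, then integrating against $\phi(y)\dd y$. Since $F$ is singular at $(0,\sqrt\e y)$, we first use $F_\delta(t,x)=(\delta+t+\alpha|x-\sqrt\e y|^2)^{1-\gamma}$ with $\delta>0$, suitably truncated to lie in $C^2_b$, and let $\delta\to0$ at the end by monotone convergence.

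\emph{Computing the Itô terms.} A direct computation gives
\[
\partial_tF_\delta+\Delta F_\delta=(1-\gamma)(\delta+t+\alpha|x-\sqrt\e y|^2)^{-\gamma}\bigl[1+4\alpha\bigr]+4\alpha^2\gamma(\gamma-1)|x-\sqrt\e y|^2(\cdots)^{-\gamma-1}.
\]
Using $\alpha|x-\sqrt\e y|^2\le(\cdots)$, this is bounded above by $-C_1(\alpha,\gamma)\,(\cdots)^{-\gamma}$ after rearranging, exactly as in \cite{FT1}. The left-hand side of Lemma~\ref{lemma:grosIto} is nonnegative. The first term on the right-hand side equals $\int_0^u\int(\delta+\alpha|R^{1,2}_{s,s}-\sqrt\e y|^2)^{1-\gamma}\phi(y)\dd y\dd s$, which is at most $\alpha^{1-\gamma}\int_0^u\int|R^{1,2}_{s,s}-\sqrt\e y|^{-2(\gamma-1)}\phi\dd y\dd s$. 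Moving the Laplacian term to the left, we obtain
\[
C_1\,\E[I_u]\le \alpha^{1-\gamma}\E[\text{instantaneous term}]+\chi\,\E\Big[\int_0^u\Big|\int_0^t\nabla F(t-s,R^{1,2}_{t,s})\dd s\Big|\,|D^1_t|\dd t\Big],
\]
where $I_u$ denotes the target triple integral.

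\emph{Drift term (main obstacle).} We have $|\nabla F|\le 2\alpha(\gamma-1)|x-\sqrt\e y|(\cdots)^{-\gamma}\le 2\sqrt\alpha(\gamma-1)(\cdots)^{-\gamma+\frac12}$. Lemma~\ref{lemma:FI} with $a=\gamma-\frac32$ and $b=\gamma-1$ gives
\[
\int_0^t(\cdots)^{-\gamma+\frac12}\dd s\le \kappa\bigl(\gamma-\tfrac32,\gamma-1\bigr)\Big(\int_0^t(\cdots)^{-\gamma}\dd s\Big)^{\frac{2\gamma-3}{2\gamma-2}}.
\]
We then apply Hölder/Young with exponents $\frac{2\gamma-2}{2\gamma-3}$ and $2\gamma-2$ in $\dd t$ (and Jensen in $\phi(y)\dd y$). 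This bounds the drift term by $\E[I_u]^{\frac{2\gamma-3}{2\gamma-2}}\,\E[\int_0^u|D^1_t|^{2\gamma-2}\dd t]^{\frac1{2\gamma-2}}$, up to constants. Lemma~\ref{nest} bounds the second factor by $(1+\eta)C_4\,\E[I_u]^{1/(2\gamma-2)}+A_{\sigma,\eta,T}$. Substituting $\alpha|\cdot|^2$ is consistent because $R^{1,2}$ appears identically. The product then gives $\chi(1+\eta)C_2(\theta,\alpha,\gamma)\E[I_u]+A\,\E[I_u]^{\frac{2\gamma-3}{2\gamma-2}}$. Checking that the constants combine to exactly $C_2$ (with $C_3$ from~\eqref{tri1} matching the factor $\frac{\sqrt{\alpha\theta}}{2\pi}C_0$) is the delicate bookkeeping; we expect it to follow the same path as \cite{FT1}.

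\emph{Conclusion.} We obtain
\[
(C_1-\chi(1+\eta)C_2)\E[I_u]\le\alpha^{1-\gamma}\E[\dots]+A\,\E[I_u]^{\frac{2\gamma-3}{2\gamma-2}}.
\]
Since the exponent is less than $1$, Young's inequality absorbs the last term as $\eta'\E[I_u]+A'$. Choosing $\eta,\eta'$ small relative to $\zeta$ and using $C_1>\chi C_2$ yields the claim, provided $\E[I_u]<\infty$ so that the absorption is legitimate. This is the reason for working with $\delta>0$: for each $\delta$ the relevant quantity is finite (the regularized integrand is bounded by $\delta^{-\gamma}$ over $[0,T]^2$). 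We therefore run the argument with $F_\delta$ throughout, also replacing $I_u$ by its $\delta$-version in Lemma~\ref{nest} (which only helps monotonicity), and then let $\delta\to0$.
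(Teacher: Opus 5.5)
Your skeleton is the paper's. The paper applies Lemma~\ref{lemma:grosIto} to $F_\e(t,x)=-\int_{\R^2}(t+\alpha|x-\sqrt\e y|^2)^{1-\gamma}\phi(y)\dd y$, which is your $F$ averaged in $y$ with the opposite sign. It then runs exactly your chain: the bound by $C_1$, Lemma~\ref{lemma:FI} with $a=\gamma-\frac32$ and $b=\gamma-1$, Jensen in $\phi(y)\dd y$, Hölder in $\E[\int_0^u\cdot\,\dd t]$, Lemma~\ref{nest}, Young, and $C_5C_4=C_2$ with $C_5=2\sqrt\alpha(\gamma-1)\kappa(\gamma-\frac32,\gamma-1)$.

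\textbf{The gap is in your closing step.} With $F_\delta$, the Itô argument controls $\E[I^\delta_u]$. But Lemma~\ref{nest} bounds $\E[\int_0^u|D^1_t|^{2\gamma-2}\dd t]$ by the \emph{unregularized} $\E[I_u]$, so what you actually obtain is
\[
C_1\E[I^\delta_u]\le \alpha^{1-\gamma}\E\Big[\int_0^u\int_{\R^2}\frac{\phi(y)\dd y\dd s}{|R^{1,2}_{s,s}-\sqrt\e y|^{2(\gamma-1)}}\Big]+\chi C_5\,\E[I^\delta_u]^{\frac{2\gamma-3}{2\gamma-2}}\Big((1+\eta)C_4\,\E[I_u]^{\frac1{2\gamma-2}}+A_{\sigma,\eta,T}\Big).
\]
Your remedy, replacing $I_u$ by $I^\delta_u$ in Lemma~\ref{nest} ``which only helps monotonicity'', goes the wrong way. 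Since $I^\delta_u\le I_u$, that replacement would be a strictly stronger statement than Lemma~\ref{nest}, and its proof (through \eqref{tri1}, which contains no $\delta$) does not provide it. Finiteness of $\E[I^\delta_u]$ alone then allows no absorption: if $\E[I_u]=\infty$ the display above is void. As written, the argument does not close.

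\textbf{The missing observation} is that for each fixed $\e\in(0,1]$, $\E[I_u]$ is itself finite. Averaging against the bounded density $\phi$ already removes the singularity: with $w=z-\sqrt\e y$,
\[
\int_{\R^2}\frac{\phi(y)\dd y}{(t-s+\alpha|z-\sqrt\e y|^2)^{\gamma}}\le\frac{\|\phi\|_{L^\infty(\R^2)}}{\e}\int_{\R^2}\frac{\dd w}{(t-s+\alpha|w|^2)^{\gamma}}=\frac{\pi\|\phi\|_{L^\infty(\R^2)}}{\alpha(\gamma-1)\e}\,(t-s)^{1-\gamma}.
\]
This is integrable in $s$ because $\gamma<2$, so $I_u\le C_\e u^{3-\gamma}$ deterministically. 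The bound is not uniform in $\e$, but it need not be. It only serves to legitimize subtracting, from both sides, the multiple of $\E[I_u]$ produced by Lemma~\ref{nest} and Young's inequality. This is why the paper needs no $\delta$ at all; it implicitly relies on this finiteness.

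If you want to keep $\delta$ to have an honest $C^2_b$ test function in Lemma~\ref{lemma:grosIto}, that is a reasonable precaution, since $F_\e$ is not quite $C^2_b$ near $t=0$; note that $F_\delta$ is already in $C^2_b$, so no truncation is needed. In that case:
\begin{enumerate}
\item bound $\E[I^\delta_u]\le\E[I_u]$ in the drift term;
\item let $\delta\to0$ on the left-hand side by monotone convergence;
\item only then apply Young and absorb, using $\E[I_u]<\infty$.
\end{enumerate}
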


\begin{proof}
We set
$$
S^{1,2}_t=\int_0^t \int_{\R^2} (t-s + \alpha |R^{1,2}_{t,s}-\sqrt\e y|^2)^{-\gamma} \phi(y) \dd y  \dd s
$$
and we define $F:\R_+\times\R^2\to \R_-$ by $F(t, x)= - (t+\alpha|x|^2)^{1-\gamma}$. We have as in~\cite{FT1}
\begin{gather*}
\nabla F(t,x)= 2\alpha (\gamma-1) (t+\alpha|x|^2)^{-\gamma} x, \quad
\text{whence} \quad |\nabla F(t,x)| \leq 2\sqrt\alpha (\gamma-1)(t+\alpha|x|^2)^{\frac{1}{2}-\gamma}, \\
(\partial_t F + \Delta F)(t,x) \geq C_1(\alpha,\gamma)(t+\alpha|x|^2)^{-\gamma}.
\end{gather*}
We next set $F_\e(t,x)=(F(t,\cdot)\ast \phi_\e)(x)$, which satisfies
\begin{gather*}
F_\e(0,x)=-\int_{\R^2} F(0,x-y)\phi_\e(y) \dd y =-\alpha^{1-\gamma}\int_{\R^2} |x-\sqrt{\e}y |^{2(1-\gamma)}\phi(y)\dd y, \\
|\nabla F_\e(t,x)|\leq \int_{\R^2} |\nabla F(t,x-y)|\phi_\e(y) \dd y\leq 
2\sqrt\alpha (\gamma-1)\int_{\R^2}(t+\alpha|x-\sqrt\e y|^2)^{\frac{1}{2}-\gamma} \phi(y)\dd y,\\
(\partial_t F_\e + \Delta F_\e)(t,x) = \int_{\R^2}(\partial_t F + \Delta F)(t,x-y) \phi_\e(y)\dd y \geq C_1(\alpha,\gamma)\int_{\R^2}(t+\alpha|x-\sqrt\e y|^2)^{-\gamma} \phi(y)\dd y.
\end{gather*}
Applying  Lemma~\ref{lemma:grosIto} to $F_\e$, we get
\begin{equation}\label{eq:abeg}
I^{1}_u= I^{2}_u+I^{3}_u + I^{4}_u,
\end{equation}
where, using the above estimates concerning $F_\e$,
\begin{align}
I^{1}_u:=&\E \Big[\int_0^u F_\e(u-s, R^{1,2}_{u,s}) \dd s\Big]\leq 0, \label{eq:a}\\
I^{2}_u :=&\E \Big[\int_0^u F_\e(0, R^{1,2}_{s,s}) \dd s\Big]= -\alpha^{1-\gamma}\E \Big[\int_0^u \int_{\R^2}|R^{1,2}_{s,s}-\sqrt{\e}y|^{2(1-\gamma)} 
\phi(y) \dd y \dd s\Big],\label{eq:b}\\
I^{3}_u:=&  \E \Big[\int_0^u \int_0^t (\partial_t F_\e + \Delta F_\e) (t-s, R_{t,s}^{1,2})  \dd s \dd t \Big]
\geq C_1(\alpha,\gamma)\E\Big[\int_0^u  S^{1,2}_t \dd t \Big], \label{eq:c}\\ 
I^{4}_u:=&  \chi \E \Big[\int_0^u \Big(\int_0^t  \nabla F_\e(t-s, R_{t,s}^{1,2})\dd s\Big)\cdot D^{1}_t \dd t\Big]. 
\notag
\end{align}
We write $I^4_u \geq - \chi \E[\int_0^u  |D^{1}_t| T_t \dd t]$, where, by the above estimate on $\nabla F_\e$, 
$$T_t=\int_0^t  |\nabla F_\e(t-s, R_{t,s}^{1,2})| \dd t \leq 
2\sqrt{\alpha}(\gamma-1)\int_0^t \int_{\R^2}(t-s+\alpha| R^{1,2}_{t,s}-\sqrt \e y|^2)^{\frac12-\gamma} \phi(y)\dd y\dd s.
$$
By Lemma~\ref{lemma:FI} with $a=\gamma-\frac 32 $ and $b=\gamma-1$,
setting 
\begin{equation}
C_5(\alpha,\gamma)=2\sqrt{\alpha}(\gamma-1)\kappa\Big(\gamma-\frac32,\gamma-1\Big),\label{estTu}
\end{equation}
we have 
\begin{align*}
T_t\leq& C_5(\alpha,\gamma) \int_{\R^2}\Big(\int_0^t (t-s+\alpha| R^{1,2}_{t,s}-\sqrt \e y|^2)^{-\gamma}\dd s\Big)^{\frac{\gamma-\frac 32}{\gamma-1}} \phi(y)\dd y
\\
\leq & C_5(\alpha,\gamma) \Big(\int_{\R^2}\int_0^t (t-s+\alpha| R^{1,2}_{t,s}-\sqrt \e y|^2)^{-\gamma}\dd s\phi(y)\dd y\Big)^{\frac{\gamma-\frac 32}{\gamma-1}} 
\end{align*}
by Hölder's inequality with respect to $\phi(y)\dd y$. Thus $T_t\leq C_5(\alpha,\gamma) (S^{1,2}_t)^{\frac{\gamma-\frac 32}{\gamma-1}}$, so that
\begin{align*}
I^4_u \geq& - \chi C_5(\alpha,\gamma) \E \Big[\int_0^u  |D^{1}_t| (S^{1,2}_t)^ {\frac{\gamma-\frac 32}{\gamma-1}}\dd t\Big]\\
\geq& - \chi C_5(\alpha,\gamma) \E \Big[\int_0^u  |D^{1}_t|^{2\gamma-2}\dd t\Big]^{\frac1{2\gamma-2}}
\E\Big[\int_0^u S^{1,2}_t\dd t\Big]^{\frac{\gamma-\frac 32}{\gamma-1}}
\end{align*}
by Hölder's inequality with  $p=2\gamma-2$ and $p'=\frac{2\gamma-2}{2\gamma-3}$.
By Lemma~\ref{nest}, for $\eta>0$ to be chosen later, for all $u \in [0,T]$,
\begin{align*}
I^4_u \geq& - \chi C_5(\alpha,\gamma)\Big([(1+\eta)C_4(\theta,\alpha,\gamma)]^{2\gamma-2}
\E\Big[\int_0^u S^{1,2}_t\dd t\Big]+A_{\sigma,\eta,T}\Big) ^{\frac1{2\gamma-2}}
\E\Big[\int_0^u S^{1,2}_t\dd t\Big]^{\frac{2\gamma-3}{2\gamma-2}}.
\end{align*}
Since $2\gamma-2>1$ and $\eta>0$, there exists $A_{\eta}>0$ such that for all $a_1,a_2,a_3 \geq0$
\begin{align*}
(a_1^{2\gamma-2}a_2+a_3)^{\frac1{2\gamma-2}}a_2^{\frac{2\gamma-3}{2\gamma-2}} \leq & (a_1a_2^{\frac1{2\gamma-2}}
+a_3^{\frac1{2\gamma-2}}) a_2^{\frac{2\gamma-3}{2\gamma-2}}
= a_1a_2 +a_3^{\frac1{2\gamma-2}}a_2^{\frac{2\gamma-3}{2\gamma-2}}
\leq (a_1+\eta) a_2 + A_{\eta} a_3
\end{align*}
by Young's inequality. Thus for all $u \in [0,T]$, with $A_{\sigma,\eta,T}$ also depending on $\chi$,
\begin{align}
I^4_u\geq& - \chi C_5(\alpha,\gamma)[(1+\eta)C_4(\theta,\alpha,\gamma)+\eta]
\E \Big[\int_0^u   S^{1,2}_t\dd t\Big] - A_{\sigma,\eta,T}.
\label{eq:d}
\end{align}

Plugging~\eqref{eq:a}-\eqref{eq:b}-\eqref{eq:c}-\eqref{eq:d} into~\eqref{eq:abeg}, we get
\begin{align*}
&\Big(C_1(\alpha,\gamma)\!-\!\chi C_5(\alpha,\gamma)[(1\!+\!\eta)C_4(\theta,\alpha,\gamma)\!+\!\eta]\Big)
\E \Big[\int_0^u \!  S^{1,2}_t\dd t\Big]\\
\leq& \alpha^{1-\gamma}  \E \Big[\int_0^u\!\int_{\R^2} |R^{1,2}_{s,s}-\sqrt\e y|^{2(1-\gamma)} \phi(y)\dd y\dd s\Big]\!+\!A_{\sigma,\eta,T}.
\end{align*}
Recalling   \eqref{estTu},  Lemma~\ref{nest}, \eqref{tri1p} and~\eqref{eq:C2}, one sees that 
$C_5(\alpha,\gamma)C_4(\theta,\alpha,\gamma)=C_2(\theta,\alpha,\gamma)$.
Recall also that $C_1(\alpha,\gamma)>\chi C_2(\theta,\alpha,\gamma)$ by assumption. 
Thus for $\zeta>0$ fixed, we can find $\eta>0$ small enough such that
$$
\Big(C_1(\alpha,\gamma)\!-\!\chi C_5(\alpha,\gamma)[(1\!+\!\eta)C_4(\theta,\alpha,\gamma)\!+\!\eta]\Big)
\geq \frac{C_1(\alpha,\gamma)-\chi C_2(\theta,\alpha,\gamma)}{1+\zeta}>0.
$$
As a conclusion, for all $u\in [0,T]$,
$$
\E \Big[\int_0^u \!  S^{1,2}_t\dd t\Big] \leq 
\frac{ \alpha^{1-\gamma} (1+\zeta)}{C_1(\alpha,\gamma)-\chi C_2(\theta,\alpha,\gamma)} 
\E \Big[\int_0^u\! \int_{\R^2} \frac{\phi(y)\dd y\dd s}{|R^{1,2}_{s,s}-\sqrt\e y|^{2(\gamma-1)}}\Big]
+ \frac{(1+\zeta)A_{\sigma,\zeta,T}}{C_1(\alpha,\gamma)-\chi C_2(\theta,\alpha,\gamma)},
$$
which was our goal.
\end{proof}

We can finally handle the

\begin{proof}[Proof of Proposition~\ref{prop:mc}]
We follow the arguments of~\cite[Proof of Proposition~8]{FT1}.

\vip

Define $\varphi(r):=(1+ r^{2-\gamma})^{-1}r^{2-\gamma}$ on $\R_+$ and set 
$\psi(x,y)=\varphi(|x-y|^2)$ for $x,y \in \R^2$. 
As in~\cite{FT1}, for all $\eta>0$, there exists $A_\eta>0$ (also depending on $\gamma$) such that for all $x,y \in \R^2$,
\begin{gather*}
\nabla_x \psi (x,y)= (4-2\gamma) \frac{|x-y|^{2-2\gamma}}{(1+|x-y|^{4-2\gamma})^2}(x-y), \quad \text{whence} \quad  |\nabla_x\psi(x,y)|\leq (4-2\gamma)|x-y|^{3-2\gamma},\\ 
\Delta_x \psi (x,y) \geq ((4-2\gamma)^2-\eta) |x-y|^{2-2\gamma}-A_\eta.
\end{gather*}
We define, using the symmetry of $\phi_\e$, see Assumption~\ref{mol},
$$
\psi_\e(x,y)=\int_{\R^2}\psi (x,z) \phi_\e(y-z)\dd z = \psi_\e(y,x).
$$
Notice that, using again the symmetry of $\phi$,
\begin{gather*}
|\nabla_x \psi_\e(x,y)|\leq \int_{\R^2} |\nabla_x \psi(x,z)|\phi_\e(y-z)\dd z
\leq (4-2\gamma) \int_{\R^2} |x-y-\sqrt{\e} z|^{3-2\gamma} \phi(z) \dd z,\\
\Delta_x \psi_\e(x,y)=\int_{\R^2} \Delta_x\psi (x,z) \phi_\e(y-z)\dd z  \geq 
((4-2\gamma)^2-\eta) \int_{\R^2} |x-y-\sqrt\e z|^{2-2\gamma}\phi(z)\dd z-A_\eta.
\end{gather*}
By the It\^o formula starting from the first line of~\eqref{troc}, by symmetry and exchangeability, recalling that $D^1_t= (\nu^1_t\ast \nabla \phi_\e)(X^1_t)$,
\begin{align*}
\E[\psi_\e (X^1_T,X^2_T)]=\E[\psi_\e (X^1_0,X^2_0)]+ 2\E\Big[\int_0^T \Delta_x \psi_\e (X^1_s,X^2_s) \dd s\Big]
+ 2\chi \E\Big[\int_0^T  \nabla_x \psi_\e (X^1_s,X^2_s) \cdot D^{1}_s\dd s  \Big].
\end{align*}
Using that $\psi_\e$ takes values in $[0,1]$ and the above lowerbound of $\Delta \psi_\e$, we conclude that
$$
1\geq 2 ((4-2\gamma)^2-\eta)\E\Big[\int_0^T\int_{\R^2} |R^{1,2}_{s,s}-\sqrt\e z|^{2\gamma-2} \phi(z)\dd z\dd s \Big] - 2A_\eta T+2 \chi H_T,
$$
i.e. that 
\begin{equation}\label{tbpi}
((4-2\gamma)^2-\eta)\E\Big[\int_0^T \int_{\R^2}|R^{1,2}_{s,s}-\sqrt\e z|^{2\gamma-2} \phi(z)\dd z \dd s \Big] +\chi H_T \leq  \frac 12+A_\eta T,
\end{equation}
where
$$
H_T=\E\Big[\int_0^T  \nabla_x \psi_\e (X^1_s,X^2_s) \cdot D^{1}_s\dd s  \Big]
\geq -(4-2\gamma) \E \Big[\int_0^T \int_{\R^2}  |R^{1,2}_{s,s}-\sqrt\e z|^{3-2\gamma} \phi(z)\dd z |D^1_s| \dd s \Big].
$$
By the H\"older inequality (for $\E[\int_0^T \int_{\R^2} \;\cdot\; \phi(z)\dd z \dd s]$) with  $p=\frac{2\gamma-2}{2\gamma-3}$ and $p'=2\gamma-2$,
\begin{align*}
H_T \geq& -(4-2\gamma) \E \Big[\int_0^T \int_{\R^2} |R^{1,2}_{s,s}-\sqrt\e z|^{2-2\gamma} \phi(z)\dd z\dd s \Big]^{\frac{2\gamma-3}{2\gamma-2}}
 \E \Big[\int_0^T  |D^{1}_{s}|^{2\gamma-2} \dd s \Big]^{\frac{1}{2\gamma-2}}.
\end{align*}
By Lemmas~\ref{nest} and~\ref{lemma:LL} (which is licit since 
$C_1(\alpha,\gamma)>\chi C_2(\theta,\alpha,\gamma)$ by assumption), for $\eta>0$ to be chosen later,
\begin{gather}
\E \Big[\int_0^T  |D^{1}_{s}|^{2\gamma-2} \dd s \Big]\leq  (1+\eta) C_6(\chi,\theta,\alpha,\gamma)\E\Big[
\int_0^T \int_{\R^2}  |R^{1,2}_{s,s}-\sqrt \e z|^{2-2\gamma} \phi(z)\dd z\dd s\Big]
+A_{\sigma,\eta,T},\notag\\
\text{where} \quad C_6(\chi,\theta,\alpha,\gamma)= \frac{(C_4(\theta,\alpha,\gamma))^{2\gamma-2}\alpha^{1-\gamma}}
{C_1(\alpha,\gamma)-\chi C_2(\theta,\alpha,\gamma)}. \label{C6}
\end{gather}
Thus
\begin{align*}
&H_T\geq -(4-2\gamma) \E \Big[\int_0^T \int_{\R^2} |R^{1,2}_{s,s}-\sqrt\e z|^{2-2\gamma} \phi(z)\dd z\dd s \Big]^{\frac{2\gamma-3}{2\gamma-2}}\\
&\hskip3cm\times\Big((1+\eta) C_6(\chi,\theta,\alpha,\gamma)\E\Big[
\int_0^T \int_{\R^2} |R^{1,2}_{s,s}-\sqrt\e z|^{2-2\gamma} \phi(z)\dd z\dd s\Big]
+A_{\sigma,\eta,T}\Big)^{\frac1{2\gamma-2}}.
\end{align*}
Since $2\gamma-2>1$ and $\eta>0$, there exists $A_{\eta}>0$ such that for all $a_1,a_2,a_3\geq 0$,
$$
a_1^{\frac{2\gamma-3}{2\gamma-2}}(a_2a_1+a_3)^{\frac1{2\gamma-2}}\leq a_2^{\frac1{2\gamma-2}}a_1
+a_1^{\frac{2\gamma-3}{2\gamma-2}}a_3^{\frac1{2\gamma-2}}\leq (a_2^{\frac1{2\gamma-2}}+\eta)a_1+A_{\eta}a_3
$$
by Young's inequality. Thus
$$
H_T\geq -(4-2\gamma) \Big([(1+\eta) C_6(\chi,\theta,\alpha,\gamma)]^{\frac 1{2\gamma-2}}+\eta\Big)
\E \Big[\int_0^T \int_{\R^2} |R^{1,2}_{s,s}-\sqrt\e z|^{2-2\gamma}\phi(z)\dd z \dd s \Big] - A_{\sigma,\eta,T}.
$$
Inserting this estimate in~\eqref{tbpi}, we find, for all $t\in [0,T]$,
\begin{align}
&\Big((4-2\gamma)^2 - \eta-\chi (4-2\gamma) 
\Big([(1+\eta) C_6(\chi,\theta,\alpha,\gamma)]^{\frac 1{2\gamma-2}}+\eta\Big) \Big) \notag\\ &\hskip5cm \times
\E \Big[\int_0^T \int_{\R^2}  |R^{1,2}_{s,s}-\sqrt{\e}y|^{2-2\gamma} \phi(y) \dd y\dd s \Big] \leq A_{\sigma,\eta,T}. \label{tlo}
\end{align}
Recalling~\eqref{C6}, Lemma~\ref{nest} and \eqref{tri1p}, we have
$$
(4-2\gamma)^2 -\chi(4-2\gamma) [C_6(\chi,\theta,\alpha,\gamma)]^{\frac 1{2\gamma-2}}=
(4-2\gamma)\Big[(4-2\gamma)-\frac{\chi\sqrt{\theta} C_0\big(\frac{4\alpha}{\theta}\big) 
\kappa\big(\frac{1}{2}, \gamma-1\big)}{4\pi\sqrt{\alpha}[C_1(\alpha,\gamma) 
-\chi C_2(\theta,\alpha,\gamma)]^{\frac 1{2(\gamma-1)}}}\Big]
>0
$$
by assumption, see~\eqref{condistar}. Thus if choosing $\eta>0$ small enough,
$$
\Big((4-2\gamma)^2 - \eta-(4-2\gamma) \Big([(1+\eta) C_6(\chi,\theta,\alpha,\gamma)]^{\frac 1{2\gamma-2}}+\eta\Big) 
\Big)>0.
$$
Thus~\eqref{eq:Prop21-1} follows from~\eqref{tlo}.
\vip
By Lemma~\ref{lemma:LL}, \eqref{eq:Prop21-1} implies that 
$$
E\Big[\int_0^T \int_0^t \int_{\R^2}(t-s+\alpha|R^{1,2}_{t,s}-\sqrt\e z|^2)^{-\gamma} \phi(z)\dd z\dd s\dd t\Big]\leq A_{\chi,\sigma,\zeta,T}.
$$
From this last estimate, we deduce~\eqref{eq:Prop21-2} (directly), \eqref{eq:Prop21-3} 
(through Lemma~\ref{nest}, recall that $D^1_t=(\nu_t^1\ast\nabla \phi_\e)(X^1_t)$) 
and~\eqref{eq:Prop21-5} (through~\eqref{tri1p}).
\end{proof}

Finally, we follow the main ideas of~\cite[Lemma~11]{fournier-jourdain} to prove tightness.

\begin{proof}[Proof of Theorem~\ref{main}-(i)] By Remark~\ref{chistar} and since $\chi<\chi^*_\theta$, 
we can find $\alpha>0$ and  $\gamma \in (\frac32,2)$ such that \eqref{condistar} holds true.
By Proposition~\ref{prop:mc}, for all $T>0$, there exists $A_{T}>0$ such that for all $(N,M,\e)\in \Aa_\sigma$,
\begin{equation}\label{tic}
\E\Big[\int_0^T  |D^{1,N,M,\e}_s|^{2\gamma-2}\dd s\Big] \leq A_T,\quad\text{where}\quad D^{1,N,M,\e}_s=
(\nu^{1,N,M,\e}_s\ast \nabla \phi_\e)(X^{1,N,M,\e}_s).
\end{equation}
Recall from~\eqref{troc} that $X^{1,N,M,\e}_t=X^1_0+\sqrt 2 B^1_t+ \chi H^{1,N,M,\e}_t$,
where $H^{1,N,M,\e}_t=\int_0^t D^{1,N,M,\e}_s \dd s$.
To show the tightness of $((X^{1,N,M,\e}_t)_{t\geq 0} : (N,M,\e)\in \Aa_\sigma)$ in $C(\R_+,\R^2)$, 
it suffices to show that for each $T>0$, the family $((H^{1,N,M,\e}_t)_{t\in [0,T]}: (N,M,\e)\in \Aa_\sigma)$ 
is tight in $C([0,T],\R^2)$.
\vip
By Hölder's inequality with $p=2\gamma-2$ and $p'=\frac{2\gamma-2}{2\gamma-3}$, 
we see that for all $0\leq s \leq t \leq T$, 
$$
|H^{1,N,M,\e}_t-H^{1,N,M,\e}_s|\leq 
(t-s)^{\frac{2\gamma-3}{2\gamma-2}} Z^{1,N,M,\e}_T,
\quad \text{where} \quad Z^{1,N,M,\e}_T=\Big(\int_0^T  |D^{1,N,M,\e}_u|^{2\gamma-2}\dd u\Big)^{\frac{1}{2\gamma-2}}.
$$
But $\E[Z^{1,N,M,\e}_T]\leq 1+A_T$ by~\eqref{tic} and since $2\gamma-2>1$. Thus
for $\Kk_{T,L}$ the set of continuous functions $x:[0,T]\to \R^2$ such that $x(0)=0$ and 
$|x(t)-x(s)|\leq L|t-s|^{\frac{2\gamma-3}{2\gamma-2}}$ for all $0\leq s \leq t \leq T$,
$$
\sup_{(N,M,\e)\in \Aa_\sigma}\PP((H^{1,N,M,\e}_t)_{t\in [0,T]} \notin \Kk_{T,L}) \leq 
\sup_{(N,M,\e)\in \Aa_\sigma}\PP(Z^{1,N,M,\e}_T>L)\leq \frac{1+A_T}L \to 0
$$
as $L\to \infty$. Since $\Kk_{T,L}$ is compact in $C([0,T],\R^2)$ for each $L\geq 1$, the
conclusion follows.
\end{proof}

\section{From the two-species system to the one-species system}
\label{tstos}

Following the lines of~\cite[Theorem~5]{fournier-jourdain} and~\cite[Proposition~13]{FT1}, we handle the 

\begin{proof}[Proof of Theorem~\ref{main}-(ii)]
We fix $N\geq 2$, as well as $\theta>0$, $\lambda>0$ and $\chi \in (0,\chi^*_\theta)$. 
By Remark~\ref{chistar}, we can find $\alpha>0$ and $\gamma\in (\frac32,2)$ such that~\eqref{condistar} 
holds true. We also consider some sequences $(\e_n)_{n\geq 1}$ valued in $(0,1]$ and
$(M_n)_{n\geq 1}$ valued in $(0,\infty)$ satisfying~\eqref{condie2}
for some $\sigma>0$ fixed.
For all $n$ large enough, we have $N M_n^{1+\sigma}\e_n^{2+\sigma}\geq N M_n\e_n^{3+\sigma}\geq 1$,
so that $(N,M_n,\e_n)\in \Aa_\sigma$. For each $n\geq 1$, we consider 
the $(\rho_0,c_0,N,M_n,\e_n)$-2SKS system $(X^{k,n}_t)_{t\geq 0, k\in I_N}$.

\vip

{\it Step 1.} By (i), the family
$((X^{1,n}_t)_{t\geq 0}, n \geq 1)$ is tight in $C(\R_+,\R^2)$, so that by exchangeability, the family
$((X^{i,n}_t)_{t\geq 0, i\in I_N}, n \geq 1)$ is tight in $C(\R_+,(\R^2)^N)$. This of course implies that
$((X^{i,n}_t,B^i_t)_{t\geq 0, i\in I_N}, n \geq 1)$ is tight in $C(\R_+,(\R^2\times \R^2)^N)$ and we consider
a (not relabeled) subsequence that $(X^{i,n}_t,B^i_t)_{t\geq 0, i\in I_N}$ converges in law
in $C(\R_+,(\R^2\times \R^2)^N)$ as $n\to \infty$.
\vip
By the Skorokhod representation theorem, we can find, for each $n\geq 1$, 
a $(\rho_0,c_0,N,M_n,\e_n)$-2SKS system $(\tX^{i,n}_t)_{t\geq 0, i \in I_N}$ associated to some Brownian
motions $(\tB^{i,n}_t)_{t\geq 0, i \in I_N}$, in such a way that 
$(\tX^{i,n}_t,\tB^{i,n}_t)_{t\geq 0, i\in I_N}$ a.s. goes to some
$(\tX^{i}_t,\tB^{i}_t)_ {t\geq 0,i\in I_N}$, as $n\to \infty$, in $C(\R_+,(\R^2\times\R^2)^{N})$.
Of course, $(\tX^{i}_t)_{t\geq 0,i \in I_N}$ inherits the exchangeability property.
It remains to prove that  $(\tX^{i,N}_t)_{t\geq 0,i\in I_N}$ is a $(\rho_0,c_0,N)$-1SKS system,
recall Definition~\ref{def:PS}.
\vip
We let $\Ff_t=\sigma((\tX_s^{i},\tB^{i}_s)_{s\in [0,t], i\in I_N})$, to which $(\tX^{i}_t)_{t\geq 0, i \in I_N}$ 
is adapted. One proves as in~\cite[Theorem~5]{fournier-jourdain} that
$(\tB^{i}_t)_{t\geq 0, i\in I_N}$ is a $2N$-dimensional $(\Ff_t)_{t\geq 0}$-Brownian motion.

\vip

Let us show that $(\tX^{i}_t)_{t\geq 0, i \in I_N}$ satisfies~\eqref{PScond}. 
By exchangeability, it suffices to study $(i,j)=(1,2)$. 
By the Fatou lemma (and since $\nabla K^{\theta,\lambda}_r$ is continuous on $\R^2$ for all $r>0$), we see that for all $T>0$,
\begin{align*}
\E\Big[\int_0^T \int_0^s |\nabla K^{\theta,\lambda}_{s-u} (\tX^{1}_s-&\tX^{2}_u)|^{\frac{2\gamma}3}  \dd u \dd s\Big]
=\E\Big[\int_0^T \int_0^s \int_{\R^2}|\nabla K^{\theta,\lambda}_{s-u} (\tX^{1}_s-\tX^{2}_u)|^{\frac{2\gamma}3} \phi(y)\dd y \dd u \dd s\Big]\\
\leq & \liminf_N \E\Big[\int_0^T \int_0^s \int_{\R^2}|\nabla K^{\theta,\lambda}_{s-u} 
(\tX^{1,n}_s-\tX^{2,n}_u-\sqrt{\e_n} y)|^{\frac{2\gamma}3} \phi(y)\dd y \dd u \dd s\Big],
\end{align*}
which is finite by~\eqref{eq:Prop21-5}.
Since $\frac{2\gamma}3>1$, the conclusion follows.

\vip

We finally verify $(\tX^{k,N}_t)_{t\geq 0,k\in I_N}$  satisfies~\eqref{PS} with the Brownian motions $(\tB^{k,N}_t)_{t\geq 0,k\in I_N}$ , i.e. that for each $k\in I_N$, each $t\geq 0$,
$$
\tX^{k}_t=\tX^{k}_0 + \sqrt 2 \tB^{k}_t + \chi V^{k}_t+ \frac \chi {N-1} \sum_{i \in I_N,i\neq k} Z^{k,i}_t,
$$
where
$$
V^{k}_t=\int_0^t \nabla b^{c_0,\theta,\lambda}_s(\tX^{k}_s)\dd s
\quad \hbox{and}\quad Z^{k,i}_t=
\int_0^t \int_0^s \nabla K_{s-u}^{\theta,\lambda}(\tX^{k}_s-\tX^{i}_u) \dd u \dd s.
$$
We start from~\eqref{troc}, i.e.
\begin{equation}\label{tpttli}
\tX^{k,n}_t=\tX^{k}_0 + \sqrt 2 \tB^{k,n}_t 
+ \chi \int_0^t (\tilde \nu^{k,n}_s \ast \nabla \phi_{\e_{n}})(\tX^{k,n}_s)\dd s,
\end{equation}
where we write $\tilde \nu^{k,n}_s$ for the chemoattractant empirical concentration corresponding to the 
2SKS system $(\tX^{k,n}_t)_{t\geq 0, k \in I_N}$. By Proposition~\ref{prel} (and exchangeability),
it holds that
$$
\int_0^t(\tilde \nu^{k,n}_s \ast \nabla \phi_{\e_{n}})(\tX^{k,n}_s)\dd s= \int_0^t U^{k,n}_s \dd s+V^{k,n}_t+
\frac1{N-1} \sum_{i \in I_N,i\neq k} Z^{k,i,n}_t,
$$
where $U^{k,n}_t$ is as in Proposition~\ref{prel} and
$$
V^{k,n}_t=\int_0^t(\nabla \phi_{\e_{n}}\ast b^{c_0,\theta,\lambda}_s)(\tX^{k,n}_s)\dd s
\quad \text{and}\quad  Z^{k,i,n}_t=\int_0^t \int_0^s(\nabla \phi_{\e_{n}}\ast K^{\theta,\lambda}_{s-u})(\tX^{k,n}_s
-\tX^{i,n}_u)\dd u \dd s.
$$
We now take the limit $n\to \infty$ in probability in~\eqref{tpttli}.

\vip
By construction, 
$(\tX^{k,n}_t,\tX^{k,n}_0,\tB^{k,n}_t)$ a.s. tends to $(\tX^{k}_t,\tX^{k}_0,\tB^k_t)$.

\vip
By Proposition~\ref{prel}, 
$$
\E[\Big[\Big|\int_0^t U^{k,n}_s \dd s\Big|\Big] \leq T\sup_{t\in [0,T]}\E[|U^{k,n}_t|^2]^{\frac12}
\leq T\Big(\frac{A_{\sigma,T}}{(NM_n \e_n^{1/2})^{2}}
+\frac{A_{\sigma,T}}{NM_n \e_n^{3+\sigma}}+\frac{A_{\sigma,T}}{NM_n^{1+\sigma}\e_n^{2+\sigma}}\Big)^{\frac12}
$$
which tends to $0$ as $n\to \infty$ by~\eqref{condie2}. Indeed, $\lim_{n}M_n \e_n^{3+\sigma}=\infty$
and $M_n \e_n^{1/2}\geq M_n\e_n^{3+\sigma}$ and $M_n^{1+\sigma}\e_n^{2+\sigma}\geq M_n\e_n^{3+\sigma}$ for all
$n$ large enough such that $M_n\geq 1$. Thus $\int_0^t U^{k,n}_s \dd s\to 0$ in probability.

\vip

Next, $V^{k,n}_t$ a.s. tends to $V^{k}_t$ by dominated convergence, because:
\vip
\noindent $\bullet$ by~\eqref{rmu}, $||\nabla \phi_{\e_n}\ast b^{c_0,\theta,\lambda}_s||_{L^\infty(\R^2)}\leq A s^{-1/2}$, 
which is time-integrable;
\vip
\noindent $\bullet$ for all $s>0$,  $(\nabla \phi_{\e_{n}}\ast b^{c_0,\theta,\lambda}_{s})(\tX^{i,n}_s)$ a.s. tends to 
$\nabla b^{c_0,\theta,\lambda}_{s}(\tX^{i}_s)$ by~\eqref{rtt1}.

\vip
Finally, 
\begin{align*}
Z_t^{k,i,n}=&\int_0^t \int_0^s \int_{\R^2}\nabla K^{\theta,\lambda}_{s-u}(\tX^{k,n}_s-\tX^{i,n}_u-\sqrt{\e_n}y)\phi(y)\dd y \dd u \dd s\\
&\to  \int_0^t \int_0^s \int_{\R^2}\nabla K^{\theta,\lambda}_{s-u}(\tX^{k}_s-\tX^{i}_u)\phi(y)\dd y \dd u \dd s =Z_t^{k,i}
\end{align*}
in $L^1(\Omega)$, because (recall that $x\to \nabla K^{\theta,\lambda}_{r} (x)$ is smooth for $r>0$ fixed)
\vip
\noindent $\bullet$ the family $(\nabla K^{\theta,\lambda}_{s-u}(\tX^{k,n}_s-\tX^{i,n}_u-\sqrt{\e_n}y), n\geq 1)$
is uniformly integrable with respect to the measure 
$\E[\int_0^t \int_0^s\int_{\R^2} \;\cdot\; \phi(y)\dd y\dd u \dd s]$ by \eqref{eq:Prop21-5} and since $\frac{2\gamma}{3}>1$;
\vip
\noindent $\bullet$ for all $s>u>0$, all $y\in \R^2$,  $\nabla K^{\theta,\lambda}_{s-u}(\tX^{k,n}_s-\tX^{i,n}_u - \sqrt{\e_n}y)$
a.s. tends to $\nabla K^{\theta,\lambda}_{s-u}(\tX^{k}_s-\tX^{i}_u)$.
\end{proof}

\section{From the two-species system to the nonlinear model}
\label{tstks}

Following the proofs of~\cite[Theorem~6]{fournier-jourdain} and~\cite[Theorem 14]{FT1}, we
finally give the

\begin{proof}[Proof of Theorem~\ref{main}-(iii)]
We use the shortened notation $C=C(\R_+,\R^2)$. 
We fix $\theta>0$, $\lambda>0$ and $\chi \in (0,\chi^*_\theta)$. By Remark~\ref{chistar},
there are $\alpha>0$ and $\gamma \in (\frac32,2)$ such that~\eqref{condistar} holds true. We consider
some sequences $(\e_N)_{N\geq 2}$ and $(M_N)_{N\geq 2}$ satisfying~\eqref{condie3}.
For each $N\geq 2$, we consider the $(\rho_0,c_0,N,M_N,\e_{N})$-2SKS system $(X^{k,N}_t)_{t\geq 0, k\in I_N}$
and  $\mu^N=\frac1N\sum_{k\in I_N}\delta_{(X^{k,N}_t)_{t\geq 0}}$.

\vip
{\it Step 1.} By~\eqref{condie3}, for all $N$ large enough, $(N,M_N,\e_N)\in \Aa_\sigma$. Thus by Point i),
the family $((X^{1,N}_t)_{t\geq 0} : N\geq 2)$ is tight in $C(\R_+,\R^2)$. Together with
exchangeability, this classically implies the tightness of $(\mu^{N}, N\geq 2)$ in $\Pp(C)$,
see Sznitman~\cite[Proposition~2.2]{Sznitman}.
We now consider a (not relabeled) subsequence of $\mu^N$ converging in law to some (possibly random)
$\mu\in \Pp(C)$ as $N\to \infty$. We have to show that $\mu$ a.s. solves $\mathcal{(MP)}(\rho_0,c_0)$.
\vip

We denote by $(\mu_t)_{t\geq 0}$ the family of time-marginals of $\mu$.
Since $\mu^N_0=\frac1N\sum_{k\in I_{N_n}}\delta_{X^i_0}$ goes to $\rho_0$ because the family $(X^i_0)_{i \geq 1}$
is i.i.d. with common law $\rho_0$, we have $\mu_0=\rho_0$ a.s.

\vip

Since $\mu^N$ converges
in law to $\mu$, it also holds true that $\mu^N \otimes \mu^N$
and $\mu^N \odot \mu^N$ both converge in law to $\mu\otimes \mu$
in $\Pp(C\times C)$, where
$$
\mu^N \odot \mu^N=
\frac 1 {N(N-1)}\sum_{i,j \in I_{N},i\neq j} \delta_{((X^{i,N}_t)_{t\geq 0},(X^{j,N}_t)_{t\geq 0})}. 
$$

{\it Step 2.} Here we show that $\mu$ a.s. satisfies \eqref{eq:ConditionMP}.
By the Fatou lemma, for all $t\geq 0$,
\begin{align*}
\E\Big[\int_0^t& \int_0^s \int_{\R^2} \int_{\R^2}  
\frac {\mu_u(\dd y)\mu_s(\dd x) \dd u \dd s}{({s-u}+|x-y|^2)^\gamma}\Big]
= \E\Big[\int_C\int_C\int_0^t \int_0^s  \frac { \dd u \dd s}{({s-u}+|x_s-y_u|^2)^\gamma}(\mu\otimes \mu)
(\dd x,\dd y)\Big]\\
=&\E\Big[\int_C\int_C\int_0^t \int_0^s \int_{\R^2} \frac {\phi(z)\dd z \dd u \dd s}{({s-u}+|x_s-y_u|^2)^\gamma}(\mu\otimes \mu)
(\dd x,\dd y)\Big] \\
\leq & \liminf_N \E\Big[\int_C\int_C\int_0^t \int_0^s \int_{\R^2}\frac { \dd u  \dd s}{({s-u}+|x_s-y_u-\sqrt{\e_N}z|^2)^\gamma} \phi(z)\dd z
(\mu^N \odot \mu^N) (\dd x,\dd y) \Big]\\
=& \liminf_N \frac 1{N(N-1)}\sum_{i,j \in I_N,i\neq j}\E\Big[\int_0^t \int_0^s \int_{\R^2}\frac {\phi(z)\dd z\dd u  \dd s}
{({s-u}+|X^{i,N}_s-X^{j,N}_u-\sqrt{\e_N}z|^2)^\gamma}\Big]<\infty
\end{align*}
by~\eqref{eq:Prop21-2} and exchangeability. By~\eqref{tri0}-\eqref{tri1p}, $|K^{\theta,\lambda}_s(x)| \leq \frac{A}{s+|x|^2}$ and $|\nabla K^{\theta,\lambda}_s(x)| \leq \frac{A}{(s+|x|^2)^{3/2}}$. Hence
\begin{equation}\label{tbrr}
\E\Big[\int_0^t \int_0^s \int_{\R^2} \int_{\R^2} \Big([K^{\theta,\lambda}_{s-u}(x-y)]^{\gamma} +|\nabla K^{\theta,\lambda}_{s-u}(x-y)|^{\frac{2\gamma}3}\Big) \mu_u(\dd y)\mu_s(\dd x) \dd u \dd s  \Big]<\infty.
\end{equation}
Since $\gamma>\frac 32$, \eqref{eq:ConditionMP} follows.

\vip

{\it Step 3.} We finally check that a.s., for any $\varphi\in C^2_c(\R^2)$, the process $(M_t^\varphi)_{t\geq 0}$ 
defined in~\eqref{def_mart} (with $\Q_r$ replaced by $\mu_r$) is a $\mu$-martingale. It suffices that for all $t>s>0$, 
all continuous bounded function $\Phi:C\to \R$, we have $\Theta(\mu\otimes\mu)=0$ a.s., where
for $\Pi \in \Pp(C\times C)$,
\begin{align*}
\Theta(\Pi)=\int_{C\times C}\!\!\!\!\!
\Phi((x_r)_{r\in [0,s]})
\Big(\varphi(x_t)-\varphi(x_s)-&
\int_s^t \Big[ \Delta \varphi(x_u) + \chi \nabla \varphi(x_u)\cdot \nabla b^{c_0,\theta,\lambda}_s(x_u)\\
&+\chi \nabla \varphi(x_u)\cdot  \int_0^u \nabla K^{\theta,\lambda}_{u-v}(x_u-y_v) \dd v \Big]\dd u \Big) 
\Pi(\dd x,\dd y).
\end{align*}
To this end, we will prove that $\E[|\Theta(\mu\otimes\mu)|]=0$.
\vip

We introduce $\Theta_\e$ defined as $\Theta$ replacing $\nabla K^{\theta,\lambda}_{u-v}$
by $\nabla(\phi_\e \ast K^{\theta,\lambda}_{u-v})$ and first verify that
\begin{equation}\label{scc1}
\lim_N \E[|\Theta_{\e_N}(\mu^N\odot \mu^N)|]=0.
\end{equation}
We have
\begin{align*}
&\Theta_{\e_N}(\mu^N\odot \mu^N)\\
=&\frac1{N}\sum_{i\in I_{N}}
\Phi((X^{i,N}_r)_{r\in [0,s]}) 
\Big(\varphi(X^{i,N}_t)-\varphi(X^{i,N}_s) -\int_s^t \Big[ \Delta \varphi(X^{i,N}_u)+\chi \nabla \varphi(X^{i,N}_u)\cdot \nabla b^{c_0,\theta,\lambda}_s(X^{i,N}_u)\\
& \hskip4cm
+ \frac{\chi}{N-1}\sum_{j\in I_{N},j\neq i}\nabla \varphi(X^{i,N}_u)\cdot 
\int_0^u \nabla (\phi_{\e_N}\ast K^{\theta,\lambda}_{u-v})(X^{i,N}_u-X^{j,N}_v) \dd v \Big]\dd u \Big)\\
=&\frac 1{N}\sum_{i\in I_{N}}
\Phi((X^{i,N}_r)_{r\in [0,s]})(O^{i,N}_t-O^{i,N}_s+V^{i,N}_t-V^{i,N}_s),
\end{align*}
where, with $(\nu^{i,N}_t)_{t\geq 0,i \in I_N}$ the empirical chemoattractant concentrations corresponding to
2SKS system $(X^{i,N}_t)_{t\geq 0, i\in I_N}$,
\begin{align*}
O^{i,N}_t=& \varphi(X^{i,N}_t) - \int_0^t \Delta \varphi(X^{i,N}_u)\dd u
-\chi \int_0^t \nabla\varphi(X^{i,N}_u) \cdot (\nabla \phi_{\e_N}\ast \nu^{i,N}_u)(X^{i,N}_u) \dd u,\\
V^{i,N}_t=& \chi \int_0^t \nabla\varphi(X^{i,N}_u) \cdot Z^{i,N}_u \dd u,\\
Z^{i,N}_u=&  (\nabla \phi_{\e_n}\ast \nu^{i,N}_u)(X^{i,N}_u)-\nabla b^{c_0,\theta,\lambda}_u(X^{i,N}_u)
- \frac1{N-1}\sum_{j\in I_{N},j\neq i}\int_0^u \nabla(\phi_{\e_N}\ast K^{\theta,\lambda}_{u-v})(X^{i,N}_u-X^{j,N}_v) \dd v.
\end{align*}
Thus by exchangeability, for some constant $A$ (depending on $\Phi$ and $\varphi$),
\begin{equation}\label{ar1}
\E[|\Theta_{\e_N}(\mu^N\odot \mu^N)|]\leq \E\Big[ \Big(\frac 1{N}\sum_{i\in I_{N}}
\Phi((X^{i,N}_r)_{r\in [0,s]})(O^{i,N}_t-O^{i,N}_s)\Big)^2 \Big]^{\frac12} + A\int_0^t\E[|Z^{1,N}_u|]\dd u.
\end{equation}
The Itô formula, starting from the first line of~\eqref{troc}, tells us that
\begin{align*}
O^{i,N}_t= \varphi(X^{i}_0) + \sqrt 2 \int_0^t \nabla \varphi(X^{i,N}_u) \cdot \dd B^i_u.
\end{align*}
Since $\Phi$ and $\nabla \varphi$ are bounded and since $B^1,\dots,B^N$ are independent, 
one easily checks that
\begin{equation}\label{ar2}
\E\Big[ \Big(\frac 1{N}\sum_{i\in I_{N}}
\Phi((X^{i,N}_r)_{r\in [0,s]})(O^{i,N}_t-O^{i,N}_s)\Big)^2 \Big] \leq \frac{A}N.
\end{equation}
Next, we write $Z^{1,N}_u=U^{1,N}_u+R^{1,N}_u$, where
\begin{align*}
U^{1,N}_u=&(\nabla \phi_{\e_N}\ast \nu^{1,N}_u)(X^{1,N}_u)\!-\! \nabla (\phi_{\e_N}\ast b^{c_0,\theta,\lambda}_u)(X^{1,N}_u)\\
&\!-\!\frac1{N-1}\sum_{j\in I_{N},j\neq 1}\!\int_0^u\! \nabla (\phi_{\e_N}\ast K^{\theta,\lambda}_{u-v})(X^{1,N}_u-X^{j,N}_v) \dd v,\\
R^{1,N}_u=&\nabla (\phi_{\e_N}\ast b^{c_0,\theta,\lambda}_u)(X^{1,N}_u)-\nabla b^{c_0,\theta,\lambda}_u(X^{1,N}_u).
\end{align*}
Applying Proposition~\ref{prel}, we get
\begin{align}\label{ar3}
\int_0^t\E[|U^{1,N}_u|]\dd u \!\leq\! t \Big(\!\sup_{u\in [0,t]}\E[|U^{1,N}_u|^2]\Big)^{\frac12}
\!\leq\! A_t \Big(\frac{1}{(NM_N\e_N^{1/2})^2}\!+\!\frac{1}{NM_N\e_N^{3+\sigma}}\!+\!\frac{1}{NM_N^{1+\sigma}\e_N^{2+\sigma}}
\Big)^{\frac12},
\end{align}
which tends to $0$ as $N\to \infty$ by~\eqref{condie3} (and since $NM_N\e_N^{1/2}\geq NM_N\e_N^{3+\sigma}$).
Finally, $\E[\int_0^t |R^{1,N}_u| \dd u] \to 0$ as $N\to \infty$ by~\eqref{rtt2}. This, together with~\eqref{ar1}, \eqref{ar2} and \eqref{ar3} implies~\eqref{scc1}.

\vip

Next we introduce, for $\eta\in (0,1]$, 
$\Theta'_\eta$ defined as $\Theta$ with $\nabla K^{\theta,\lambda}_s(x)$ replaced by the continuous and 
bounded kernel $\nabla K^{\theta,\lambda,\eta}_s(x)=
\frac{s^2}{(s+\eta)^2}\nabla K^{\theta,\lambda}_s(x)$, recall~\eqref{gkb}. 
As in~\cite[proof of Theorem~14, Step~2.2]{FT1},
$\Pi \mapsto \Theta'_\eta(\Pi)$ is continuous and bounded from $\Pp(C\times C)$ to $\R$. 
Since $\mu^N\odot\mu^N$ goes in law to $\mu \otimes \mu$,
\begin{equation}\label{scc2}
\E[|\Theta'_\eta(\mu\otimes \mu)|]=\lim_{N} \E[|\Theta'_\eta(\mu^N\odot\mu^N)|].
\end{equation}

As in~\cite[proof of Theorem~14, Step~2.3]{FT1}, \eqref{tbrr} implies that
\begin{equation}\label{pld}
\lim_{\eta\to 0} \E[|\Theta(\mu\otimes \mu)-\Theta'_\eta(\mu\otimes \mu)|]=0.
\end{equation}

We finally verify that 
\begin{equation}\label{pld2}
\lim_{\eta\to 0} \limsup_{N} \E[|\Theta_{\e_N}(\mu^N\odot \mu^N)-\Theta'_\eta(\mu^N\odot \mu^N)|]=0.
\end{equation}
We write, using exchangeability,
\begin{align*}
\Delta_{\eta,N}=& \E[|\Theta_{\e_N}(\mu^N\odot \mu^N)-\Theta'_\eta(\mu^N\odot \mu^N)|]\\
\leq & C \E\Big[\int_0^t \int_0^u |\nabla(\phi_{\e_N}\ast K^{\theta,\lambda}_{u-v})(X^{1,N}_u-X^{2,N}_v) - \nabla K^{\theta,\lambda,\eta}_{u-v}(X^{1,N}_u-X^{2,N}_v)| \dd v\dd u  \Big]\\
\leq & C\Delta^1_{\eta,N}+C\Delta^2_{\eta,N},
\end{align*}
where, using that $\nabla(\phi_{\e_N}\ast K^{\theta,\lambda,\eta}_{u-v}(x))= \frac{(u-v)^2}{(u-v+\eta)^2}\nabla(\phi_{\e_N}\ast K^{\theta,\lambda}_{u-v}(x))$,
\begin{align*}
\Delta^1_{\eta,N}=&\E\Big[\int_0^t \int_0^u \Big(1-\frac{(u-v)^2}{(u-v+\eta)^2}\Big)|\nabla(\phi_{\e_N}\ast K^{\theta,\lambda}_{u-v})(X^{1,N}_u-X^{2,N}_v)|\dd v \dd u  \Big], \\
\Delta^2_{\eta,N}=&\E\Big[\int_0^t \int_0^u 
|\nabla(\phi_{\e_N}\ast K^{\theta,\lambda,\eta}_{u-v})(X^{1,N}_u-X^{2,N}_v) - \nabla K^{\theta,\lambda,\eta}_{u-v}(X^{1,N}_u-X^{2,N}_v)| \dd v \dd u\Big].
\end{align*}
Fix $\eta \in (0,1]$. Since $\nabla K^{\theta,\lambda,\eta}_{u-v}$ and $\nabla (\phi_{\e_N}\ast K^{\theta,\lambda,\eta}_{u-v})$ are bounded (uniformly in $v>u>0$ and $N\geq 2$)
and since $\lim_N||\nabla (\phi_{\e_N}\ast K^{\theta,\lambda,\eta}_{u-v})- \nabla K^{\theta,\lambda,\eta}_{u-v} ||_\infty =0$ for each $v>u>0$ (because $\nabla K^{\theta,\lambda,\eta}_{u-v}$
is uniformly continuous on $\R^2$), we get $\limsup_N \Delta^2_{\eta,N}=0$ by dominated convergence. Next,
\begin{align*}
\Delta^1_{\eta,N} \leq & \E\Big[\int_0^t \int_0^u \int_{\R^2}\Big(1-\frac{(u-v)^2}{(u-v+\eta)^2}\Big)|\nabla K^{\theta,\lambda}_{u-v}(X^{1,N}_u-X^{2,N}_v-\sqrt{\e_N}y)|\phi(y)\dd y\dd v \dd u  \Big]\\
\leq &  \rho_\eta \E\Big[\int_0^t \int_0^u \int_{\R^2} |\nabla K^{\theta,\lambda}_{u-v}(X^{1,N}_u-X^{2,N}_v-\sqrt{\e_N}y)|^{\frac{2\gamma}3}\phi(y)\dd y\dd v \dd u  \Big]^{\frac3{2\gamma}}
\end{align*}
by Hölder's inequality, where $\rho_\eta=[\int_0^t \int_0^u (1-\frac{(u-v)^2}{(u-v+\eta)^2})^{\frac{2\gamma}{2\gamma-3}}
\dd v \dd u]^{\frac{2\gamma-3}{2\gamma}}$.
Using~\eqref{eq:Prop21-5}, we conclude that $\sup_N \Delta^1_{\eta,N} \leq C \rho_\eta$, which tends to $0$ as $\eta\to 0$ by dominated convergence. We have shown~\eqref{pld2}.

\vip
To complete the proof, we notice that for all $N\geq2$, all $\eta \in(0,1]$,
\begin{align*}
\E[|\Theta(\mu\otimes\mu)|] \leq& \E[|\Theta(\mu\otimes\mu) -\Theta'_\eta(\mu\otimes\mu)|]\\  
&+\big|\E[|\Theta'_\eta(\mu\otimes\mu)|] - \E[|\Theta'_\eta(\mu^N\odot\mu^N)]\big|\\
&+\E[|\Theta'_\eta(\mu^N\odot\mu^N)-\Theta_{\e_N}(\mu^N\odot\mu^N)|]\\
&+\E[|\Theta_{\e_N}(\mu^N\odot\mu^N)|].
\end{align*}
Taking the limsup as $N\to \infty$ and then the limit as $\eta\to 0$, we conclude that $\E[|\Theta(\mu\otimes \mu)|]=0$ by~\eqref{scc1}-\eqref{scc2}-\eqref{pld}-\eqref{pld2}.
\end{proof}

\appendix

\section{Technical lemmas}\label{apa}

We first study the function $\varphi^\e_{t,x}$ used in Proposition~\ref{prel}.
Recall that $g_t$ and  $K^{\theta,\lambda}_t$ were defined in~\eqref{gkb}, while $\phi_\e$ was 
introduced in Assumption~\ref{mol}.

\begin{lemma}\label{tri} 
For $\e>0$, $x\in \R^2$ and $t>0$ fixed and for
$s\in [0,t]$ and $y\in \R^2$, set 
$$
\varphi^\e_{t,x}(s,y)=\theta(\nabla \phi_\e \ast K^{\theta,\lambda}_{t-s})(x-y)\in\R^2.
$$
It holds that (these are equalities in $\R^2$)
\begin{equation}\label{tri2}
\left\{\begin{array}{ll}
\partial_s \varphi^\e_{t,x}(s,y)+\frac1\theta(\Delta_y \varphi^\e_{t,x}(s,y) -\lambda 
\varphi^\e_{t,x}(s,y))=0, & s\in [0,t],y \in \R^2,\\[3pt]
\varphi^\e_{t,x}(t,y)=\nabla \phi_\e(x-y),&y \in \R^2.
\end{array}\right.
\end{equation}
For all $r\geq 2$, there exists $A_{r}$ (also depending on $\theta$)
such that for all $t\geq s \geq 0$, all $y\in \R^2$,
\begin{gather}
\int_{\R^2} |\varphi^\e_{t,x}(s,y)|^r \dd x \leq \frac{A_r}{(\e+t-s)^{\frac{3r}2-1}}
\quad \text{and} \quad 
\int_{\R^2} |\nabla_x\varphi^\e_{t,x}(s,y)|^r \dd x \leq \frac{A_r}{(\e+t-s)^{2r-1}},\label{estp0}\\
\int_0^t \int_{\R^2} |\varphi^\e_{t,x}(s,y)|^r \dd x \dd s \leq \frac{A_r}{\e^{\frac{3r}2-2}}
\quad \text{and} \quad 
\int_0^t \int_{\R^2} |\nabla_x\varphi^\e_{t,x}(s,y)|^r \dd x \dd s \leq \frac{A_r}{\e^{2r-2}},\label{estp1}\\
\int_0^t \int_{\R^2} |\nabla_y\varphi^\e_{t,x}(s,y)|^r \dd x \dd s \leq \frac{A_r}{\e^{2r-2}}
\quad \text{and} \quad 
\int_0^t \int_{\R^2} |\nabla_x\nabla_y\varphi^\e_{t,x}(s,y)|^r \dd x \dd s \leq \frac{A_r}{\e^{\frac{5r}2-2}}.
\label{estp2}
\end{gather}
\end{lemma}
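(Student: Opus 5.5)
The identity \eqref{tri2} is a direct computation. The kernel $K^{\theta,\lambda}_\tau=\theta^{-1}e^{-\lambda\tau/\theta}g_{2\tau/\theta}$ solves $\theta\partial_\tau K=\Delta K-\lambda K$. Setting $\tau=t-s$, the convolution $\varphi^\e_{t,x}(s,y)=\theta(\nabla\phi_\e\ast K^{\theta,\lambda}_{t-s})(x-y)$ therefore satisfies the backward equation $\partial_s\varphi+\theta^{-1}(\Delta_y\varphi-\lambda\varphi)=0$, since $\Delta_y$ acts on the argument $x-y$ exactly as $\Delta$ does. For the terminal value we use that $\theta K^{\theta,\lambda}_\tau\to\delta_0$ as $\tau\to0$ and that $\nabla\phi_\e$ is smooth; this gives $\varphi^\e_{t,x}(t,y)=\nabla\phi_\e(x-y)$. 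The key structural remark for the estimates is that $\phi_\e\ast g_{2\tau/\theta}$ is a Gaussian-smoothed $\phi_\e$. Its effective width is of order $\sqrt{\e+\tau}$, because $\phi_\e$ has width $\sqrt\e$ by Assumption~\ref{mol}.

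For \eqref{estp0}, we note that $\varphi^\e_{t,x}(s,y)$ depends only on $x-y$, so the $\dd x$ integral equals $\theta^r\|\nabla(\phi_\e\ast g_{2\tau/\theta})\|_{L^r}^r e^{-r\lambda\tau/\theta}\theta^{-r}$. We bound this in two ways and take the better one. First, putting the derivative on $\phi_\e$ and using Young's inequality with $\|g\|_{L^1}=1$, we get $\|\nabla\phi_\e\|_{L^r}^r=\e^{-\frac r2}\e^{-r}\e\|\nabla\phi\|_{L^r}^r=A\e^{1-\frac{3r}2}$; the factor $\e$ here comes from the change of variables $x=\sqrt\e z$. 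Second, putting the derivative on the Gaussian, $\|\nabla g_{2\tau/\theta}\|_{L^r}^r\le A\tau^{1-\frac{3r}2}$ by scaling, and $\|\phi_\e\|_{L^1}=1$. Together these give $\min(\e^{1-\frac{3r}2},\tau^{1-\frac{3r}2})\le A(\e+\tau)^{1-\frac{3r}2}$. The second-derivative bound for $\nabla_x\varphi$ is identical: the exponent is $1-2r$, and we split the two derivatives either both onto $\phi_\e$ (using $D^2\phi\in L^1\cap L^\infty\subset L^r$) or both onto $g$. Here we use that $D\phi,D^2\phi,D^3\phi\in L^r$ by interpolation of $L^1$ and $L^\infty$.

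Estimate \eqref{estp1} then follows by integrating \eqref{estp0} in $s\in[0,t]$. Since $\frac{3r}2-1>1$ and $2r-1>1$ for $r\ge2$, we have $\int_0^\infty(\e+\tau)^{-p}\dd\tau=A\e^{1-p}$, which yields the exponents $\frac{3r}2-2$ and $2r-2$. For \eqref{estp2}, $\nabla_y\varphi=-\nabla_x\varphi$ because of the dependence on $x-y$ only, so the first bound coincides with the second bound of \eqref{estp1}. The mixed term $\nabla_x\nabla_y\varphi$ involves third derivatives of $\phi_\e\ast g$. The same two-way splitting gives $A(\e+\tau)^{1-\frac{5r}2}$, using $D^3\phi\in L^r$; integrating in $s$ then gives $\e^{2-\frac{5r}2}$.

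The only mildly delicate point is making the ``min of two bounds'' rigorous uniformly in $\tau\in(0,\infty)$ and $\e\in(0,1]$, including the exponential factor, which is bounded by $1$ since $\lambda>0$. The Gaussian $L^r$ norms of $D^kg_\sigma$ scale exactly as $\sigma^{-\frac k2-1+\frac1r}$ (raised to the power $r$). So the main work is bookkeeping of the scaling exponents, together with checking that the constants depend only on $r$, $\theta$ and $\phi$.
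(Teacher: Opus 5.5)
Your proposal is correct and follows essentially the same route as the paper. The identity \eqref{tri2} comes from $\theta\partial_\tau K^{\theta,\lambda}_\tau=\Delta K^{\theta,\lambda}_\tau-\lambda K^{\theta,\lambda}_\tau$ together with $\theta K^{\theta,\lambda}_0=\delta_0$, and the bounds come from Young's convolution inequality used in two ways (all derivatives on $\phi_\varepsilon$, or all on $g_{2(t-s)/\theta}$), then $\min\{\varepsilon^{-q},\tau^{-q}\}\le A(\varepsilon+\tau)^{-q}$ and integration in $s$; your scaling exponents and your damping factor $e^{-\lambda(t-s)/\theta}$ are right (the paper's proof writes $e^{-\lambda t/\theta}$, a harmless slip).
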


\begin{proof}
First, \eqref{tri2} follows from the facts that 
$\partial_t K^{\theta,\lambda}_t(x)=\frac1\theta(\Delta K^{\theta,\lambda}_t(x)-\lambda K^{\theta,\lambda}_t(x))$ and that
$\theta K^{\theta,\lambda}_0=g_0=\delta_0$.
Next, we have $\varphi^\e_{t,x}(s,y)=e^{-\frac \lambda\theta t} \nabla(\phi_\e\ast g_{2(t-s)/\theta})(x-y)$, so that
\begin{align*}
\int_{\R^2} |\varphi^\e_{t,x}(s,y)|^r \dd x \leq& ||\nabla(\phi_\e\ast g_{2(t-s)/\theta})||_{L^r(\R^2)}^r \\
\leq& \min\Big\{||\phi_\e||_{L^1(\R^2)}^r||\nabla g_{2(t-s)/\theta}||_{L^r(\R^r)}^r,
||\nabla \phi_\e||_{L^r(\R^2)}^r||g_{2(t-s)/\theta}||_{L^1(\R^r)}^r \Big\}
\end{align*}
by Young's convolution inequality. Easy computations show that $||\phi_\e||_{L^1(\R^2)}=
||g_{2(t-s)/\theta}||_{L^1(\R^r)}=1$, that $||\nabla g_{2(t-s)/\theta}||_{L^r(\R^r)}^r\leq A_r (t-s)^{1-\frac{3r}2}$
and that $||\nabla \phi_\e||_{L^r(\R^2)}^r \leq A_r \e^{1-\frac{3r}2}$. Hence
\begin{align*}
\int_{\R^2} |\varphi^\e_{t,x}(s,y)|^r \dd x \leq A_r \min\{(t-s)^{1-\frac{3r}2},\e^{1-\frac{3r}2}\}\leq 
A_r(\e+t-s)^{1-\frac{3r}2}.
\end{align*}
This shows the first inequality in~\eqref{estp0} which, integrated in $s\in [0,t]$, implies the first inequality 
in~\eqref{estp1}. Similarly,
\begin{align*}
\int_{\R^2} |\nabla_x\varphi^\e_{t,x}(s,y)|^r \dd x \leq& ||D^2(\phi_\e\ast g_{2(t-s)/\theta})||_{L^r(\R^2)}^r \\
\leq& \min\Big\{||\phi_\e||_{L^1(\R^2)}^r||D^2 g_{2(t-s)/\theta}||_{L^r(\R^r)}^r,
||D^2\phi_\e||_{L^r(\R^2)}^r||g_{2(t-s)/\theta}||_{L^1(\R^r)}^r \Big\}.
\end{align*}
But $||D^2 g_{2(t-s)/\theta}||_{L^r(\R^r)}^r\leq A_r (t-s)^{1-2r}$,
and $||D^2\phi_\e||_{L^r(\R^2)}^r \leq A_r \e^{1-2r}$, whence
\begin{align*}
\int_{\R^2} |\nabla_x\varphi^\e_{t,x}(s,y)|^r \dd x \leq A_r \min\{(t-s)^{1-2r},\e^{1-2r}\}\leq 
A_r(\e+t-s)^{1-2r}.
\end{align*}
This shows the second inequality in~\eqref{estp0} which, integrated in $s\in [0,t]$, implies the second inequality 
in~\eqref{estp1}. The first inequality in~\eqref{estp2} is shown similarly. Finally,
\begin{align*}
\int_{\R^2} |\nabla_x \nabla_y\varphi^\e_{t,x}(s,y)|^r \dd x \leq& ||D^3(\phi_\e\ast g_{2(t-s)/\theta})||_{L^r(\R^2)}^r \\
\leq& \min\Big\{||\phi_\e||_{L^1(\R^2)}^r||D^3 g_{2(t-s)/\theta}||_{L^r(\R^r)}^r,
||D^3\phi_\e||_{L^r(\R^2)}^r||g_{2(t-s)/\theta}||_{L^1(\R^r)}^r \Big\}.
\end{align*}
But $||D^3 g_{2(t-s)/\theta}||_{L^r(\R^r)}^r\leq A_r (t-s)^{1-\frac{5r}2}$,
and $||D^3\phi_\e||_{L^r(\R^2)}^r \leq A_r \e^{1-\frac{5r}2}$, whence
\begin{align*}
\int_{\R^2} |\nabla_x\varphi^\e_{t,x}(s,y)|^r \dd x \leq A_r \min\{(t-s)^{1-\frac{5r}2},\e^{1-\frac{5r}2}\}\leq 
A_r(\e+t-s)^{1-\frac{5r}2}
\end{align*}
which, integrated in $s\in [0,t]$, yields the last inequality in~\eqref{estp2}.
\end{proof}

We next establish some more estimates and properties related to $K^{\theta,\lambda}_t$ and $b^{c_0,\theta,\lambda}_t$, that were defined in~\eqref{gkb} and to $\phi_\e$, which 
was introduced in Assumption~\ref{mol}.

\begin{lemma} We fix $c_0 \in L^\infty(\R^2)\cap L^1(\R^2)$ and $\theta>0$, $\lambda>0$.
There exists a constant $A$ such that for all $t> 0$, all $x\in\R^2$, all $\e\in [0,1]$,
\begin{equation}\label{tri0}
|K^{\theta,\lambda}_t(x)|\leq A (t+|x|^2)^{-1} \quad \text{and} \quad 
|(\phi_\e \ast K^{\theta,\lambda}_t)(x)|\leq A \int_{\R^2} (t+|x-\sqrt \e y|^2)^{-1} \phi(y)\dd y.
\end{equation}
For any $\alpha>0$, any $t>0$, any $x\in \R^2$, any $\e\in [0,1]$,
\begin{align}
|\nabla K^{\theta,\lambda}_t(x)|\leq &
C_3(\theta,\alpha)(t+\alpha |x|^2)^{-\frac{3}{2}}, \quad \text{where} \quad C_3(\theta,\alpha)=\frac{\sqrt \theta C_0\big(\frac{4\alpha}{\theta}\big)}{4\pi}, \label{tri1p}\\
|\nabla (\phi_\e \ast K^{\theta,\lambda}_t)(x)|\leq& 
C_3(\theta,\alpha)\int_{\R^2}(t+\alpha |x-\sqrt\e y|^2)^{-\frac{3}{2}}\phi(y)\dd y.
\label{tri1}
\end{align}
There exists $A>0$ depending on $\theta$, $c_0$ and $\phi$ such that for any $t>0$, any $\e\in [0,1]$,
\begin{equation}\label{rmu}
||\nabla (\phi_{\e}\ast b^{c_0,\theta,\lambda}_t) ||_{L^\infty(\R^2)}\leq  A (\e+t)^{-\frac 1 2}.
\end{equation}
For all $t\geq 0$, it holds that
\begin{equation}\label{rtt2}
\lim_{\e\to 0}\int_0^t ||\nabla \phi_{\e}\ast b^{c_0,\theta,\lambda}_{s} - \nabla b^{c_0,\theta,\lambda}_{s}||_{L^\infty(\R^2)} 
\dd s=0.
\end{equation}
For all $s>0$, if $x_n\to x$ and $\e_n\to 0$, then
\begin{equation}\label{rtt1}
(\nabla \phi_{\e_n}\ast b^{c_0,\theta,\lambda}_{s})(x_n)\to \nabla b^{c_0,\theta,\lambda}_{s}(x).
\end{equation}
\end{lemma}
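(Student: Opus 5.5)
The plan is to handle the six claims separately, in the order stated, reducing everything to explicit Gaussian computations.

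\emph{Kernel bounds \eqref{tri0}, \eqref{tri1p}, \eqref{tri1}.} Recall that $K^{\theta,\lambda}_t(x)=\theta^{-1}e^{-\lambda t/\theta}g_{2t/\theta}(x)$, with $g_s(x)=(2\pi s)^{-1}e^{-|x|^2/(2s)}$. For \eqref{tri0}, I use $e^{-\lambda t/\theta}\le 1$. I then bound $s^{-1}e^{-|x|^2/(2s)}$ by $A(s+|x|^2)^{-1}$, distinguishing the regions $|x|^2\le s$ and $|x|^2\ge s$.

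For \eqref{tri1p}, I compute
\[
\nabla K^{\theta,\lambda}_t(x)=-\theta^{-1}e^{-\lambda t/\theta}\frac{x}{2t/\theta}\,g_{2t/\theta}(x),
\qquad\text{so}\qquad
|\nabla K^{\theta,\lambda}_t(x)|\le \frac{\theta}{8\pi t^2}\,|x|\,e^{-\theta|x|^2/(4t)}.
\]
Set $u=\theta|x|^2/(4t)$. Then $|x|=2\sqrt{tu/\theta}$ and $t+\alpha|x|^2=t(1+4\alpha u/\theta)$. Multiplying by $(t+\alpha|x|^2)^{3/2}$ gives
\[
\frac{\theta}{8\pi}\cdot\frac{2}{\sqrt\theta}\,\sqrt u\,(1+\beta u)^{3/2}e^{-u}, \qquad \beta=\frac{4\alpha}{\theta}.
\]
This is at most $\frac{\sqrt\theta}{4\pi}C_0(4\alpha/\theta)$, which is exactly $C_3(\theta,\alpha)$.

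For \eqref{tri1} and the second half of \eqref{tri0}, I write
\[
\phi_\e*K_t(x)=\int K_t(x-\sqrt\e y)\phi(y)\,\dd y,
\]
using $\phi_\e(z)\,\dd z=\phi(y)\,\dd y$ with $z=\sqrt\e y$. Then I differentiate under the integral and apply the pointwise bounds.

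\emph{Bound \eqref{rmu}.} Here $b_t=\theta K_t*c_0=e^{-\lambda t/\theta}g_{2t/\theta}*c_0$. I obtain the $t^{-1/2}$ bound via
\[
\|\nabla(\phi_\e*g_{2t/\theta}*c_0)\|_\infty\le\|\nabla g_{2t/\theta}\|_{L^1}\,\|c_0\|_\infty\le A t^{-1/2}.
\]
I obtain the $\e^{-1/2}$ bound via $\|\nabla\phi_\e\|_{L^1}\|c_0\|_\infty=\e^{-1/2}\|\nabla\phi\|_{L^1}\|c_0\|_\infty$. Taking the minimum of the two gives $A(\e+t)^{-1/2}$.

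\emph{Convergence \eqref{rtt2} and \eqref{rtt1}.} For fixed $s>0$, the function $\nabla b_s=e^{-\lambda s/\theta}(\nabla g_{2s/\theta})*c_0$ is bounded and Lipschitz, with Lipschitz constant $\|D^2g_{2s/\theta}\|_{L^1}\|c_0\|_\infty\le A s^{-1}$. Hence
\[
\|\phi_\e*\nabla b_s-\nabla b_s\|_\infty\le A s^{-1}\int|\sqrt\e y|\phi(y)\,\dd y .
\]
This needs $\phi$ to have a first moment, which Assumption~\ref{mol} does not literally give. To avoid it, I use the uniform continuity of $\nabla b_s$ together with tightness of $\phi$: split the integral over $\{|y|\le R\}$ and its complement, and use that $\nabla b_s$ is bounded. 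This shows $\|\phi_\e*\nabla b_s-\nabla b_s\|_\infty\to0$ for each $s>0$.

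Combining this with the domination $\|\cdot\|_\infty\le 2As^{-1/2}$ from \eqref{rmu} (applied at $\e$ and at $0$), which is integrable on $[0,t]$, dominated convergence gives \eqref{rtt2}. Then \eqref{rtt1} follows from the uniform convergence together with the continuity of $\nabla b_s$ at $x$.

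The only step requiring real care is the constant in \eqref{tri1p}. The algebra must reproduce exactly $\sqrt\theta C_0(4\alpha/\theta)/(4\pi)$, since that sharp constant feeds into the threshold $\chi^*_\theta$. Everything else is routine Young-inequality and dominated-convergence bookkeeping.
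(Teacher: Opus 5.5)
Your proposal is correct and follows essentially the paper's proof: the same Young-inequality minimum $A\min\{\e^{-1/2},t^{-1/2}\}$ for \eqref{rmu}, the same dominated convergence with the integrable majorant $A s^{-1/2}$ for \eqref{rtt2}, and the same triangle-inequality argument for \eqref{rtt1}. The differences are minor. You compute directly the Gaussian bounds \eqref{tri0} and \eqref{tri1p}, correctly recovering $C_3(\theta,\alpha)$, whereas the paper cites them from \cite{FT1}. For the pointwise-in-$s$ convergence in \eqref{rtt2}, you use uniform continuity of $\nabla b^{c_0,\theta,\lambda}_s$ plus tightness of $\phi$, while the paper uses the $L^1$ approximate-identity bound $\|c_0\|_{L^\infty(\R^2)}\|\phi_\e\ast\nabla g_{2s/\theta}-\nabla g_{2s/\theta}\|_{L^1(\R^2)}\to 0$; both routes rightly avoid any moment assumption on $\phi$.
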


\begin{proof} 
The first inequality in~\eqref{tri0} has been checked in~\cite[proof of Theorem~14, Step~1]{FT1}, and the second inequality in~\eqref{tri0} follows, recalling that  
$\phi_\e(y)=\e^{-1}\phi(\e^{-\frac12}y)$
and proceeding to the substitution $u=\e^{-\frac12}y$.
Next, \eqref{tri1p} has been checked in~\cite[Remark~9]{FT1},
and~\eqref{tri1} follows, using that 
$|\nabla (\phi_\e \ast K^{\theta,\lambda}_t)(x)|\leq (\phi_\e\ast |\nabla K^{\theta,\lambda}_t|)(x)$,  recalling that $\phi_\e(y)=\e^{-1}\phi(\e^{-\frac12}y)$
and proceeding to the substitution $u=\e^{-\frac12}y$.
\vip
We turn to~\eqref{rmu}. Recalling~\eqref{gkb}, we have
$\nabla (\phi_{\e}\ast b^{c_0,\theta,\lambda}_t) = e^{-\frac\lambda\theta t}
\nabla (\phi_\e \ast g_{2t/\theta}\ast c_0)$, whence
$$
||\nabla (\phi_{\e}\ast b^{c_0,\theta,\lambda}_t)||_{L^\infty(\R^2)}\leq ||\nabla (\phi_\e \ast g_{2t/\theta})||_{L^1(\R^2)}
||c_0||_{L^\infty(\R^2)} \leq A (t+\e)^{-\frac 12},
$$
because by Young's inequality,
\begin{align*}
||\nabla (\phi_\e \ast g_{2t/\theta})||_{L^1(\R^2)}\leq &
\min\Big\{||\nabla \phi_\e||_{L^1(\R^2)} ||g_{2t/\theta}||_{L^1(\R^2)},||\phi_\e||_{L^1(\R^2)} 
||\nabla g_{2t/\theta}||_{L^1(\R^2)}\Big\} \\
\leq& A \min\{\e^{-\frac12},t^{-\frac12}\},
\end{align*}
which is smaller than $A(\e+t)^{-\frac 12}$. This shows~\eqref{rmu}. 

\vip
We next verify~\eqref{rtt2}. Since 
$||\nabla \phi_{\e}\ast b^{c_0,\theta,\lambda}_s-\nabla b^{c_0,\theta,\lambda}_s ||_{L^\infty(\R^2)}\leq A s^{-1/2}$ 
by~\eqref{rmu}, it suffices, by dominated convergence, to show that for each $s>0$,
$\lim_{\e\to 0}||\nabla \phi_{\e}\ast b^{c_0,\theta,\lambda}_s-\nabla b^{c_0,\theta,\lambda}_s ||_{L^\infty(\R^2)}=0$.
But as previously,
$$
||\nabla \phi_{\e}\ast b^{c_0,\theta,\lambda}_s-\nabla b^{c_0,\theta,\lambda}_s ||_{L^\infty(\R^2)}
\leq ||c_0||_{L^\infty(\R^2)}  ||\phi_\e\ast \nabla g_{2s/\theta}-\nabla g_{2s/\theta}||_{L^1(\R^2)},
$$
which classically tends to $0$ as $\e\to 0$ because$\nabla g_{2s/\theta}\in L^1(\R^2)$.
\vip

If $s>0$, $x_n\to x$ and $\e_n\to 0$, then
$$
|(\nabla \phi_{\e_n}\ast b^{c_0,\theta,\lambda}_{s})(x_n)- \nabla b^{c_0,\theta,\lambda}_{s}(x)|\leq
||\nabla \phi_{\e_n}\ast b^{c_0,\theta,\lambda}_{s}- \nabla b^{c_0,\theta,\lambda}_{s}||_\infty+
|\nabla b^{c_0,\theta,\lambda}_{s}(x_n)-\nabla b^{c_0,\theta,\lambda}_{s}(x)|.
$$
We have seen 5 lines above that the first term on the RHS tends to $0$. The second one also tends to $0$,
since $y\mapsto \nabla b^{c_0,\theta,\lambda}_s (x)= e^{-\lambda t} (\nabla g_{2t/\theta}\ast c_0)(y)$ is continuous. 
\end{proof}

\small
\bibliography{biblio}

\end{document}